\documentclass[11pt]{article}

\usepackage{amsthm,amsmath,amssymb}
\usepackage{natbib}
\usepackage{multirow}
\usepackage[pdftex]{graphicx}
\usepackage{subfigure}
\usepackage{makecell}
\usepackage{booktabs}
\usepackage{array}
\usepackage{fullpage}
\usepackage{url}
\usepackage{algorithm}
\usepackage{algorithmic}
\usepackage{bm}
\usepackage{smile}
\usepackage{mathtools}
\usepackage{wrapfig}
\usepackage{lipsum}
\usepackage{mathrsfs}
\usepackage{dsfont}
\usepackage{titling}
\usepackage{epstopdf}

\usepackage{natbib}
\usepackage{multirow}

\usepackage[usenames,dvipsnames,svgnames,table]{xcolor}
\usepackage[colorlinks=true,
            linkcolor=blue,
            urlcolor=blue,
            citecolor=blue]{hyperref}



\newcommand*{\supp}{\mathrm{supp}}


\pdfminorversion=4

\begin{document}

\title{\huge Sparse Confidence Sets for Normal Mean Models}


\author{
  Yang Ning%
  \thanks{\linespread{1}\selectfont Department of Statistics and Data Science, Cornell University, Ithaca, NY 14853; e-mail:~\href{mailto:yn265@cornell.edu}{yn265@cornell.edu}. Ning was supported in part by NSF CAREER Award DMS-1941945 and DMS-1854637.}
  \and Guang Cheng%
  \thanks{\linespread{1}\selectfont
    Corresponding Author. Department of Statistics, Purdue University, IN 47906; e-mail:~\href{mailto:chengg@purdue.edu}{chengg@purdue.edu}.
    Guang Cheng gratefully acknowledges DMS-1811812, DMS-1821183 and Adobe Data Science Award.}}

\maketitle

\vspace{-0.1in}

\begin{abstract}
In this paper, we propose a new framework to construct confidence sets for a $d$-dimensional unknown sparse parameter $\btheta$ under the normal mean model $\bX\sim N(\btheta,\sigma^2\Ib)$. A key feature of the proposed confidence set is its capability to account for the sparsity of $\btheta$, thus named as {\em sparse} confidence set. This is in sharp contrast with the classical methods, such as Bonferroni confidence intervals and other resampling based procedures, where the sparsity of $\btheta$ is often ignored. Specifically, we require the desired sparse confidence set to satisfy the following two conditions: (i) uniformly over the parameter space, the coverage probability for $\btheta$ is above a pre-specified level; (ii) there exists a random subset $S$ of $\{1,...,d\}$ such that $S$ guarantees the pre-specified true negative rate (TNR) for detecting nonzero $\theta_j$'s. To exploit the sparsity of $\btheta$, we define that the confidence interval for $\theta_j$ degenerates to a single point 0 for any $j\notin S$.
Under this new framework, we first consider whether there exist sparse confidence sets that satisfy the above two conditions. To address this question, we establish a non-asymptotic minimax lower bound for the non-coverage probability over a suitable class of sparse confidence sets. The lower bound deciphers the role of sparsity and minimum signal-to-noise ratio (SNR) in the construction of sparse confidence sets. Furthermore, under suitable conditions on the SNR, a two-stage procedure is proposed to construct a sparse confidence set. To evaluate the optimality, the proposed sparse confidence set is shown to attain a minimax lower bound of some properly defined risk function up to a constant factor. Finally, we develop an adaptive procedure to the unknown sparsity and SNR. Numerical studies are conducted to verify the theoretical results.
\end{abstract}

\noindent {\bf Keyword}: Adaptivity; confidence interval; high-dimensional statistics; minimax optimality; sparsity; true negative rate.

\section{Introduction}

Assume that we observe a $d$-dimensional random vector $\bX=(X_1,...,X_d)$ satisfying the following normal mean model, also known as Gaussian sequence model, 
$$
\bX\sim N(\btheta,\sigma^2\Ib),
$$ 
where $\btheta=(\theta_1,...,\theta_d)$ is a $d$-dimensional unknown parameter, $\Ib$ is an identity matrix and $\sigma^2$ is the common variance which is assumed to be known. The mathematical simplicity of normal mean models is often exploited to discover fundamental phenomena underlying more complicated statistical models. In particular, the normal mean model has attracted numerous interest in high-dimensional statistics. Among others, \cite{abramovich2006adapting} proposed an adaptive procedure for estimating sparse $\btheta$ which is asymptotically minimax for $\ell_r$ loss, while \cite{butucea2018variable} derived the minimax risk for the recovery of sparsity pattern under the Hamming loss. From a complementary perspective, the detection boundary for testing the null hypothesis $\btheta=0$ has been well studied by \cite{ingster2010detection,hall2010innovated,arias2011global}, among many others. However, the uncertainty quantification in terms of confidence sets for $\btheta$ is less explored, partly because one can easily construct the following $(1-\alpha)$ level confidence sets 
\begin{equation}\label{eqci_bon}
\big\{\btheta\in\RR^d: \max_{1\leq j\leq d}|X_j-\theta_j|\leq t_{\alpha}\sigma\big\},
\end{equation}
where the cutoff $t_{\alpha}$ can be determined by the Gaussianity of $\bX$ with Bonferroni (or Sidak) correction or resampling methods \citep{arlot2010some,chernozhukov2013gaussian}. With a slightly different goal, \cite{benjamini2005false} proposed to construct confidence intervals for some randomly selected components of $\btheta$, known as  selective confidence intervals; see also \cite{weinstein2013selection,fuentes2018confidence,benjamini2019confidence,zhao2020constructing} for some recent development.   

Recently, there is a growing interest in developing confidence intervals for sparse linear regression and other regression models, for instance,  \cite{van2013asymptotically,zhang2011confidence,belloni2014uniform,javanmard2013confidence,ning2017general,cai2017confidence}, a list that is far from exhaustive. The method is often termed as debiased or desparsifying approach in the literature. Their main idea is to remove the bias of the penalized estimator, e.g., Lasso, so that the resulting estimator of the unknown regression coefficients is regular and asymptotically linear. The confidence intervals for each component of the regression parameter are obtained by Gaussian approximation. 
Intuitively, the debiased estimator can be viewed as the random vector $\bX$ in the normal mean model after use of the central limit theorem and other asymptotic approximation. As a result, one can construct confidence sets for the whole vector of regression parameter in a similar way as (\ref{eqci_bon}) by using the resampling method; see \cite{zhang2017simultaneous}. While the aforementioned works provide confidence sets with the desired coverage probability in the asymptotic setting, the construction of confidence sets does not reflect the sparsity of the parameter. For example, it is unclear whether it is possible for confidence sets as (\ref{eqci_bon}) to incorporate the information on the sparsity of $\btheta$, and if possible how to deal with it in an optimal way. 


In this paper, we propose a new framework to construct sparse confidence sets for $\btheta$ under the normal mean model. We first consider the setting that the parameter $\btheta=(\theta_1,...,\theta_d)$ belongs to a one-sided sparse set in $\RR^d$, i.e., $\btheta\in\Theta^+(s,a)$, where 
\begin{equation}\label{eqspace1}
\Theta^+(s,a)=\{\btheta\in\RR^d: \|\btheta\|_0\leq s, \min_{j: \theta_j\neq 0} \theta_j\geq a\},
\end{equation}
for some $s, a>0$.  Given $\bX\sim N(\btheta,\sigma^2\Ib)$, a sparse confidence set $M(S, \bU, \bL)$ for $\btheta$ is defined in the following form:
\begin{equation}\label{eqci}
M(S, \bU, \bL)=\{\btheta\in\RR^d: \btheta_{S^c}=0\;\mbox{and}\;\theta_j\in[L_j, U_j]\;\textrm{for any}~ j\in S\},
\end{equation}
where $S:=S(\bX)$ is a random subset of $[d]=\{1,2,...,d\}$, $S^c$ denotes the complement of $S$, and $\bL=(L_1,...,L_d)$ and $\bU=(U_1,...,U_d)$ with $L_j:=L_j(\bX)$ and $U_j:=U_j(\bX)$ being the lower and upper confidence bounds for $\theta_j$. If $j$ belongs to $S$, $[L_j, U_j]$ is the confidence interval for $\theta_j$, otherwise the confidence interval degenerates to a single point $0$. The cardinality of the random set $S$ determines the ``sparsity" level of $M(S, \bU, \bL)$. Note that by setting $S=[d]$, $M(S, \bU, \bL)$ reduces to the classical confidence intervals, such as (\ref{eqci_bon}).  On the other hand, if the support set of $\btheta$ is known, one can take $S=\supp(\btheta)$ and $M(S, \bU, \bL)$ reduces to the so called oracle confidence intervals. By exploiting the sparsity of $\btheta$, the oracle confidence interval degenerates to $0$ for those $\theta_j$ not in the support, and therefore is an example of sparse confidence sets in (\ref{eqci}). Since the support set of $\btheta$ is unknown, in regression models, \cite{fan2001variable,fan2014strong,wang2013calibrating} proposed to construct asymptotically valid oracle confidence intervals for the nonzero parameters under the assumption that the support set can be recovered with probability tending to 1. 

Formally, we require that the desired sparse confidence set (\ref{eqci}) should satisfy the following two conditions.
\begin{itemize}
\item $M(S, \bU, \bL)$ has the desired coverage probability for $\btheta$ uniformly over $\Theta^+(s,a)$, that is for a given level $0< \alpha< 1$, 
\begin{equation}\label{eqcoverage}
\sup_{\btheta\in\Theta^+(s,a)}\PP_{\btheta}(\btheta\notin M(S, \bU, \bL))\leq \alpha.
\end{equation}
This is the typical requirement for the validity of the confidence set. 
\item  $M(S, \bU, \bL)$ is ``sparse." Formally, for a given level $0< \delta< 1$ and any $j\in[d]$,
\begin{equation}\label{eqsparsity}
\sup_{\btheta\in\Theta^+(s,a), \theta_j=0}\PP_{\btheta}(j\in S(\bX))\leq 1-\delta.
\end{equation}
This condition implies that if $\theta_j=0$, then $j$ does not belong to the random set $S$ with probability at least $\delta$. Recall that for any $j\notin S$, we have $L_j=U_j=0$ by the definition of (\ref{eqci}) and naturally our estimate of $\theta_j$ is 0. For this reason, $\delta$ corresponds to the true negative rate (TNR) of the random set $S$ for detecting nonzero $\theta_j$'s. Equivalently, $1-\delta$ is the false positive rate (FPR). Thus, a larger value of $\delta$ requires the confidence set to have less false positives. 
Finally, we note that $\delta$ controls the expected cardinality of $S=S(\bX)$, where we use $|S(\bX)|$ to denote the cardinality of the set $S(\bX)$. Specifically, by (\ref{eqsparsity}) we obtain 
\begin{align*}
\sup_{\btheta\in\Theta^+(s,a)}\EE_{\btheta}|S(\bX)|&= \sup_{\btheta\in\Theta^+(s,a)} \Big[\sum_{j: \theta_j\neq 0}\PP_{\btheta}(j\in S(\bX))+ \sum_{j: \theta_j= 0}\PP_{\btheta}(j\in S(\bX))\Big]\\
& \leq s+(d-s)(1-\delta),
\end{align*}
which implies at least $s(1-\delta)$ components of intervals $[\bL, \bU]$ degenerate to $0$. 
\end{itemize}

Conceptually, it may be more intuitive to directly pre-specify the size of $S$ when constructing $M(S, \bU, \bL)$ as opposed to requiring (\ref{eqsparsity}). However, an appropriate choice of $|S|$ depends on the unknown sparsity of $\btheta$ and is often difficult to specify in practice. Therefore, we take the current approach which requires (\ref{eqsparsity}) together with (\ref{eqcoverage}).

Under this novel framework, our goal is to construct $M(S, \bU, \bL)$ such that (\ref{eqcoverage}) and (\ref{eqsparsity}) hold. 
In view of the definition of the sparse confidence set (\ref{eqci}), it is easily seen that
if there exists some $j\in [d]$ such that $j\in \supp(\btheta)$ and $j\notin S$, then  $\theta_j$ would never be covered by $M(S, \bU, \bL)$. 
Similarly, if $|S|$ is too large (e.g., $S=[d]$), there may exist too many false positives such that (\ref{eqsparsity}) is violated.
Thus, the bottleneck is how to construct a set $S$ for which  $\supp(\btheta)\subseteq S$ holds with some desired probability and (\ref{eqsparsity}) is valid. We first study the existence of such set $S$. To this end, a non-asymptotic minimax lower bound for $\PP_{\btheta}(\supp(\btheta)\not\subseteq S)$ is established in Theorem \ref{themlower} over a suitable class of random sets $S$ satisfying (\ref{eqsparsity}). More precisely, the class of the random sets is defined in (\ref{eqF}). The lower bound details the conditions on the  sparsity and minimum signal-to-noise ratio (SNR) in the construction. 
To match the lower bound, we further show in Theorem \ref{themupper} that, under appropriate conditions on the SNR, a random set $\hat S_{\alpha'}$ obtained by a simple thresholding procedure contains $\supp(\btheta)$ with probability greater than $1-\alpha'$ and satisfies (\ref{eqsparsity}), where $\alpha'$ is a pre-specified tolerance level.  

Given the set $\hat S_{\alpha'}$, we proceed to construct the lower and upper confidence bounds $\bL$ and $\bU$. Since the parameter space $\Theta^+(s,a)$ in (\ref{eqspace1}) is one-sided, we focus on the one-sided sparse confidence set with $U_j=+\infty$ for $j\in S$. 
In Section \ref{sec_construc}, we derive the lower confidence bound $\hat L_j$ for those $j\in \hat S_{\alpha'}$ using Bonferroni correction to account for the multiple comparisons and the randomness of the estimated set $\hat S_{\alpha'}$. In Theorem \ref{thm_spraseCI}, we show that the sparse confidence set constructed above satisfies the desired conditions (\ref{eqcoverage}) and (\ref{eqsparsity}). We note that our two-stage procedure for constructing sparse confidence set is similar to that for selective confidence intervals \citep{benjamini2005false}. We refer to Remark \ref{sec_compare} for further discussion.

Theorems \ref{themlower}, \ref{themupper} and \ref{thm_spraseCI} together characterize the role of the minimum SNR, defined as $a/\sigma$, in the construction of sparse confidence sets. In particular, in the asymptotic regime $d,s\rightarrow\infty$, a phase transition phenomenon occurs when the SNR reaches the level $ \Phi^{-1}(\delta)+\sqrt{2\log s}$, where $\Phi^{-1}(\cdot)$ is the inverse function of the Gaussian c.d.f. $\Phi(\cdot)$. To be specific, if $a/\sigma\leq \Phi^{-1}(\delta)+(1-\epsilon)\sqrt{2\log s}$ for an arbitrarily small positive constant $\epsilon$, it is impossible to construct sparse confidence sets. On the other hand, if $a/\sigma\geq \Phi^{-1}(\delta)+\sqrt{2\log s}$, the proposed sparse confidence set satisfies the conditions (\ref{eqcoverage}) and (\ref{eqsparsity}).


When the conditions on the SNR are fulfilled, there often exist infinite number of sparse confidence sets of form (\ref{eqci}) that meet (\ref{eqcoverage}) and (\ref{eqsparsity}). In Section \ref{sec_opt}, we further evaluate the optimality of the proposed sparse confidence set. For the one-sided interval $M(S, \bU, \bL)$, we formally define the following optimality criterion function 
\begin{equation}\label{eqR}
R(M(S, \bU, \bL), \Theta^+(s,a)):=\sup_{1\leq j\leq d}\sup_{\btheta\in \Theta^+(s,a)}\EE_{\btheta}(\theta_j-L_j),
\end{equation} 
which represents the maximum distance between $\theta_j$ and $\EE_{\btheta}(L_j)$; see Section \ref{sec_opt} for further details. Intuitively, $L_j\leq \theta_j$ is expected in order for the one-sided confidence interval to cover the unknown parameter $\theta_j$. As a result, the smaller $\EE_{\btheta}(\theta_j-L_j)$ is, the more preferred the confidence interval is. However, the non-coverage probability of the confidence set $M(S, \bU, \bL)$ can be inflated, if we force $R(M(S, \bU, \bL), \Theta^+(s,a))$ to be too small. This trade-off is formalized in Theorem \ref{thm_minimax_onesided}. In particular,  we establish the non-asymptotic minimax lower bound for the non-coverage probability of $M(S, \bU, \bL)$ over the class of confidence sets that satisfy (\ref{eqsparsity}) and $R(M(S, \bU, \bL), \Theta^+(s,a))\leq m$ for some given $m$. Under the asymptotic regime $d,s\rightarrow\infty$, a direct implication of Theorem \ref{thm_minimax_onesided} is the minimax lower bound for $R(M(S, \bU, \bL), \Theta^+(s,a))$. This result is shown in Corollary \ref{cor_minimax_onesided2}. We further show that the asymptotic version of the proposed sparse confidence set, denoted by $\bar M_{\alpha'}$, attains the above minimax lower bound upto a constant factor $2$. Thus, the proposed sparse confidence set is optimal (upto a constant) with respect to $R(M(S, \bU, \bL), \Theta^+(s,a))$. 

While the proposed sparse confidence set $\bar M_{\alpha'}$ is optimal, the construction of $\bar M_{\alpha'}$ requires the knowledge of the unknown sparsity $s$ and the minimum signal strength $a$. In Section \ref{sec_adaptive}, we propose a sparse confidence set that is adaptive to the unknown $s$ and $a$. In Theorem \ref{themadap}, we show that, under the  asymptotic regime, the adaptive sparse confidence set attains the same minimax lower bound for $R(M(S, \bU, \bL), \Theta^+(s,a))$ upto a constant. 

Finally, in Section \ref{sec_twosided} we extend our methodology and theoretical results to two-sided sparse confidence intervals for $\btheta\in \Theta(s,a)$, where  
$$
\Theta(s,a)=\{\btheta\in\RR^d: \|\btheta\|_0\leq s, \min_{j: \theta_j\neq 0} |\theta_j|\geq a>0\}.
$$
Numerical studies are conducted in Section \ref{sec_numerical} to backup our methodology and theoretical results. The proofs are deferred to Section \ref{sec_proof}.

The following notations are used throughout the paper.  For any $a, b\in\RR$, denote $a\vee b=\max(a, b)$ and $a\wedge b=\min (a,b)$. Denote $(a)_+=a$ if $a>0$ and $0$ otherwise. For any sequences $a_n, b_n$, we write $a_n\sim b_n$ if $a_n/b_n\rightarrow 1$ as $n\rightarrow\infty$. 

\section{Sparse Confidence Sets for One-sided Parameter Space}\label{sec_onesided}
In this section, we consider how to construct sparse confidence sets $M(S, \bU, \bL)$ under the normal mean model $\bX\sim N(\btheta,\sigma^2\Ib)$, where $\btheta$ belongs to the space $\Theta^+(s,a)$ defined in (\ref{eqspace1}). In order to guarantee (\ref{eqcoverage}) and (\ref{eqsparsity}), the bottleneck is to identify the set $S$, if it is possible. In Section \ref{sec_index}, we consider how to construct the set $S$ as our first step. Once the set $S$ is available, we construct appropriate lower and upper confidence bounds $\bL$ and $\bU$ in Section \ref{sec_construc}. 



\subsection{Construction of the set $S$}\label{sec_index}

The first question concerns whether it is possible to construct an index set $S$ with the desired properties. Define
\begin{align}\label{eqF}
\cF(\delta)=\{S(\bX):  &~\PP_0(j\in S(\bX))\leq 1-\delta,\nonumber\\
&~~\textrm{and the event $\{j\in S(\bX)\}$ only depends on $X_j$ for any $j\in[d]$}\},
\end{align}
where $\PP_0$ denotes the probability of $X_j\sim N(0,\sigma^2)$ and $\delta$ is specified in (\ref{eqsparsity}). On top of (\ref{eqsparsity}), we require the following technical condition: whether $j$ is selected by $S(\bX)$ or not is independent of the data $X_i$ for $i\neq j$.  This additional restriction of $S(\bX)$ seems to be reasonable, as $X_i$ for $i\neq j$ is ancillary for $\theta_j$.

 The following theorem provides the non-asymptotic  minimax lower bound for $\PP_{\btheta}(\supp(\btheta)\not\subseteq \hat S )$ over $\hat S\in\cF(\delta)$ for any given $\delta$. 

\begin{theorem}[Minimax lower bound]\label{themlower}
For any $s\geq 1$ and $0< \delta<1$,  we have
\begin{equation}
\inf_{\hat S \in\cF(\delta)}\sup_{\btheta\in\Theta^+(s,a)}\PP_{\btheta}(\supp(\btheta)\not\subseteq \hat S )\geq 1-\frac{1}{(\Delta+1)^s},    \label{low}
\end{equation}
where $\Delta=\Phi(\Phi^{-1}(\delta)-a/\sigma)$. Furthermore, consider the asymptotic setting that $s,d\rightarrow\infty$.  Let $c_s$ denote an arbitrary sequence $c_s\rightarrow\infty$ and $c_s/s\rightarrow 0$. When the SNR satisfies
\begin{equation}\label{alower}
a/\sigma\le \kappa_*:=\Phi^{-1}(\delta)-\Phi^{-1}(c_s/s),
\end{equation} 
we have
\begin{equation}\label{low2}
\liminf_{d,s\rightarrow\infty}\inf_{\hat S \in\cF(\delta)}\sup_{\btheta\in\Theta^+(s,a)}\PP_{\btheta}(\supp(\btheta)\not\subseteq \hat S )=1. 
\end{equation} 
\end{theorem}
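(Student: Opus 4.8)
The plan is to prove \eqref{low} by exhibiting one least favorable $\btheta$ and bounding from below the probability that $\hat S$ misses its support, reducing the problem coordinatewise via the Neyman--Pearson lemma; \eqref{low2} then follows by feeding in the SNR condition \eqref{alower}. By the definition of $\cF(\delta)$ in \eqref{eqF}, for any $\hat S\in\cF(\delta)$ the event $\{j\in\hat S\}$ is determined by $X_j$ alone, so $\{j\in\hat S\}=\{X_j\in A_j\}$ for some Borel set $A_j\subseteq\RR$ with $\PP(N(0,\sigma^2)\in A_j)\le1-\delta$, and the events $\{1\in\hat S\},\dots,\{d\in\hat S\}$ are mutually independent since $X_1,\dots,X_d$ are. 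Take $\btheta^\ast\in\Theta^+(s,a)$ with $\supp(\btheta^\ast)=\{1,\dots,s\}$ and $\theta^\ast_1=\dots=\theta^\ast_s=a$. Then $\{\supp(\btheta^\ast)\subseteq\hat S\}=\bigcap_{j=1}^s\{X_j\in A_j\}$, so by independence
\begin{equation*}
\PP_{\btheta^\ast}\big(\supp(\btheta^\ast)\not\subseteq\hat S\big)=1-\prod_{j=1}^s\PP\big(N(a,\sigma^2)\in A_j\big).
\end{equation*}

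The key step is to bound $\PP(N(a,\sigma^2)\in A_j)$ above given $\PP(N(0,\sigma^2)\in A_j)\le1-\delta$. Regard $A_j$ as the rejection region of a test of $N(0,\sigma^2)$ versus $N(a,\sigma^2)$; its level is $\PP(N(0,\sigma^2)\in A_j)\le1-\delta$ and its power is $\PP(N(a,\sigma^2)\in A_j)$. Since the likelihood ratio is increasing in $X_j$, the Neyman--Pearson lemma makes the most powerful level-$(1-\delta)$ test the half-line $\{X_j\ge\sigma\Phi^{-1}(\delta)\}$, with power $1-\Phi(\Phi^{-1}(\delta)-a/\sigma)=1-\Delta$; as the power of the likelihood-ratio test is nondecreasing in its level, every admissible $A_j$ has $\PP(N(a,\sigma^2)\in A_j)\le1-\Delta$. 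Hence $\PP_{\btheta^\ast}(\supp(\btheta^\ast)\not\subseteq\hat S)\ge1-(1-\Delta)^s$, and since $(1-\Delta)(1+\Delta)=1-\Delta^2\le1$ gives $1-\Delta\le(1+\Delta)^{-1}$, this is $\ge1-(\Delta+1)^{-s}$. As the bound is valid for every $\hat S\in\cF(\delta)$, first taking $\sup_{\btheta\in\Theta^+(s,a)}$ and then $\inf_{\hat S\in\cF(\delta)}$ yields \eqref{low}.

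For \eqref{low2}, suppose \eqref{alower} holds, i.e.\ $a/\sigma\le\Phi^{-1}(\delta)-\Phi^{-1}(c_s/s)$; then $\Phi^{-1}(\delta)-a/\sigma\ge\Phi^{-1}(c_s/s)$, so monotonicity of $\Phi$ gives $\Delta=\Phi(\Phi^{-1}(\delta)-a/\sigma)\ge c_s/s$, whence $s\Delta\ge c_s\to\infty$. Using $\log(1+\Delta)\ge\Delta-\Delta^2/2\ge\Delta/2$ for $\Delta\in[0,1]$ (and $\Delta<1$ always here), we get $(\Delta+1)^{-s}=e^{-s\log(1+\Delta)}\le e^{-s\Delta/2}\le e^{-c_s/2}\to0$. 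Combining with \eqref{low}, $\inf_{\hat S\in\cF(\delta)}\sup_{\btheta\in\Theta^+(s,a)}\PP_{\btheta}(\supp(\btheta)\not\subseteq\hat S)\ge1-e^{-c_s/2}\to1$, and since this quantity never exceeds $1$, its liminf equals $1$.

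I expect the main obstacle to be making the Neyman--Pearson reduction airtight, specifically justifying that ``$\{j\in\hat S\}$ depends only on $X_j$'' forces $\{j\in\hat S\}=\{X_j\in A_j\}$ for a \emph{fixed} Borel set $A_j$. This is precisely what renders the coordinatewise events independent and transfers the constraint $\PP_0(j\in\hat S)\le1-\delta$ to $\PP(N(0,\sigma^2)\in A_j)\le1-\delta$; were $\hat S$ allowed to use extra randomization shared across coordinates, the product factorization above would break and only the much weaker bound $\ge\Delta$ would remain, so this reading should be stated explicitly at the outset.
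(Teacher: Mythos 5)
Your proof is correct, but it takes a genuinely different (and in fact leaner) route than the paper. The paper lower-bounds the supremum by an average over all $\binom{d}{s}$ supports in $\partial\Theta^+(s,a)$, expands $\PP_{\btheta}(\cup_{j\in S}\{j\notin\hat S\})$ into sums of products, bounds the ``coverage'' factors by $t=\inf_{\btheta}\prod_{j\in S}\PP_{\theta_j}(j\in\hat S)$, and obtains the two competing bounds $1-t$ and $t[(1+\Delta)^s-1]$; optimizing over $t$ is precisely what produces the stated form $1-(\Delta+1)^{-s}$. You instead fix a single least favorable $\btheta^*$ with $s$ coordinates equal to $a$, use the coordinatewise factorization guaranteed by $\cF(\delta)$, and apply Neyman--Pearson per coordinate to get the non-coverage bound $1-(1-\Delta)^s$, which you then relax to $1-(\Delta+1)^{-s}$ via $(1-\Delta)(1+\Delta)\le 1$. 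Both arguments rest on the same two ingredients (independence of the events $\{j\in\hat S\}$ across $j$ and the one-dimensional NP reduction giving $\inf\PP_a(j\notin\hat S)=\Delta$), but your pointwise argument avoids the combinatorial averaging and the optimization over $t$ altogether, and it actually yields the sharper intermediate constant $1-(1-\Delta)^s\ge 1-(\Delta+1)^{-s}$, showing the paper's stated bound is not the best this machinery gives; what the paper's mixture-over-supports device buys is mainly a template that generalizes to settings where a single least favorable point does not suffice (e.g., the two-sided case and the later minimax theorems, where averaging over random supports is essential). Your handling of the asymptotic claim (\ref{low2}) mirrors the paper's: both deduce $\Delta\ge c_s/s$ from (\ref{alower}) and conclude $(\Delta+1)^s\to\infty$, your version via $\log(1+\Delta)\ge\Delta/2$ and the paper's via $\log(1+c_s/s)>(1-\epsilon)c_s/s$. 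Your closing caveat about reading ``$\{j\in\hat S\}$ depends only on $X_j$'' as $\{X_j\in A_j\}$ for a fixed Borel set is exactly the interpretation the paper itself uses when it factorizes $\PP_{\btheta}(\supp(\btheta)\subseteq\hat S)$, so no gap there.
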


A few remarks are in order. First, we note that the non-asymptotic lower bound in (\ref{low}) depends on the true negative rate (TNR) $\delta$, the signal-to-noise ratio (SNR) $a/\sigma$ and the sparsity level $s$. Since we are only interested in whether the nonzero parameters in $\btheta$ are selected by $\hat S$ or not, the lower bound is free of the dimensionality $d$, which differs from the lower bounds for support recovery \citep{butucea2018variable}. Second, the role of SNR and TNR becomes more transparent in the asymptotic regime as both $d,s\rightarrow\infty$. In particular, the asymptotic lower bound in (\ref{low2}) implies that when the SNR is finite or diverges slowly enough $(a/\sigma\leq \kappa_*)$, it is impossible to construct sparse confidence sets that cover $\supp(\btheta)$ uniformly over the parameter space $\Theta^+(s,a)$. Third, we comment that $\kappa_*> 0$ if and only if $\delta> c_s/s$. Thus, the negative result (\ref{low2}) is meaningful only if the pre-specified TNR is greater than $c_s/s$. 



Recall that in view of the definition of the sparse confidence set, $\btheta\in M(S, \bU, \bL)$ implies $\supp(\btheta)\subseteq  S$. Thus, Theorem \ref{themlower} leads to the following simple corollary on the feasibility of sparse confidence sets. To avoid repetition, we only present the asymptotic result. 
\begin{corollary}\label{corlower1}
Under the asymptotic setting $s,d\rightarrow\infty$, if (\ref{alower}) holds, then for any sparse confidence set $M(\hat S, \bU, \bL)$ with $\hat S \in\cF(\delta)$ we always have
$$
\liminf_{d,s\rightarrow\infty}\sup_{\btheta\in\Theta^+(s,a)}\PP_{\btheta}(\btheta\notin M(\hat S, \bU, \bL))=1. 
$$
\end{corollary}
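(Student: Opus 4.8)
The plan is to obtain this as an immediate consequence of the asymptotic lower bound (\ref{low2}) in Theorem~\ref{themlower}, using only the structural constraint built into the definition (\ref{eqci}) of a sparse confidence set. The one observation that makes the reduction work is that membership of the true parameter in $M(\hat S,\bU,\bL)$ already forces the support of $\btheta$ to be captured by $\hat S$.

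First I would record the set inclusion. By (\ref{eqci}), $\btheta\in M(\hat S,\bU,\bL)$ requires $\btheta_{\hat S^c}=0$, i.e.\ $\theta_j=0$ for every $j\notin\hat S$, which is precisely $\supp(\btheta)\subseteq\hat S$. Contrapositively, the event $\{\supp(\btheta)\not\subseteq\hat S\}$ is contained in the event $\{\btheta\notin M(\hat S,\bU,\bL)\}$, so by monotonicity of $\PP_{\btheta}$,
\[
\PP_{\btheta}\big(\btheta\notin M(\hat S,\bU,\bL)\big)\ \ge\ \PP_{\btheta}\big(\supp(\btheta)\not\subseteq\hat S\big)
\]
for every $\btheta\in\Theta^+(s,a)$. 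Taking the supremum over $\btheta\in\Theta^+(s,a)$ on both sides and then, since the given $\hat S$ lies in $\cF(\delta)$, bounding the right-hand side below by the infimum over $\cF(\delta)$ appearing in Theorem~\ref{themlower}, one gets
\[
\sup_{\btheta\in\Theta^+(s,a)}\PP_{\btheta}\big(\btheta\notin M(\hat S,\bU,\bL)\big)\ \ge\ \inf_{\hat S\in\cF(\delta)}\sup_{\btheta\in\Theta^+(s,a)}\PP_{\btheta}\big(\supp(\btheta)\not\subseteq\hat S\big).
\]
Passing to $\liminf_{d,s\to\infty}$ and invoking (\ref{low2}) under assumption (\ref{alower}) shows the left-hand side has $\liminf\ge 1$; since probabilities never exceed $1$, the $\liminf$ equals $1$, which is the claim.

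There is essentially no obstacle here: the entire content sits in Theorem~\ref{themlower}, and the corollary is just the translation from ``the support fails to be covered'' to ``the parameter is not in the confidence set,'' which is immediate from the definition of sparse confidence sets. The only point to be careful about is that $\hat S$ is fixed but arbitrary in $\cF(\delta)$, so one routes the argument through the infimum bound of Theorem~\ref{themlower} rather than needing any fresh uniformity argument over the class of confidence sets.
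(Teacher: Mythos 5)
Your proposal is correct and follows exactly the route the paper intends: the paper states the corollary as an immediate consequence of Theorem~\ref{themlower} via the same observation that $\btheta\in M(\hat S,\bU,\bL)$ forces $\supp(\btheta)\subseteq\hat S$, so non-coverage of the support implies non-coverage of $\btheta$, and then (\ref{low2}) applies. Nothing further is needed.
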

As a result, the two requirements (\ref{eqcoverage}) and (\ref{eqsparsity}) cannot hold simultaneously unless the SNR is above the threshold $\kappa_*$ defined in (\ref{alower}).

In the following, we construct an index set that satisfies the desired coverage probability under certain signal strength condition. The estimator $\hat S_{\alpha'}$ is defined as
\begin{equation}\label{eq1}
\hat S_{\alpha'}=\Big\{j\in [d]: \frac{X_j}{\sigma}\geq (\Phi^{-1}(\frac{\alpha'}{s})+\frac{a}{\sigma})\vee \Phi^{-1}(\delta)\Big\},
\end{equation}
where $\alpha'$ denotes the tolerance level for the non-coverage probability of the index set. The following theorem shows that $\hat S_{\alpha'}$ belongs to the set $\cF(\delta)$ and the non-coverage probability of $\hat S_{\alpha'}$ is no greater than $\alpha'$.

\begin{theorem}[Upper bound]\label{themupper}
For any $0<\alpha'<1$, it holds that $\hat S_{\alpha'} \in\cF(\delta)$. In addition, if 
\begin{equation}\label{astar}
a/\sigma\geq \kappa^*:= \Phi^{-1}(\delta)-\Phi^{-1}(\alpha'/s)
\end{equation}
holds, then
\begin{equation}\label{up2}
\sup_{\btheta\in\Theta^+(s,a)}\PP_{\btheta}( \supp(\btheta) \not\subseteq \hat S_{\alpha'} )\leq \alpha'.
\end{equation} 
\end{theorem}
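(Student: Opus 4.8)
The plan is to verify the two assertions separately, each by a direct computation with the Gaussian c.d.f. For the membership claim $\hat S_{\alpha'}\in\cF(\delta)$, I would first observe that by the very definition (\ref{eq1}) the event $\{j\in\hat S_{\alpha'}\}=\{X_j/\sigma\geq t\}$ with $t:=(\Phi^{-1}(\alpha'/s)+a/\sigma)\vee\Phi^{-1}(\delta)$ is measurable with respect to $X_j$ alone, so the second defining property of $\cF(\delta)$ in (\ref{eqF}) holds automatically. For the first property, under $\PP_0$ we have $X_j/\sigma\sim N(0,1)$, and since $t\geq\Phi^{-1}(\delta)$ by construction, monotonicity of $\Phi$ gives $\PP_0(j\in\hat S_{\alpha'})=1-\Phi(t)\leq 1-\Phi(\Phi^{-1}(\delta))=1-\delta$, as required. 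Note this part needs no condition on the SNR.

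For the coverage bound, I would decompose the event $\{\supp(\btheta)\not\subseteq\hat S_{\alpha'}\}$ as $\bigcup_{j\in\supp(\btheta)}\{X_j/\sigma< t\}$ and apply a union bound; since $|\supp(\btheta)|=\|\btheta\|_0\leq s$ for $\btheta\in\Theta^+(s,a)$, this yields $\PP_\btheta(\supp(\btheta)\not\subseteq\hat S_{\alpha'})\leq\sum_{j\in\supp(\btheta)}\PP_\btheta(X_j/\sigma< t)$. The key simplification is that under the SNR condition (\ref{astar}), namely $a/\sigma\geq\Phi^{-1}(\delta)-\Phi^{-1}(\alpha'/s)$, one has $\Phi^{-1}(\alpha'/s)+a/\sigma\geq\Phi^{-1}(\delta)$, so the maximum defining $t$ is attained by the first term and $t=\Phi^{-1}(\alpha'/s)+a/\sigma$. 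For any $j\in\supp(\btheta)$ the one-sided constraint gives $\theta_j\geq a$, hence, using $X_j/\sigma-\theta_j/\sigma\sim N(0,1)$, $\PP_\btheta(X_j/\sigma< t)=\Phi(t-\theta_j/\sigma)\leq\Phi(t-a/\sigma)=\Phi(\Phi^{-1}(\alpha'/s))=\alpha'/s$. Summing over the at most $s$ nonzero coordinates produces the bound $\alpha'$, and taking the supremum over $\btheta\in\Theta^+(s,a)$ finishes the argument.

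Since every step reduces to the monotonicity of $\Phi$ and a union bound, I do not anticipate a genuine obstacle; the only points requiring care are (i) correctly resolving the $\vee$ in the definition of $t$ via the SNR assumption (\ref{astar}), so that the shift $t-a/\sigma$ collapses to exactly $\Phi^{-1}(\alpha'/s)$, and (ii) invoking the \emph{one-sided} sparsity constraint $\min_{j:\theta_j\neq 0}\theta_j\geq a$ (rather than a two-sided $|\theta_j|\geq a$) to guarantee $\theta_j/\sigma\geq a/\sigma$ for every $j$ in the support, which is precisely what makes $\Phi(t-\theta_j/\sigma)$ at most $\alpha'/s$.
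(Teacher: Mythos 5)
Your proposal is correct and follows essentially the same route as the paper: the membership in $\cF(\delta)$ is verified by the same monotonicity argument with the threshold bounded below by $\Phi^{-1}(\delta)$, and the coverage bound uses the identical union bound over the support, resolution of the maximum via (\ref{astar}), and the one-sided constraint $\theta_j\geq a$ to get $\Phi(\Phi^{-1}(\alpha'/s)+(a-\theta_j)/\sigma)\leq \alpha'/s$ per coordinate. No gaps.
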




\begin{remark}\label{rem1}
It is of interest to compare the two thresholds $\kappa_*$ in (\ref{alower}) and $\kappa^*$ in (\ref{astar}). Assume that $s\rightarrow\infty$ and $\alpha'$ is fixed. By the tail bound inequality for Gaussian random variables (e.g., Lemma \ref{lemtail}), we can show that $\kappa^*\sim \Phi^{-1}(\delta)+\sqrt{2\log s}$.  Similarly, we have $\kappa_*\sim \Phi^{-1}(\delta)+\sqrt{2\log (s/c_s)}$. Thus, Theorems \ref{themlower} and \ref{themupper} together imply a phase transition at the level $\Phi^{-1}(\delta)+\sqrt{2\log s}$, i.e.,
\begin{itemize}
\item if $a/\sigma\leq \Phi^{-1}(\delta)+(1-\epsilon)\sqrt{2\log s}$ for some small positive constant $\epsilon$, it is impossible to construct sparse confidence sets, i.e., (\ref{low2}) holds.
\item if $a/\sigma\geq \Phi^{-1}(\delta)+\sqrt{2\log s}$, $\hat S_{\alpha'}$ has the desired coverage probability,  i.e., (\ref{up2}) holds, which leads to a valid sparse confidence set as shown in the next subsection.
\end{itemize}
\end{remark}

\subsection{Construction of one-sided confidence sets}\label{sec_construc}

In this section, we are ready to construct the confidence set based on $\hat S_{\alpha'}$ in (\ref{eq1}). Recall that $\theta_j\geq 0$ in $\Theta^+(s,a)$. We are mainly interested in the one-sided confidence interval concerning the distance of the lower confidence bound to $0$. Specifically, for $j\in \hat S_{\alpha'}$, we would like to construct a one-sided confidence interval $[c_j,+\infty)$ with some $c_j\geq 0$. If $c_j$ is strictly greater than $0$ (i.e., $0$ is not contained in the confidence interval), we can conclude that $\theta_j$ is nonzero with the desired confidence level. 
Thus, we define the one-sided sparse confidence set as 
\begin{equation}\label{eqhatm}
\hat M_{\alpha'}=M(\hat S_{\alpha'},  \hat\bU,  \hat\bL), ~\textrm{where}~  \hat L_j=(X_j- \hat u_{\alpha'}\sigma)_+, ~\hat U_j=+\infty
\end{equation} 
for any $j\in \hat S_{\alpha'} $ and $\hat u_{\alpha'}$ is to be specified later to attain the desired coverage probability. To simplify the presentation, we treat $\alpha'$ as a given tuning parameter.


We partition the signal-to-noise ratio (SNR) region into low and high levels for constructing the sparse confidence set (\ref{eqhatm}): 
\begin{itemize}
\item Low SNR region: $R_L=\big\{\kappa: \kappa^*\leq \kappa< \kappa^*\vee \hat \kappa\big\}$,
\item High SNR region: $R_H=\big\{\kappa: \kappa\geq \kappa^*\vee \hat \kappa\big\}$,
\end{itemize}
where
\begin{equation}\label{ahat}
\hat \kappa=-\Phi^{-1}(\frac{\alpha-\alpha'}{d})-\Phi^{-1}(\frac{\alpha'}{s})
\end{equation}
and $\alpha$ is the desired level specified in (\ref{eqcoverage}). In both regions, we require the SNR to be no smaller than $\kappa^*$ in order to guarantee (\ref{up2}); see Remark~\ref{rem1}. Under the asymptotic regime $d,s\rightarrow\infty$, provided that $\alpha, \alpha'$ and $\delta$ are all taken to be constants, we have $\kappa^*< \hat \kappa$ and $R_L$ and $R_H$ reduce to $\{\kappa: \kappa^*\leq \kappa<\hat \kappa\}$ and $\{\kappa: \kappa\geq \hat \kappa\}$, respectively. However, if the pre-specified TNR is sufficiently close to $1$, i.e., $\delta> 1-(\alpha-\alpha')/d$, we have $\kappa^*>\hat\kappa$. In this case, $R_L$ becomes an empty set and $R_H=\{\kappa: \kappa\geq \kappa^*\}$.

The following theorem shows that with a suitable choice of $\hat u_{\alpha'}$ the sparse confidence set (\ref{eqhatm}) satisfies (\ref{eqcoverage}) and (\ref{eqsparsity}).

\begin{theorem}\label{thm_spraseCI}
For any $0<\alpha'<\alpha$, provided (\ref{astar}) holds, we have 
$$
\sup_{\btheta\in\Theta^+(s,a), \theta_j=0}\PP_{\btheta}(j\in \hat S_{\alpha'})\leq 1-\delta,~~\textrm{and}~~\sup_{\btheta\in\Theta^+(s,a)}\PP_{\btheta}(\btheta\notin \hat M_{\alpha'})\leq \alpha,
$$ 
where $\hat u_{\alpha'}$ in (\ref{eqhatm}) is given by
$$
\hat u_{\alpha'}=\left\{
\begin{array}{ll}
\Phi^{-1}\Big(1-\frac{\alpha-\alpha'}{d}\Big) &\textrm{if}~ a/\sigma\in R_L,\\
\Phi^{-1}\Big(1-\frac{\alpha-\alpha'-(d-s)(1-\eta^+)}{s}\Big)&\textrm{if}~ a/\sigma\in R_H,
\end{array}
\right.
$$
where $\eta^+=\Phi(\frac{a}{\sigma}+\Phi^{-1}(\frac{\alpha'}{s}))$.
\end{theorem}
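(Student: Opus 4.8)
The plan is to establish the sparsity bound and the coverage bound separately; the sparsity bound is essentially immediate, while the coverage bound follows from a union bound together with a case analysis over the low- and high-SNR regions $R_L$ and $R_H$. For the sparsity bound, fix $j$ with $\theta_j=0$: under $\PP_{\btheta}$ the coordinate $X_j$ has the same law $N(0,\sigma^2)$ as under $\PP_0$, and by construction the event $\{j\in\hat S_{\alpha'}\}$ depends on $X_j$ alone, so $\PP_{\btheta}(j\in\hat S_{\alpha'})=\PP_0(j\in\hat S_{\alpha'})$; since the threshold defining $\hat S_{\alpha'}$ is at least $\Phi^{-1}(\delta)$, this equals $1-\Phi\big((\Phi^{-1}(\alpha'/s)+a/\sigma)\vee\Phi^{-1}(\delta)\big)\le 1-\Phi(\Phi^{-1}(\delta))=1-\delta$. (This is also contained in the statement $\hat S_{\alpha'}\in\cF(\delta)$ of Theorem~\ref{themupper}.)

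For the coverage bound, I would first reduce the non-coverage event. By the definition~(\ref{eqci}) of $\hat M_{\alpha'}=M(\hat S_{\alpha'},\hat\bU,\hat\bL)$ with $\hat U_j=+\infty$ and $\hat L_j=(X_j-\hat u_{\alpha'}\sigma)_+$, a true $\btheta\in\Theta^+(s,a)$ fails to lie in $\hat M_{\alpha'}$ exactly when $\supp(\btheta)\not\subseteq\hat S_{\alpha'}$ or $\theta_j<\hat L_j$ for some $j\in\hat S_{\alpha'}$; and since $\theta_j\ge 0$, the latter event equals $\{(X_j-\theta_j)/\sigma>\hat u_{\alpha'}\}$. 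A union bound then gives
\[
\PP_{\btheta}(\btheta\notin\hat M_{\alpha'})\le\PP_{\btheta}\big(\supp(\btheta)\not\subseteq\hat S_{\alpha'}\big)+\sum_{j=1}^d\PP_{\btheta}\big(j\in\hat S_{\alpha'},\,(X_j-\theta_j)/\sigma>\hat u_{\alpha'}\big).
\]
Since $a/\sigma\ge\kappa^*$ in both regions, condition~(\ref{astar}) holds and Theorem~\ref{themupper} bounds the first term by $\alpha'$, so it remains to bound the sum by $\alpha-\alpha'$.

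In $R_L$, where $\hat u_{\alpha'}=\Phi^{-1}(1-(\alpha-\alpha')/d)$, I drop the indicator $\{j\in\hat S_{\alpha'}\}$ and use $(X_j-\theta_j)/\sigma\sim N(0,1)$, so each summand equals $(\alpha-\alpha')/d$ and the sum is exactly $\alpha-\alpha'$. In $R_H$, I have $1-\Phi(\hat u_{\alpha'})=\big(\alpha-\alpha'-(d-s)(1-\eta^+)\big)/s$, which one checks lies in $(0,1)$ using $1\le s\le d$ and the inequality $d(1-\eta^+)\le\alpha-\alpha'$; the latter is just $a/\sigma\ge\hat\kappa$ rewritten through $\eta^+=\Phi(a/\sigma+\Phi^{-1}(\alpha'/s))$ and the definition~(\ref{ahat}) of $\hat\kappa$. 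Splitting the sum according to whether $\theta_j=0$: for the $k:=\|\btheta\|_0\le s$ nonzero coordinates each summand is $\le 1-\Phi(\hat u_{\alpha'})$, while for the zero coordinates membership in $\hat S_{\alpha'}$ forces $X_j/\sigma\ge\Phi^{-1}(\alpha'/s)+a/\sigma$, so each such summand is $\le 1-\eta^+$. Hence the sum is at most $k(1-\Phi(\hat u_{\alpha'}))+(d-k)(1-\eta^+)$; substituting the value of $1-\Phi(\hat u_{\alpha'})$ and simplifying, the desired bound $\le\alpha-\alpha'$ reduces to $(s-k)\big(d(1-\eta^+)-(\alpha-\alpha')\big)\le 0$, which is trivial when $k=s$ and, when $k<s$, holds precisely because $d(1-\eta^+)\le\alpha-\alpha'$ in $R_H$.

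The step I expect to be the main obstacle is the high-SNR bookkeeping: one must handle every possible value of $\|\btheta\|_0$ rather than only $\|\btheta\|_0=s$, observe that the worst case is governed by the inequality $d(1-\eta^+)\le\alpha-\alpha'$, and recognize that this is precisely the condition that the definition~(\ref{ahat}) of $\hat\kappa$, and therefore the region $R_H$, was engineered to enforce. Everything else amounts to routine Gaussian tail estimates and union bounds.
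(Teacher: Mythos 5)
Your proof is correct and follows essentially the same route as the paper's: intersect with the event $\supp(\btheta)\subseteq\hat S_{\alpha'}$ (bounded by $\alpha'$ via Theorem \ref{themupper}), then union-bound the remaining non-coverage events, splitting zero from nonzero coordinates and exploiting the definitions of $\hat u_{\alpha'}$ and $\hat\kappa$ in the regions $R_L$ and $R_H$. Your explicit handling of $k=\|\btheta\|_0<s$ in the high-SNR case, reducing the bound to $(s-k)\big(d(1-\eta^+)-(\alpha-\alpha')\big)\le 0$, is a slightly more careful bookkeeping of a step the paper treats implicitly through the threshold comparison $\Phi^{-1}(\alpha'/s)+a/\sigma>\hat u_{\alpha'}$ (equivalently $1-\Phi(\hat u_{\alpha'})\ge 1-\eta^+$), but the argument is the same.
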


Theorem \ref{thm_spraseCI} implies that, when the SNR belongs to the low SNR region assuming it exists,  the confidence interval for $\theta_j$ is either $0$ if $j\notin \hat S_{\alpha'} $ or $[(X_j-\sigma\Phi^{-1}(1-\frac{\alpha-\alpha'}{d}))_+, +\infty)$ if $j\in \hat S_{\alpha'}$. Note that the one-sided confidence interval for $\theta_j$ with Bonferroni correction (without accounting for sparsity) is given by 
\begin{equation}\label{eqBon}
\Big[(X_j-\sigma\Phi^{-1}(1-\frac{\alpha}{d}))_+, +\infty\Big) 
\end{equation}
for $1\leq j\leq d$. Thus, in the case of low SNR, our sparse confidence set for $\theta_j$ with $j\in \hat S_{\alpha'}$ can be viewed as the Bonferroni correction at a reduced level $\alpha-\alpha'$ in order to account for the randomness of the estimated set $\hat S_{\alpha'}$.

To better understand the choice of $\hat u_{\alpha'}$ in the high SNR region $R_H$, we focus on the following subset of $R_H$,  
\begin{equation}\label{eqspraseCI0}
\frac{a}{\sigma}\geq \Big(-\Phi^{-1}(\frac{\alpha-\alpha'}{(1+\epsilon)d})-\Phi^{-1}(\frac{\alpha'}{s})\Big)\vee \kappa^*,
\end{equation} 
where $\epsilon$ is an arbitrarily small positive constant.  In this case,  we can show that
$$
(d-s)(1-\eta^+)\leq \frac{\alpha-\alpha'}{1+\epsilon}\frac{d-s}{d}\leq \frac{\alpha-\alpha'}{1+\epsilon}.
$$
As a result, we have
\begin{equation}\label{eqspraseCI1}
\hat u_{\alpha'}\leq \Phi^{-1}\Big(1-\frac{(\alpha-\alpha')}{s(1+\epsilon)/\epsilon}\Big).
\end{equation}
Recall that the oracle confidence interval is defined as 
\begin{equation}\label{eqoracle}
\Big[(X_j-\sigma\Phi^{-1}(1-\frac{\alpha}{s}))_+, +\infty\Big)
\end{equation}
for $j\in  \supp(\btheta) $ and $0$ otherwise. Thus, when (\ref{eqspraseCI0}) holds, our sparse confidence set with (\ref{eqspraseCI1}) is in the similar spirit to the oracle interval with a multiplicity correction factor $s(1+\epsilon)/\epsilon$ at level $\alpha-\alpha'$. 



Theorem \ref{thm_spraseCI} and the above remarks demonstrate the non-asymptotic behavior of the sparse confidence set $\hat M_{\alpha'}$ in (\ref{eqhatm}). 
To investigate the optimality of the sparse confidence set in the next section, it is more convenient to study the asymptotic version of $\hat M_{\alpha'}$ as  $d,s\rightarrow\infty$. In the asymptotic regime, the pre-specified levels $\alpha$ and $\delta$ are treated as fixed. 

Define two cut-off points for the SNR as
$$
\kappa^{**}=\Phi^{-1}(\delta)+\sqrt {2\log \left(\frac{s}{C_{s,\alpha'}\alpha'}\right)},
$$
and 
$$
\bar \kappa=\sqrt{2\log \left(\frac{2(d-s)}{(\alpha-\alpha')C_{d-s,\alpha-\alpha'}}\right)}+\sqrt {2\log \left(\frac{s}{C_{s,\alpha'}\alpha'}\right)},
$$
where $C_{s,\alpha'}=2(\pi\log(s/\alpha'))^{1/2}$.  The cut-off points $\kappa^{**}$ and $\bar \kappa$ are the asymptotic versions of $\kappa^*$ in (\ref{astar}) and $\hat \kappa$ in (\ref{ahat}) respectively, by applying the tail bound inequality for Gaussian random variables (e.g., Lemma \ref{lemtail}). Note that $\bar \kappa$ diverges to infinity faster than $\kappa^*$ as $d,s\rightarrow\infty$. Thus, unlike the high SNR region $R_H$ in the non-asymptotic setting, there is no need to take the maximum of $\bar \kappa$ and $\kappa^*$. 

The asymptotic version of our sparse confidence set $\hat M_{\alpha'}$ is
\begin{equation}\label{eqhatmasy}
\bar M_{\alpha'}=M(\bar S_{\alpha'},  \bar\bU,  \bar\bL), ~\textrm{where}~  \bar L_j=(X_j- \bar u_{\alpha'}\sigma)_+, ~\bar U_j=+\infty
\end{equation} 
for $j\in \bar S_{\alpha'}$. Here, $\bar S_{\alpha'}$ and $\bar u_{\alpha'}$ are defined as follows. 
\begin{itemize}
\item When $\kappa^{**}\leq a/\sigma<\bar \kappa$, define $j\in \bar S_{\alpha'}$ if and only if $X_j/\sigma\geq  \Phi^{-1}(\delta)$, and 
\begin{equation}\label{eqhatu1}
\bar u_{\alpha'}=\sqrt{2\log \Big(\frac{d}{(\alpha-\alpha')C_{d,\alpha-\alpha'}}\Big)}.
\end{equation} 
\item When $a/\sigma\geq \bar \kappa$, define $j\in \bar S_{\alpha'}$ if and only if $X_j/\sigma\geq \sqrt{2\log (\frac{2(d-s)}{(\alpha-\alpha')C_{d-s,\alpha-\alpha'}})}$, and 
\begin{equation}\label{eqhatu2}
\bar u_{\alpha'}=\sqrt{2\log \Big(\frac{2s}{(\alpha-\alpha')C_{s,\alpha-\alpha'}}\Big)}.
\end{equation} 
\end{itemize}
The asymptotic properties of $\bar M_{\alpha'}$ are shown in the following corollary.

\begin{corollary}\label{cor_asym}
Assume that $d,s\rightarrow\infty$ and $\delta, \alpha$ are pre-specified fixed constants. For any $0<\alpha'<\alpha$, provided $\kappa^{**}\leq a/\sigma$, we have
$$
\limsup_{d,s\rightarrow\infty}\sup_{\btheta\in\Theta^+(s,a), \theta_j=0}\PP_{\btheta}(j\in \bar S_{\alpha'})\leq 1-\delta, ~~\limsup_{d,s\rightarrow\infty}\sup_{\btheta\in\Theta^+(s,a)}\PP_{\btheta}(\btheta\notin \bar M_{\alpha'})\leq \alpha.
$$
\end{corollary}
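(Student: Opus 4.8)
The plan is to derive Corollary~\ref{cor_asym} as the asymptotic consequence of the non-asymptotic statement in Theorem~\ref{thm_spraseCI}, by showing that $\bar S_{\alpha'}$, $\bar u_{\alpha'}$ and the two SNR cut-offs $\kappa^{**}, \bar\kappa$ are exactly the limits of $\hat S_{\alpha'}$, $\hat u_{\alpha'}$ and $\kappa^*, \hat\kappa$ after applying the Gaussian tail estimate of Lemma~\ref{lemtail}. First I would translate the thresholds. Using $\Phi^{-1}(1-t)\sim\sqrt{2\log(1/t)}$ and the sharper two-sided bound $1-\Phi(x)=\frac{\phi(x)}{x}(1+o(1))$ from Lemma~\ref{lemtail}, one gets $-\Phi^{-1}(\alpha'/s)=\Phi^{-1}(1-\alpha'/s)\sim\sqrt{2\log(s/(C_{s,\alpha'}\alpha'))}$, which identifies $\kappa^*=\Phi^{-1}(\delta)-\Phi^{-1}(\alpha'/s)$ with $\kappa^{**}$ up to $1+o(1)$; similarly $\hat\kappa$ matches $\bar\kappa$. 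I would record that $\kappa^{**}\to\infty$ and that $\bar\kappa$ dominates $\kappa^{**}$, so on the event $\{\kappa^{**}\le a/\sigma<\bar\kappa\}$ the defining threshold of $\hat S_{\alpha'}$, namely $(\Phi^{-1}(\alpha'/s)+a/\sigma)\vee\Phi^{-1}(\delta)$, reduces asymptotically to $\Phi^{-1}(\delta)$ (because $\Phi^{-1}(\alpha'/s)+a/\sigma\le \Phi^{-1}(\alpha'/s)+\bar\kappa$, and I would check this is $\le\Phi^{-1}(\delta)$ up to $o(1)$ using the definitions), while on $\{a/\sigma\ge\bar\kappa\}$ it reduces to $\sqrt{2\log(2(d-s)/((\alpha-\alpha')C_{d-s,\alpha-\alpha'}))}$, matching the two cases in the definition of $\bar S_{\alpha'}$.

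Next I would handle $\hat u_{\alpha'}$. In the low-SNR case, $\Phi^{-1}(1-(\alpha-\alpha')/d)\sim\sqrt{2\log(d/((\alpha-\alpha')C_{d,\alpha-\alpha'}))}$ by Lemma~\ref{lemtail}, which is exactly $\bar u_{\alpha'}$ in \eqref{eqhatu1}. In the high-SNR case, I would show $\eta^+=\Phi(a/\sigma+\Phi^{-1}(\alpha'/s))\to 1$ at a rate making $(d-s)(1-\eta^+)$ vanish relative to $\alpha-\alpha'$ once $a/\sigma\ge\bar\kappa$ (this is precisely the computation behind \eqref{eqspraseCI0}--\eqref{eqspraseCI1}, carried to the asymptotic limit), so that $\Phi^{-1}(1-(\alpha-\alpha'-(d-s)(1-\eta^+))/s)\sim\Phi^{-1}(1-(\alpha-\alpha')/s)\sim\sqrt{2\log(2s/((\alpha-\alpha')C_{s,\alpha-\alpha'}))}=\bar u_{\alpha'}$ in \eqref{eqhatu2}. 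With these identifications in place, both conclusions follow: the TNR bound $\PP_{\btheta}(j\in\bar S_{\alpha'})\le 1-\delta$ for $\theta_j=0$ is immediate from $\bar S_{\alpha'}\in\cF(\delta)$ (its defining threshold is at least $\Phi^{-1}(\delta)$ in both cases, so $\PP_0(X_j/\sigma\ge t)\le\PP_0(X_j/\sigma\ge\Phi^{-1}(\delta))=1-\delta$), and the coverage bound follows from the non-asymptotic bound in Theorem~\ref{thm_spraseCI} together with the fact that $\bar u_{\alpha'}$ asymptotically dominates $\hat u_{\alpha'}$ (a larger cut-off only shrinks $\bar L_j$, hence can only increase coverage), after checking $\kappa^{**}\le a/\sigma$ eventually implies $\kappa^*\le a/\sigma$ so that the hypothesis of Theorem~\ref{thm_spraseCI} is met.

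Alternatively, and perhaps more cleanly, I would prove the coverage statement directly rather than by comparison: decompose the non-coverage event as $\{\supp(\btheta)\not\subseteq\bar S_{\alpha'}\}\cup\{\exists j\in\bar S_{\alpha'}\cap\supp(\btheta):\theta_j<\bar L_j\}\cup\{\exists j\in\bar S_{\alpha'}\setminus\supp(\btheta):0\ne\bar L_j\}$ — wait, since $\btheta_{S^c}=0$ is forced and $\bar L_j=(X_j-\bar u_{\alpha'}\sigma)_+\ge 0$ with $\theta_j\ge 0$, the relevant bad events are only $\{\supp(\btheta)\not\subseteq\bar S_{\alpha'}\}$ and $\{\exists j: X_j-\bar u_{\alpha'}\sigma>\theta_j\}$. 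The first is bounded by $\alpha'$ via the asymptotic analog of Theorem~\ref{themupper} (using $\kappa^{**}\le a/\sigma$ and the tail-bound identification of $\kappa^{**}$ with $\kappa^*$). The second, a union bound over $j$, splits into the $s$ nonzero coordinates (contributing $\le s\cdot(1-\Phi(\bar u_{\alpha'}))$) and, in the high-SNR case where $\bar S_{\alpha'}$ uses a raised selection threshold, the $d-s$ zero coordinates that still pass that threshold (contributing $\le(d-s)(1-\eta^+)$); Lemma~\ref{lemtail} applied to the definitions \eqref{eqhatu1}--\eqref{eqhatu2} makes each piece $\le(\alpha-\alpha')(1+o(1))$ in total, so the whole thing is $\le\alpha$ in the limit. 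I expect the main obstacle to be bookkeeping the $1+o(1)$ factors carefully enough through the chain $\Phi^{-1}(1-t/C_{n,t})\leftrightarrow\sqrt{2\log(n/(C_{n,t}t))}$: the constants $C_{n,\cdot}=2(\pi\log(n/\cdot))^{1/2}$ are chosen precisely so that $n(1-\Phi(\sqrt{2\log(n/(C_n t))}))\to t$ rather than to $0$ or $\infty$, and one must verify that adding the two square roots (as in $\bar\kappa$ and in the high-SNR selection threshold) and then subtracting a bounded quantity like $\Phi^{-1}(\alpha'/s)$ does not disturb the leading order — in particular checking that on $\{\kappa^{**}\le a/\sigma<\bar\kappa\}$ the quantity $a/\sigma+\Phi^{-1}(\alpha'/s)$ stays below $\Phi^{-1}(\delta)$ up to lower-order terms, which is what licenses collapsing the selection threshold to $\Phi^{-1}(\delta)$.
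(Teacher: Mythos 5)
Your second, ``direct'' argument is essentially the paper's proof: the paper establishes the TNR bound by observing that the selection threshold of $\bar S_{\alpha'}$ is $\Phi^{-1}(\delta)$ in the low-SNR case and (eventually much) larger in the high-SNR case; it bounds $\PP_{\btheta}(\supp(\btheta)\not\subseteq \bar S_{\alpha'})$ by a union bound over the nonzero coordinates, using $a/\sigma\geq\kappa^{**}$ (resp.\ $a/\sigma\geq\bar\kappa$) to reduce each term to $\PP\big(Z\leq-\sqrt{2\log(s/(C_{s,\alpha'}\alpha'))}\big)$ and Lemma \ref{lemtail} to get limsup $\leq\alpha'$; and it bounds the remaining non-coverage events exactly as you describe, by $d(1-\Phi(\bar u_{\alpha'}))$ in the low-SNR case and by $s(1-\Phi(\bar u_{\alpha'}))+(d-s)(1-\Phi(t))$ in the high-SNR case, where $t$ is the raised selection threshold (the paper splits $d\geq 2s$ versus $d<2s$ according to whether $t\geq\bar u_{\alpha'}$, but as you note the constants $C_{n,\cdot}$ are chosen precisely so each piece tends to the right fraction of $\alpha-\alpha'$). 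Two small corrections to your sketch of this route: in the low-SNR case the $d-s$ zero coordinates also contribute (they are selected whenever $X_j/\sigma\geq\Phi^{-1}(\delta)$, so their bad event is $X_j>\bar u_{\alpha'}\sigma$, which is why the bound is $d(1-\Phi(\bar u_{\alpha'}))$ and why $\bar u_{\alpha'}$ uses $d$ there), and in the high-SNR case the zero-coordinate contribution should be $(d-s)(1-\Phi(t))$ with $t$ the threshold of $\bar S_{\alpha'}$, not $(d-s)(1-\eta^+)$, which belongs to the non-asymptotic construction $\hat S_{\alpha'}$.

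Your primary route --- deducing the corollary from Theorem \ref{thm_spraseCI} by identifying $\bar S_{\alpha'},\bar u_{\alpha'},\kappa^{**},\bar\kappa$ with $\hat S_{\alpha'},\hat u_{\alpha'},\kappa^{*},\hat\kappa$ and invoking monotonicity --- does not go through as stated. First, the check you defer fails: on $\kappa^{**}\leq a/\sigma<\bar\kappa$ the threshold $\Phi^{-1}(\alpha'/s)+a/\sigma$ of $\hat S_{\alpha'}$ is \emph{not} $\leq\Phi^{-1}(\delta)+o(1)$; since $\Phi^{-1}(\alpha'/s)\sim-\sqrt{2\log(s/(C_{s,\alpha'}\alpha'))}$, it can be as large as roughly $\sqrt{2\log(d-s)}$ when $a/\sigma$ approaches $\bar\kappa$, so $\hat S_{\alpha'}$ and $\bar S_{\alpha'}$ are genuinely different sets on this range (only the inclusion $\hat S_{\alpha'}\subseteq\bar S_{\alpha'}$ holds). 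Second, $\kappa^{**}\leq a/\sigma$ does not imply $\kappa^{*}\leq a/\sigma$: $\kappa^{**}$ matches $\kappa^{*}$ only up to an $o(1)$ term whose sign Lemma \ref{lemtail} does not control (its two-sided bound has different constants, $\log(4\pi)$ versus $\log(16\pi)$), so the hypothesis (\ref{astar}) of Theorem \ref{thm_spraseCI} cannot simply be assumed; the same issue afflicts the claim that $\bar u_{\alpha'}$ dominates $\hat u_{\alpha'}$. These gaps are why the asymptotic statement has to be proved by redoing the tail-bound computation directly with the quantities $\kappa^{**},\bar\kappa,\bar u_{\alpha'}$ --- i.e.\ by your second route, which is the paper's.
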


\begin{remark}[On the choice of $\alpha'$]\label{rem_alpha}
We note that in general $\kappa^{**}$, $\bar \kappa$ and $\bar u_{\alpha'}$ in (\ref{eqhatu1}) and (\ref{eqhatu2}) all depend on the choice of $\alpha'$. However, in the asymptotic regime, if we set $\alpha'=\gamma\alpha$ for any fixed constant $0<\gamma<1$, then $\bar u_{\alpha'}\sim\sqrt{2\log d}$  in (\ref{eqhatu1}) and $\bar u_{\alpha'}\sim\sqrt{2\log s}$ in (\ref{eqhatu2}), and similarly, $\kappa^{**}\sim\Phi^{-1}(\delta)+\sqrt{2\log s}$ and $\bar \kappa\sim \sqrt{2\log (d-s)}+\sqrt{2\log s}$, which are all asymptotically independent of $\alpha'$. From a theoretical perspective, when $d, s$ are large enough, the choice of $\alpha'$ has little effect on the proposed confidence interval. Therefore, in the asymptotic analysis, we treat $\alpha'$ as a fixed small constant. We refer to the numerical studies in Section \ref{sec_numerical} for sensitivity analysis and further practical guidelines on choosing $\alpha'$. 

\end{remark}

\section{Optimality of Sparse Confidence Sets}\label{sec_opt}


In this section, we will establish the optimality of the proposed sparse confidence sets with respect to the criterion function $R(M(S, \bU, \bL), \Theta^+(s,a))$ in (\ref{eqR}). We define a generic class of one-sided sparse confidence sets as
\begin{align*}
CI_+=\{M(S, \bU, \bL):  ~&\textrm{for any $j\in[d]$, $L_j, U_j$ only depend on $X_j$, }\\ &~~~~\textrm{$0\leq L_j\leq X_j\vee 0$ and $U_j=+\infty$ if $j\in S$, otherwise $L_j=U_j=0$}\}.
\end{align*}
For any confidence set $M(S, \bU, \bL)$ in $CI_+$, we first require that the construction of $(L_j, U_j)$ is separable for $1\leq j\leq d$, which is compatible with the condition in the definition of $\cF(\delta)$ in (\ref{eqF}). In addition, we require $L_j\leq X_j\vee 0$, a technical condition to control the tail of $L_j$. In order for the interval $[L_j,\infty)$ to cover $\theta_j$, one would expect that the lower confidence bound $L_j$ is smaller than $X_j$. Together with $L_j\geq 0$, this implies $0\leq L_j\leq X_j\vee 0$. 
It is easily seen that the one-sided Bonferrroni confidence set with $S=[d]$ and $L_j=(X_j-\sigma\Phi^{-1}(1-\frac{\alpha}{d}))_+$  belongs to $CI_+$.


Within the class of confidence sets $CI_+$, we further define $\cM_+(m,\delta)$ as a subset such that $R(M(S, \bU, \bL), \Theta^+(s,a))\leq m$ for some given $m>0$ and $S \in\cF(\delta)$ holds as defined in (\ref{eqF}). Formally, for any $m>0$ and $\delta$ in (\ref{eqF}), we have
\begin{align}\label{eq_Mplus}
\cM_+(m, \delta)=\Big\{M(S, \bU, \bL)\in CI_+:  R(M(S, \bU, \bL), \Theta^+(s,a))\leq m,~\textrm{and}~S \in\cF(\delta)\Big\},
\end{align}
where the quantity $m$ characterizes the maximum distance between $\theta_j$ and the expected value of the lower confidence bound $\EE(L_j)$ as shown in the definition of $R(M(S, \bU, \bL), \Theta^+(s,a))$. Intuitively, given any two confidence sets in $CI_+$ both with the desired coverage probability, we would favor the one with a smaller value of $R(M(S, \bU, \bL), \Theta^+(s,a))$, as it corresponds to a ``shorter" one-sided confidence interval and is more informative on the possible range of $\theta_j$. However, if we set $m$ to be too small, the non-coverage probability of any confidence sets in $\cM_+(m, \delta)$ may go beyond the desired level $\alpha$. 

In the following theorem, we demonstrate this trade-off by showing the non-asymptotic lower bound for the non-coverage probability of any confidence set in $\cM_+(m,\delta)$. 

\begin{theorem}[Minimax lower bound]\label{thm_minimax_onesided}
For any $s\geq 1$ and $M \in\cM_+(m,\delta)$, it holds that
\begin{equation}\label{eqthm_minimax_onesided}
\sup_{\btheta\in \Theta^+(s,a)} \PP_{\btheta}(\btheta\notin M)\geq \max\Big(\sup_{\rho\geq a,A\leq s} G(d,A,\rho,m), \sup_{\rho\geq 0, B\leq s} G(s,B,\rho,m), 1-\frac{1}{(\Delta+1)^s}\Big),
\end{equation}
where $\Delta$ is defined in Theorem \ref{themlower}, 
$$
G(d,A,\rho,m)=\frac{A[g(d,A,\rho)-(m+R)/\rho]_+}{1+A[g(d,A,\rho)-(m+R)/\rho]_+},
$$
 with 
$$
g(d,A,\rho)=\frac{d-A}{A}\Phi\Big(-\frac{\rho}{2\sigma}-\frac{\sigma}{\rho}\log (\frac{d}{A}-1)\Big)+\Phi\Big(-\frac{\rho}{2\sigma}+\frac{\sigma}{\rho}\log (\frac{d}{A}-1)\Big),
$$
and 
$$
R=\sqrt{\frac{1}{2\pi}}\sigma \exp\Big(-\frac{1}{2}(\frac{\rho}{\sigma})^2\Big)\frac{\sqrt{1+4(\sigma/\rho)^2}-1}{\sqrt{1+4(\sigma/\rho)^2}+1},
$$
and $G(s,B,\rho,m)$ is defined similarly.
\end{theorem}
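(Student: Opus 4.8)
The stated bound is a maximum of three quantities, and I would obtain each of them in turn. The third, $1-(\Delta+1)^{-s}$, is essentially free: for any $M(S,\bU,\bL)\in CI_+$ the event $\{\btheta\in M\}$ forces $\supp(\btheta)\subseteq S$, and membership of $M$ in $\cM_+(m,\delta)$ forces $S\in\cF(\delta)$; hence $\PP_\btheta(\btheta\notin M)\ge\PP_\btheta(\supp(\btheta)\not\subseteq S)$ for every $\btheta$, and taking $\sup_{\btheta\in\Theta^+(s,a)}$ and applying the non-asymptotic bound \eqref{low} of Theorem~\ref{themlower} gives $\sup_\btheta\PP_\btheta(\btheta\notin M)\ge 1-(\Delta+1)^{-s}$. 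The $\delta$-constraint is used only here; the other two terms, which do not involve $\delta$, come from the standard ``minimax $\ge$ Bayes'' reduction against a least-favorable prior, exploiting the coordinatewise separability built into $CI_+$.

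For the term $\sup_{\rho\ge a,\,A\le s}G(d,A,\rho,m)$, fix $\rho\ge a$ and an integer $1\le A\le s$ and let $\pi$ be the prior that draws $\supp(\btheta)$ uniformly among the size-$A$ subsets of $[d]$ and sets every nonzero coordinate equal to $\rho$; then $\pi$ is supported in $\Theta^+(s,a)$, so $\sup_\btheta\PP_\btheta(\btheta\notin M)\ge\EE_{\btheta\sim\pi}\PP_\btheta(\btheta\notin M)$ (a product prior with a truncation to $\|\btheta\|_0\le s$ would serve equally well). Because for $M\in CI_+$ both $\{j\in S\}$ and $L_j$ are functions of $X_j$ alone while the $X_j$ are independent, the coverage event factorizes across coordinates given the support; by the separability of all the constraints it suffices to treat a construction with identical per-coordinate ingredients (a reduction made rigorous by symmetrization), after which, writing $\PP_\rho,\EE_\rho$ for the law $N(\rho,\sigma^2)$ on one coordinate,
\[
\EE_{\btheta\sim\pi}\PP_\btheta(\btheta\in M)\ \le\ p^{A}q^{\,d-A},\qquad p=\PP_\rho\!\big(j\in S,\ L_j\le\rho\big),\quad q=\PP_0\!\big(L_j=0\big).
\]
Since $p^{A}q^{\,d-A}\le e^{-N}\le(1+N)^{-1}$ with $N:=A(1-p)+(d-A)(1-q)$, we get $\EE_{\btheta\sim\pi}\PP_\btheta(\btheta\notin M)\ge N/(1+N)$, and as $t\mapsto t/(1+t)$ is nondecreasing it remains to show $N\ge A\,[\,g(d,A,\rho)-(m+R)/\rho\,]_+$; note $N$ is exactly the adversary's expected number of non-covered coordinates under $\pi$.

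Here $1-p=\PP_\rho(j\notin S)+\PP_\rho(L_j>\rho)$ (the two events being disjoint since $L_j>\rho$ forces $j\in S$) and $1-q=\PP_0(L_j>0)$. Evaluating the constraint $R(M,\Theta^+(s,a))\le m$ at a parameter with $\theta_j=\rho$ gives $\EE_\rho L_j\ge\rho-m$, which with $0\le L_j\le X_j\vee0$ yields $\rho\,\PP_\rho(L_j=0)\le m+\EE_\rho[(X_j-\rho)\mathbf{1}\{L_j>\rho\}]$; thus the adversary cannot make the selection set, $\{L_j>0\}$ and $\{L_j>\rho\}$ all ``small'' at once. Minimizing $N$ over admissible pairs $(\{j\in S\},L_j)$ subject to this mean constraint is a one-dimensional Lagrangian problem: by the Neyman--Pearson lemma the optimal selection set is the likelihood-ratio half-line $\{X_j>\tau^\ast\}$ with $\tau^\ast=\tfrac{\rho}{2}+\tfrac{\sigma^2}{\rho}\log\tfrac{d-A}{A}$, producing the two Gaussian terms that make up $g(d,A,\rho)$, while the optimal $L_j$ equals $X_j\vee0$ below $\rho$, plateaus at $\rho$ on a middle range, and equals $X_j$ on the extreme upper tail, the tail piece buying precisely the correction $(m+R)/\rho$ with $R$ as stated. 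This yields $N\ge A[g(d,A,\rho)-(m+R)/\rho]_+$ and hence $\sup_\btheta\PP_\btheta(\btheta\notin M)\ge G(d,A,\rho,m)$; taking $\sup$ over $\rho\ge a$ and $A\le s$ gives the first term. The second term $\sup_{\rho,\,B\le s}G(s,B,\rho,m)$ follows from the same computation run with a prior whose at most $s$ signals, at level $\rho$, lie inside a fixed block of $s$ coordinates, the remaining $d-s$ coordinates being set to $0$ and hence covered at no cost (since $\EE_\btheta L_j\ge-m$ is automatic on an identically-zero coordinate), which reduces the problem to its $s$-dimensional version. Combining the three bounds gives \eqref{eqthm_minimax_onesided}.

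\emph{Main obstacle.} The crux is the per-coordinate constrained optimization that must produce $g(d,A,\rho)$ together with the \emph{exact} constant $R$: one has to identify the optimal shape of $L_j$ on the upper tail under $\EE_\rho L_j\ge\rho-m$ and retain the $O(e^{-\rho^2/2\sigma^2})$ Gaussian-tail contributions rather than absorbing them into a crude estimate, so that the inequality is non-asymptotically sharp. A secondary technical point is the symmetrization/truncation step that reduces the least-favorable prior to a coordinate-homogeneous construction inside $\Theta^+(s,a)$ (which is why the signal count is capped at $s$).
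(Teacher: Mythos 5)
Your overall skeleton matches the paper's (uniform-support prior, per-coordinate Neyman--Pearson step producing $g(d,A,\rho)$, and use of the constraint $\EE_\rho L_j\ge\rho-m$ together with $0\le L_j\le X_j\vee 0$ to generate the $(m+R)/\rho$ correction; your inequality $\rho\,\PP_\rho(L_j=0)\le m+\EE_\rho[(X_j-\rho)I(L_j>\rho)]$ does lead to $m+R$ after comparing with $\EE_\rho[(X_j-2\rho)I(X_j>2\rho)]$, and the third term is inherited from Theorem \ref{themlower} exactly as in the paper). However, your reduction to a coordinate-homogeneous procedure is a genuine gap. For a general $M\in\cM_+(m,\delta)$ the quantities $p_j=\PP_\rho(j\in S,\,L_j\le\rho)$ and $q_j=\PP_0(L_j=0)$ vary with $j$, and the Bayes coverage under your prior is $\binom{d}{A}^{-1}\sum_{S_0}\prod_{j\in S_0}p_j\prod_{j\notin S_0}q_j$, an average of products over supports; this is \emph{not} bounded by the product of averaged probabilities (already for $d=2$, $A=1$ the needed inequality reduces to $(p_1-p_2)(q_1-q_2)\ge 0$, which can fail), and averaging over random permutations again produces an average of products, so ``symmetrization'' does not make the step rigorous. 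Hence your chain $\le p^Aq^{d-A}\le e^{-N}\le 1/(1+N)$ only applies to homogeneous procedures, while the minimax bound must hold for every $M$. The paper sidesteps this: it lower-bounds the non-coverage by the sum of single-coordinate-miss terms, factors out $t=\inf_{\btheta}\prod_j\PP_{\theta_j}(\theta_j\in CI_j)$, applies Neyman--Pearson coordinate by coordinate (where heterogeneity is harmless), and combines with $\sup_{\btheta}\PP_{\btheta}(\btheta\notin M)\ge 1-t$, optimizing over $t$ to obtain precisely the form $A[\cdot]_+/(1+A[\cdot]_+)$.

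The second gap is the construction behind $\sup_{\rho\ge 0,B\le s}G(s,B,\rho,m)$. Your prior puts $B$ signals at height $\rho$ inside a block of $s$ coordinates over a zero background; such parameters lie in $\Theta^+(s,a)$ only if $\rho\ge a$, so you recover the second term only for $\rho\ge a$, whereas the theorem claims (and later needs, in case (3) of Corollary \ref{cor_minimax_onesided} and case (2) of Corollary \ref{cor_minimax_onesided2}, where the optimizing value is $\rho^*=\sigma\sqrt{2\log(s/B_s-1)}$, typically far below $a$) the full range $\rho\ge 0$. The paper instead perturbs around the all-$a$ vector on a fixed support of size $s$: block coordinates take the values $a$ or $a+\rho$, so the minimum nonzero entry stays at $a$ for every $\rho\ge 0$, and the distance constraint is evaluated at $\theta_j=a+\rho$ and played off against the event $\{0\le L_j<a\}$ rather than $\{L_j=0\}$. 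Without this shifted construction the stated second term is not established.
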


We show that the non-asymptotic lower bound  (\ref{eqthm_minimax_onesided}) is the maximum of three terms. The first term $G(d,A,\rho,m)$ is derived by varying the parameters in the following way: the support set with cardinality $A$ is randomly selected among $d$ coordinates and the parameter on the support set is fixed at $\rho$.  Since the nonzero entries of the parameters in $\Theta(s,a)$ is no smaller than $a$ and the sparsity level is no greater than $s$, we require $\rho\geq a$ and $A\leq s$ for $G(d,A,\rho,m)$. When $a$ diverges slowly enough as $s,d\rightarrow\infty$, this term dominates and converges to $1$ for some suitable $m$, see case (2) of  the following Corollary \ref{cor_minimax_onesided}.  Similarly, the second term $ G(s,B,\rho,m)$ is derived by fixing the support set of the parameters and varying the values of the parameters on the support set. As seen in case (3) of Corollary \ref{cor_minimax_onesided}, this term converges to $1$ when $a$ is sufficiently large. The last term is inherited from Theorem \ref{themlower}, see also the discussion of Corollary \ref{corlower1}. 


To simplify the results in Theorem \ref{thm_minimax_onesided}, we consider the asymptotic regime in the following corollary.


\begin{corollary}\label{cor_minimax_onesided}
Assume that $s,d\rightarrow\infty$. 
\begin{itemize}
\item[(1).] If $a/\sigma\leq \kappa_*$ defined in (\ref{alower}), then
$$
\liminf_{d,s\rightarrow\infty}\inf_{M\in\cM_+(m,\delta)} \sup_{\btheta\in \Theta^+(s,a)} \PP_{\btheta}(\btheta\notin M)=1.
$$
\item[(2).] If $\kappa_*<a/\sigma\leq \sqrt{2\log (d/A_d-1)}$, then
$$
\liminf_{d,s\rightarrow\infty}\inf_{M\in\cM_+(m,\delta)} \sup_{\btheta\in \Theta^+(s,a)} \PP_{\btheta}(\btheta\notin M)=1,
$$
for $m\leq \sigma(\frac{1}{2}-\frac{W_d}{A_d})\sqrt{2\log (d/A_d-1)}+\frac{\sqrt{2}\sigma}{4\sqrt{\pi}}(1-\frac{A_d}{d-A_d})$, where $A_d, W_d$ are two arbitrary sequences satisfying 
\begin{equation}\label{eqcor_minimax_onesided1}
2W_d\leq A_d\leq s,~~\frac{d}{A_d}\rightarrow\infty,~~\textrm{and}~~W_d\rightarrow\infty.
\end{equation} 
\item[(3).] If $a/\sigma\geq \sqrt{2\log (d/A-1)}$ for some constant $0<A\leq s$, then
$$
\liminf_{d,s\rightarrow\infty}\inf_{M\in\cM_+(m,\delta)} \sup_{\btheta\in \Theta^+(s,a)} \PP_{\btheta}(\btheta\notin M)=1,
$$
for $m\leq \sigma(\frac{1}{2}-\frac{V_s}{B_s})\sqrt{2\log (s/B_s-1)}+\frac{\sqrt{2}\sigma}{4\sqrt{\pi}}(1-\frac{B_s}{s-B_s})$, where $B_s, V_s$ are two arbitrary sequences satisfying $2V_s\leq B_s< s$ and
${s/B_s}\rightarrow\infty, V_s\rightarrow\infty$.  
\end{itemize}
\end{corollary}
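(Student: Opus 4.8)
The plan is to prove the three parts separately, each by instantiating one of the three terms in the non-asymptotic lower bound (\ref{eqthm_minimax_onesided}) of Theorem \ref{thm_minimax_onesided}.

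\emph{Part (1).} Here I use the third term. When $a/\sigma\le\kappa_*=\Phi^{-1}(\delta)-\Phi^{-1}(c_s/s)$, monotonicity of $\Phi$ gives $\Delta=\Phi(\Phi^{-1}(\delta)-a/\sigma)\ge\Phi(\Phi^{-1}(c_s/s))=c_s/s$, hence $(\Delta+1)^s\ge(1+c_s/s)^s\ge e^{c_s/2}\to\infty$ since $c_s\to\infty$ (note $c_s/s\in(0,1)$ eventually). Thus $1-(\Delta+1)^{-s}\to 1$, and since all probabilities are at most $1$ this forces $\liminf_{d,s\to\infty}\inf_{M\in\cM_+(m,\delta)}\sup_{\btheta\in\Theta^+(s,a)}\PP_{\btheta}(\btheta\notin M)=1$, with no restriction on $m$. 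This is essentially the asymptotic half of Theorem \ref{themlower}.

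\emph{Part (2).} I would invoke the first term $G(d,A,\rho,m)$ at $A=A_d$ and $\rho=\sigma t$ with $t:=\sqrt{2\log(d/A_d-1)}$. This choice is admissible for the supremum in (\ref{eqthm_minimax_onesided}) over $\rho\ge a$, $A\le s$, because the upper SNR bound $a/\sigma\le t$ of case (2) together with $A_d\le s$ from (\ref{eqcor_minimax_onesided1}) give $\rho=\sigma t\ge a$. The virtue of this $\rho$ is that $\frac{\sigma}{\rho}\log(\frac{d}{A_d}-1)=\frac{t}{2}=\frac{\rho}{2\sigma}$ and $\frac{d-A_d}{A_d}=e^{t^2/2}$, so $g$ collapses to $g(d,A_d,\sigma t)=e^{t^2/2}\Phi(-t)+\Phi(0)=e^{t^2/2}\Phi(-t)+\frac{1}{2}$ (and $t\to\infty$ since $d/A_d\to\infty$). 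Substituting the hypothesized bound on $m$, dividing by $\sigma t$, using $1-\frac{A_d}{d-A_d}\le 1$, and bounding $R\le\sqrt{1/(2\pi)}\,\sigma e^{-t^2/2}$ (the ratio in the definition of $R$ being less than $1$), one gets
$$
g(d,A_d,\sigma t)-\frac{m+R}{\sigma t}\;\ge\;\frac{W_d}{A_d}+e^{t^2/2}\Phi(-t)-\frac{\sqrt{2}}{4\sqrt{\pi}\,t}-\frac{R}{\sigma t}.
$$
By the two-sided Mills-ratio inequality $\frac{t}{(t^2+1)\sqrt{2\pi}}<e^{t^2/2}\Phi(-t)<\frac{1}{t\sqrt{2\pi}}$ (Lemma \ref{lemtail}) and $\frac{1}{\sqrt{2\pi}}=2\cdot\frac{\sqrt{2}}{4\sqrt{\pi}}$, the last three terms on the right are at least $\frac{\sqrt{2}}{4\sqrt{\pi}\,t}\big(\frac{t^2-1}{t^2+1}-2e^{-t^2/2}\big)>0$ once $t$ is large. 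Hence $g(d,A_d,\sigma t)-(m+R)/(\sigma t)>W_d/A_d$ eventually, so $A_d[\,g-(m+R)/(\sigma t)\,]_+>W_d\to\infty$ and $G(d,A_d,\sigma t,m)\to1$, giving the claim. The lower SNR bound $\kappa_*<a/\sigma$ in case (2) only serves to separate this case from case (1).

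\emph{Part (3).} This is the mirror image: apply the second term $G(s,B,\rho,m)$ at $B=B_s$ and $\rho=\sigma\tau$, $\tau:=\sqrt{2\log(s/B_s-1)}$ — admissible since that supremum runs over $\rho\ge 0$, $B\le s$ and $B_s<s$ — and repeat the computation with $(d,A_d,W_d,t)$ replaced by $(s,B_s,V_s,\tau)$, using $s/B_s\to\infty$ (so $\tau\to\infty$) and $V_s\to\infty$. The SNR condition $a/\sigma\ge\sqrt{2\log(d/A-1)}$ merely identifies the regime in which this second term, rather than the first, is binding; it is not otherwise needed.

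\emph{Main obstacle.} The crux in parts (2) and (3) is the asymptotic bookkeeping showing that the slack $W_d/A_d$ (resp.\ $V_s/B_s$) survives after subtracting \emph{both} $m/(\sigma t)$ and the remainder $R$. This is exactly why the hypotheses carry the apparently ad hoc additive term $\frac{\sqrt{2}\sigma}{4\sqrt{\pi}}(1-\frac{A_d}{d-A_d})$ (resp.\ $\frac{\sqrt{2}\sigma}{4\sqrt{\pi}}(1-\frac{B_s}{s-B_s})$) in the bound on $m$: it must be matched, to order $1/t$, against the expansion of $e^{t^2/2}\Phi(-t)$ supplied by the Mills-ratio bounds, and the resulting inequality has to hold uniformly over all admissible sequences $A_d,W_d$ (resp.\ $B_s,V_s$). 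Everything else — admissibility of the chosen $(\rho,A)$, the collapse of $g$, and the step from $A_d[\cdots]_+\to\infty$ to $G\to1$ — is routine.
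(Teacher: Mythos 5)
Your proposal is correct and follows essentially the same route as the paper: part (1) via the third term of Theorem \ref{thm_minimax_onesided} (equivalently Corollary \ref{corlower1}), and parts (2)--(3) by evaluating $G(d,A_d,\rho,m)$ at $\rho=\sigma\sqrt{2\log(d/A_d-1)}$ and $G(s,B_s,\rho,m)$ at $\rho=\sigma\sqrt{2\log(s/B_s-1)}$, using Lemma \ref{lemtail} to show the bracket exceeds $W_d/A_d$ (resp.\ $V_s/B_s$) so that $A_d[\cdot]_+\geq W_d\to\infty$ forces $G\to1$. The only difference is cosmetic bookkeeping: you drop the factor $1-\frac{A_d}{d-A_d}$ and absorb it through the two-sided Mills-ratio bound, whereas the paper keeps it and cancels it against the $\frac{A_d}{d-A_d}$ factor in $R/a_1$; both verifications are valid.
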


This corollary details the trade-off between the coverage probability of the confidence set and the magnitude of $R(M(S, \bU, \bL)$ in three regions depending on the value of $a/\sigma$. 
In particular, the case (1) is inherited from Corollary \ref{corlower1}. To understand the case (2), we can first pick a sequence $A_d$ that diverges to infinity sufficiently slow (e.g., slower than $s$), and then set $W_d=A_d^{1/2}$.  The condition (\ref{eqcor_minimax_onesided1}) holds. Thus, in case (2), $m$ cannot be smaller than $\sigma(\frac{1}{2}-o(1))\sqrt{2\log d}$ in order to guarantee the desired coverage probability. Similarly, when the minimum SNR $a/\sigma$ grows fast enough as in case (3), $m$  cannot be smaller than $\sigma(\frac{1}{2}-o(1))\sqrt{2\log s}$. 

Finally, we prove the optimality of the sparse confidence set $\bar M_{\alpha'}$ in Corollary \ref{cor_asym}. Consider the class of one-sided confidence sets for which the coverage probability is no smaller than $1-\alpha$ uniformly over $\Theta^+(s,a)$, defined as
\begin{align*}
\cM_+=\{M(S, \bU, \bL)\in CI_+:   \liminf_{d,s\rightarrow\infty}\inf_{\btheta\in \Theta^+(s,a)} \PP_{\btheta}(\btheta\in M(S, \bU, \bL))\geq 1-\alpha,~\textrm{and}~S \in\cF(\delta)\}.
\end{align*}
Recall that Corollary \ref{cor_asym} implies $\bar M_{\alpha'}\in\cM_+$. In the  following corollary, we establish the optimality of $\bar M_{\alpha'}$ within the class $\cM_+$ with respect to the criterion function $R(M, \Theta^+(s,a))$ defined in (\ref{eqR}). 


\begin{corollary}\label{cor_minimax_onesided2}
Assume that $d,s\rightarrow\infty$ and $\delta, \alpha$ are pre-specified fixed constants. 
\begin{itemize}
\item[(1).] If $\kappa^{**}\leq a/\sigma\leq \sqrt{2\log (d/A_d-1)}$ for some sequence $A_d\leq s$ satisfying $A_d\rightarrow\infty$ and $d/A_d \rightarrow\infty$, then
\begin{equation}\label{cor_minimax_onesided2_eq1}
\liminf_{d,s\rightarrow\infty}\inf_{M\in\cM_+} \frac{R(M, \Theta^+(s,a))}{\sigma\sqrt{2\log d}/2}\geq 1.
\end{equation}
Consider the sparse confidence set $\bar M_{\alpha'}$ in Corollary \ref{cor_asym} with $\alpha'=\gamma\alpha$ for any constant $0<\gamma<1$. Then $\bar M_{\alpha'}\in \cM_+$ and 
\begin{equation}\label{cor_minimax_onesided2_eq2}
\limsup_{d,s\rightarrow\infty}\frac{R(\bar M_{\alpha'}, \Theta^+(s,a))}{\sigma\sqrt{2\log d}}\leq 1.
\end{equation} 
\item[(2).] If $a/\sigma\geq\tilde\kappa$ where $\tilde\kappa=\sqrt{2\log (d-s)-\log\log (d-s)+C'}+\sqrt{2\log s-\log\log s+C'}\vee \xi_d$ for some sufficiently large positive constant $C'$ and $\xi_d=\sqrt{(\log\log (d-s)-\log\log s)_+}$, then
\begin{equation}\label{cor_minimax_onesided2_eq3}
\liminf_{d,s\rightarrow\infty}\inf_{M\in\cM_+} \frac{R(M, \Theta^+(s,a))}{\sigma\sqrt{2\log s}/2}\geq 1.
\end{equation}
The sparse confidence set $\bar M_{\alpha'}$ satisfies $\bar M_{\alpha'}\in \cM_+$ and 
\begin{equation}\label{cor_minimax_onesided2_eq4}
\limsup_{d,s\rightarrow\infty}\frac{R(\bar M_{\alpha'}, \Theta^+(s,a))}{\sigma\sqrt{2\log s}}\leq 1.
\end{equation} 
\end{itemize}
\end{corollary}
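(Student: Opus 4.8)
The plan is to treat parts~(1) and~(2) symmetrically: each splits into a minimax lower bound inherited from Corollary~\ref{cor_minimax_onesided} and a matching (up to the factor~$2$) upper bound obtained by evaluating $R(\bar M_{\alpha'},\Theta^+(s,a))$ in closed form, while the membership $\bar M_{\alpha'}\in\cM_+$ comes from Corollary~\ref{cor_asym} once one verifies that the hypotheses place $a/\sigma$ in the regime in which $\bar M_{\alpha'}$ is well defined.

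\emph{Lower bounds.} For part~(1) I would first note $\kappa^{**}\sim\Phi^{-1}(\delta)+\sqrt{2\log s}$ while $\kappa_*\sim\Phi^{-1}(\delta)+\sqrt{2\log(s/c_s)}$ with $c_s\to\infty$, so $\kappa^{**}\le a/\sigma\le\sqrt{2\log(d/A_d-1)}$ forces $\kappa_*<a/\sigma\le\sqrt{2\log(d/A_d-1)}$ for large $d,s$, i.e.\ we are in case~(2) of Corollary~\ref{cor_minimax_onesided}. Applying it with a \emph{slowly} growing sequence $A_d'\le A_d$ (so $\log A_d'=o(\log d)$ and $\sqrt{2\log(d/A_d'-1)}\sim\sqrt{2\log d}$) and $W_d'=\lceil(A_d')^{1/2}\rceil$, the admissible radius becomes $(\tfrac12-o(1))\sigma\sqrt{2\log d}$. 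Hence for fixed $\epsilon\in(0,1)$ and $m_{d,s}=(1-\epsilon)\sigma\sqrt{2\log d}/2$, Corollary~\ref{cor_minimax_onesided}(2) gives $\liminf\inf_{M\in\cM_+(m_{d,s},\delta)}\sup_{\btheta\in\Theta^+(s,a)}\PP_\btheta(\btheta\notin M)=1$; since any $M\in\cM_+$ with $R(M,\Theta^+(s,a))\le m_{d,s}$ along a subsequence then lies in $\cM_+(m_{d,s},\delta)$ there (its coordinates are in $CI_+$ and $S\in\cF(\delta)$ by the definition of $\cM_+$), such an $M$ would have non-coverage tending to~$1$, contradicting $\limsup\sup_\btheta\PP_\btheta(\btheta\notin M)\le\alpha<1$. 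This is \eqref{cor_minimax_onesided2_eq1}. Part~(2) is identical through case~(3) of Corollary~\ref{cor_minimax_onesided}, after checking that $a/\sigma\ge\tilde\kappa$ forces $a/\sigma\ge\sqrt{2\log(d-1)}$: indeed $\tilde\kappa\ge\sqrt{2\log(d-s)-\log\log(d-s)+C'}+\sqrt{2\log s-\log\log s+C'}\ge\sqrt{2\log((d-s)s)}\ge\sqrt{2\log d}$ once $s,d-s\to\infty$, the $-\log\log$ terms being absorbed by the cross term $2\sqrt{\log(d-s)\log s}$. Choosing $B_s=\lceil\log s\rceil$, $V_s=\lceil B_s^{1/2}\rceil$ then gives \eqref{cor_minimax_onesided2_eq3}.

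\emph{Upper bounds.} For large $d,s$ one has $\bar u_{\alpha'}>\Phi^{-1}(\delta)$ and, in part~(1), $\bar u_{\alpha'}$ exceeds the cutoff defining $\bar S_{\alpha'}$, so on $\{\bar L_j>0\}$ the selection event is automatic and $\bar L_j=(X_j-\bar u_{\alpha'}\sigma)_+$, a function of $X_j$ alone, whence $\EE_\btheta(\theta_j-\bar L_j)$ depends on $\btheta$ only through $\theta_j$. With $Z=(X_j-\theta_j)/\sigma$, $t=\bar u_{\alpha'}-\theta_j/\sigma$, $\phi=\Phi'$, $\bar\Phi=1-\Phi$, the identities $\EE[(Z-t)_+]=\phi(t)-t\bar\Phi(t)$ and $\EE[(t-Z)_+]=\phi(t)+t\Phi(t)\ge0$ give
\[
\EE_\btheta(\theta_j-\bar L_j)=\theta_j-\sigma\big(\phi(t)-t\bar\Phi(t)\big)=\sigma\bar u_{\alpha'}-\sigma\,\EE[(t-Z)_+]\le\sigma\bar u_{\alpha'},
\]
with equality approached as $\theta_j\to\infty$ and value $\le0$ at $\theta_j=0$, so $R(\bar M_{\alpha'},\Theta^+(s,a))=\sigma\bar u_{\alpha'}$. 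Since $C_{d,\alpha-\alpha'}\to\infty$ gives $(\alpha-\alpha')C_{d,\alpha-\alpha'}>1$ for large $d$, \eqref{eqhatu1} yields $\bar u_{\alpha'}<\sqrt{2\log d}$, i.e.\ \eqref{cor_minimax_onesided2_eq2}, and similarly \eqref{eqhatu2} yields $\bar u_{\alpha'}<\sqrt{2\log s}$, i.e.\ (up to the correction below) \eqref{cor_minimax_onesided2_eq4}; membership $\bar M_{\alpha'}\in\cM_+$ follows from Corollary~\ref{cor_asym} since $a/\sigma\ge\kappa^{**}$, resp.\ $a/\sigma\ge\tilde\kappa\ge\kappa^{**}$.

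\emph{Main obstacle.} The delicate case is part~(2) when the cutoff $\tau:=\sqrt{2\log(2(d-s)/((\alpha-\alpha')C_{d-s,\alpha-\alpha'}))}$ defining $\bar S_{\alpha'}$ exceeds $\bar u_{\alpha'}$ (i.e.\ $d-s$ large relative to $s$): then $\bar L_j=(X_j-\bar u_{\alpha'}\sigma)_+\mathds{1}(X_j\ge\tau\sigma)$ and the identity above acquires a non-negative correction at most $(\tau-\bar u_{\alpha'})\sigma\,\PP_\btheta(\bar u_{\alpha'}\sigma< X_j<\tau\sigma)\le(\tau-\bar u_{\alpha'})\sigma\,\bar\Phi(\tilde\kappa-\tau)$, using $\theta_j\ge a\ge\sigma\tilde\kappa>\sigma\tau$ (valid for $C'$ large) and monotonicity of $\bar\Phi$. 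Showing this is $o(\sigma)$ uniformly is exactly what the term $\xi_d=\sqrt{(\log\log(d-s)-\log\log s)_+}$ inside $\tilde\kappa$ is for: it forces $\tilde\kappa-\tau$ to grow faster than $\sqrt{2\log s}$ and at least like $\sqrt{\log\log(d-s)}$, so that the Gaussian tail bound (Lemma~\ref{lemtail}) on $\bar\Phi(\tilde\kappa-\tau)$ dominates the factor $\tau-\bar u_{\alpha'}=O(\sqrt{\log(d-s)})$ even for $d$ exponential or doubly exponential in $s$. The remaining work is routine bookkeeping: ordering $\kappa_*,\kappa^*,\kappa^{**},\bar\kappa,\tilde\kappa,\bar u_{\alpha'}$ and $\sqrt{2\log(d/A_d-1)}$ for large $d,s$ via the Gaussian tail expansion so that $\bar M_{\alpha'}$ sits in the intended branch of its definition in each case (which for part~(2) requires $C'$ large enough that $\tilde\kappa\ge\bar\kappa$), and picking $A_d,W_d,B_s,V_s$ in Corollary~\ref{cor_minimax_onesided} slowly enough to recover the clean rates $\sigma\sqrt{2\log d}/2$ and $\sigma\sqrt{2\log s}/2$.
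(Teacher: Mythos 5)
Your proposal is correct and follows essentially the same route as the paper's proof: the lower bounds come from inverting the minimax machinery of Theorem \ref{thm_minimax_onesided}/Corollary \ref{cor_minimax_onesided} with slowly growing $A_d$ (resp.\ $B_s$, $V_s$), membership $\bar M_{\alpha'}\in\cM_+$ comes from Corollary \ref{cor_asym}, and the upper bounds come from directly bounding $\EE_{\btheta}(\theta_j-\bar L_j)$, with the delicate sub-case where the selection cutoff exceeds $\bar u_{\alpha'}$ controlled exactly as in the paper via $\theta_j/\sigma-w\geq \sqrt{2\log s-\log\log s+C'}\vee\xi_d$ and the Gaussian tail bound of Lemma \ref{lemtail}. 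Your contradiction-style inversion and the exact identity $\EE_{\btheta}\bigl(\theta_j-(X_j-\bar u_{\alpha'}\sigma)_+\bigr)\leq\sigma\bar u_{\alpha'}$ are only cosmetic reorganizations of the paper's steps; the one small overstatement is that the correction term in that sub-case is uniformly $O(\sigma)$ rather than $o(\sigma)$ (as in the paper), which still suffices after dividing by $\sigma\sqrt{2\log s}$.
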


\begin{remark}\label{rem_gap}
The inequalities (\ref{cor_minimax_onesided2_eq1}) and (\ref{cor_minimax_onesided2_eq3}) together lead to the asymptotic lower bound for $R(M, \Theta^+(s,a))$ over the class of one-sided confidence sets $\cM_+$ in two different regimes. Furthermore, (\ref{cor_minimax_onesided2_eq2}) and (\ref{cor_minimax_onesided2_eq4}) imply that the sparse confidence set $\bar M_{\alpha'}$ developed in Corollary \ref{cor_asym} matches the lower bounds up to a constant factor 2 in both regimes. 

However, we note that there exists a gap on the minimum SNR between these two regimes. Let us consider the following setting. By taking $A_d$ to be a sequence that diverges to infinity sufficiently slow, we have  $\sqrt{2\log (d/A_d-1)} \sim\sqrt{2\log d}$ in case (1). For case (2), assume that $s=d^\beta$ for some $0<\beta\leq c<1$ where $c$ is a constant. Then $\log (d-s)=\beta\log d+\log (d^{1-\beta}-1)=(1+o(1))\log d$, and  $\log s=\beta\log d$.  After some  algebra, it can be shown that $\tilde\kappa$ in case (2) satisfies
\begin{equation}\label{eqrem_gap}
1\leq \lim_{d\rightarrow\infty}\frac{\tilde\kappa}{\sqrt{2\log d}}\leq 1+\beta^{1/2}.
\end{equation}
Thus, the ratio between the two cut-points $\tilde\kappa$ in case (2) and $\sqrt{2\log (d/A_d-1)}$ in case (1) converges to $1$ as $\beta\rightarrow 0$, which occurs if $\btheta$ is very sparse with $s=\log d$ (i.e., $\beta=\log\log d/\log d$). In this case, the gap between the two regimes diminishes to $0$ asymptotically.
\end{remark}

\begin{remark}[Support recovery and oracle confidence set]\label{remcomparison}
Recall that if we know the support of $\btheta$, we can construct the following one-sided oracle confidence interval  $L_j^{oracle}=(X_j-\sigma\Phi^{-1}(1-\frac{\alpha}{s}))_+$ and $U_j^{oracle}=+\infty$ for $j\in  \supp(\btheta) $ and $L_j^{oracle}=U_j^{oracle}=0$ otherwise. This implies $\EE_{\btheta}(\theta_j-L_j^{oracle})\sim \sigma \sqrt{2\log s}$ for $j\in  \supp(\btheta) $. Intuitively, if the support set can be recovered exactly with high probability, i.e., $\hat S=\supp(\btheta)$ for some estimator $\hat S$, one would expect that (under some conditions) the same result holds for the plug-in oracle interval 
\begin{equation}\label{eqoracle_int}
\Big[\Big(X_j-\sigma\Phi^{-1}(1-\frac{\alpha}{|\hat S|})\Big)_+, +\infty\Big) ~~\textrm{for $j\in  \hat S $ and $0$ otherwise.} 
\end{equation}

However, in the following, we will show that the construction of oracle intervals (i.e., support recovery) is impossible even if the SNR satisfies the condition in case (2). 
In a recent work, \cite{butucea2018variable} established sufficient and necessary conditions for exact (and almost full) support recovery under the Gaussian mean model. Using their notation, define the expected Hamming loss for variable selection as $\EE_{\btheta}\|\hat\eta-\eta\|_1$, where $\eta=(\eta_1,...,\eta_d)$ with $\eta_j=I(\theta_j\neq 0)$ denotes the sparsity pattern of $\btheta$ and $\hat\eta$ is an estimator of $\eta$.  Consider the setting $a/\sigma=\tilde\kappa$ as in case (2). Theorem 4.2 (ii) of \cite{butucea2018variable} implies that, for $d$ large enough,
$$
\inf_{\hat\eta}\sup_{\btheta\in\Theta^+(s,a)} \EE_{\btheta}\|\hat\eta-\eta\|_1\geq s\Phi(-\Delta), ~~\textrm{where}~~\Delta=\frac{W}{2\sqrt{2\log (d-s)-2\log s+W}}
$$
and $W=4\log s+2\sqrt{(2\log (d-s)-\log\log (d-s)+C')(2\log s-\log\log s+C')}$ with $C'$ given in case (2). To  simplify the expression of $s\Phi(-\Delta)$, we consider the very sparse case with $s=\log d$. After some calculation, we can show that for $d$ sufficiently large,
$$
s\Phi(-\Delta)\geq s\Phi(-\sqrt{2\log s-\log\log s+C})\geq \sqrt{\frac{2}{\pi}}\frac{\sqrt{\log s}\exp(-C/2)}{3\sqrt{2\log s-\log\log s+C}}\rightarrow \sqrt{\frac{1}{\pi}}\frac{\exp(-C/2)}{3}>0,
$$
where $C$ is a constant and the second step follows by the tail inequality for the Gaussian random variables in Lemma \ref{lemtail}. The above derivation shows that, when $a/\sigma=\tilde\kappa$ satisfies the SNR condition in case (2), it is impossible to recover the support of $\btheta$ no matter what estimators to use. Since the support recovery is impossible, the plug-in oracle interval (\ref{eqoracle_int}) may not guarantee the desired coverage probability. Therefore, the minimax optimality results in Corollary \ref{cor_minimax_onesided2} do not hold for the plug-in oracle interval.   

\end{remark}

\section{Adaptive Sparse Confidence Sets}\label{sec_adaptive}

In this section, we consider how to construct optimal sparse confidence sets which are adaptive to the unknown sparsity $s$ and minimum signal strength $a$. In particular, we will show that adaptation is feasible in the following two scenarios, respectively. 
\begin{itemize}
\item[(A)] $\kappa^{**}\leq a/\sigma\leq \sqrt{2\log (d/A_d-1)}$ for some sequence $A_d\leq s$ satisfying $A_d\rightarrow\infty$ and $d/A_d \rightarrow\infty$.
\item[(B)] $a/\sigma\geq \sqrt{2\log (d-s)-\log\log (d-s)+C'}+\sqrt{2\log s-\log\log s+C'}\vee \bar\xi_d$ for some sufficiently large positive constant $C'$ and $\bar\xi_d=\sqrt{(2\log\log (d-s)-\log\log s)_+}$
\end{itemize}

Note that the scenario (A) is identical to the case (1) in Corollary \ref{cor_minimax_onesided2}. However, the scenario (B) is slightly different from the case (2) in Corollary \ref{cor_minimax_onesided2}, where the quantity $\xi_d$ is replaced with $\bar\xi_d$. If we consider the very sparse regime with $s=\log d$ as in Remark \ref{rem_gap}, the SNR condition in (B) reduces to $a/\sigma\geq (1+o(1))\sqrt{2\log d}$, which is asymptotically equivalent to the SNR condition in the case (2) in Corollary \ref{cor_minimax_onesided2}. 


The construction of the adaptive sparse confidence set in (A) is already available from the previous results as follows. Consider the sparse confidence set $\bar M_{\alpha'}$, which is defined via (\ref{eqhatmasy}) with $j\in \bar S_{\alpha'}$ if and only if $X_j/\sigma\geq  \Phi^{-1}(\delta)$ and $\bar u_{\alpha'}$ given by (\ref{eqhatu1}). Corollary \ref{cor_minimax_onesided2} part (1) implies that $\bar M_{\alpha'}$ satisfies $\bar M_{\alpha'}\in \cM_+$ and the inequality (\ref{cor_minimax_onesided2_eq2}), 
when the SNR satisfies the condition in scenario (A). Since the construction of $\bar M_{\alpha'}$ is free of any unknown quantities, the confidence set $\bar M_{\alpha'}$ is automatically adaptive in scenario (A). 

Now, we focus on the scenario (B). While Corollary \ref{cor_minimax_onesided2} part (2) implies that $\bar M_{\alpha'}$ has the desired coverage probability and is asymptotically optimal, the construction of  $\bar M_{\alpha'}$ with $\bar u_{\alpha'}$ given by (\ref{eqhatu2}) requires the knowledge of unknown sparsity $s$ and therefore is not adaptive. In the following, we propose to construct an adaptive sparse confidence set in scenario (B). Define 
$$
\bar S^{ad}_{\alpha'}=\Big\{j\in [d]: X_j/\sigma\geq \sqrt{2\log \left(\frac{2d}{(\alpha-\alpha')C_{d,\alpha-\alpha'}}\right)}\Big\},
$$ 
where $C_{d,\alpha'}=2\sqrt{\pi\log (\frac{d}{\alpha'})}$. Consider a grid of points $\{1, 2, 2^2,...,2^T\}$, where $T$ is the largest integer such that $2^T\leq d$. Define $\hat s=2^{\hat m}$, where $\hat m=\{m\in[T]: 2^{m-1}\leq |\bar S^{ad}_{\alpha'}|<2^m\}$. Finally, define the adaptive sparse confidence set as 
\begin{equation}\label{eqhatmasy2}
\hat M^{ad}_{\alpha'}=M(\bar S^{ad}_{\alpha'},  \hat\bU,  \hat\bL_{\hat s}), ~\textrm{where}~  \hat L_{j,\hat s}=(X_j- u_{\alpha',\hat s}\sigma)_+, ~\hat U_j=+\infty
\end{equation} 
for $j\in \bar S^{ad}_{\alpha'}$ and $\hat L_{j,\hat s}=\hat U_j=0$ otherwise, and 
$$
u_{\alpha',\hat s}=\sqrt{2\log \Big(\frac{4\hat s}{(\alpha-\alpha')C_{2\hat s,\alpha-\alpha'}}\Big)}.
$$

It is seen that the construction of the adaptive interval $\hat M^{ad}_{\alpha'}$ is similar to $\bar M_{\alpha'}$, but there are several key differences. First, we use a slightly different cutoff for $X_j/\sigma$ in $\bar S^{ad}_{\alpha'}$. When $2s\leq d$ and $s,d\rightarrow\infty$, both the cutoffs in $\bar S^{ad}_{\alpha'}$ and $\bar S_{\alpha'}$ 
are asymptotically equivalent to $\sqrt{2\log d}$. Second, we replace the unknown sparsity $s$ in $\bar u_{\alpha'}$ in (\ref{eqhatu2}) with $2\hat s$, where $\hat s$ can be viewed as the rounding of the cardinality of the set  $\bar S^{ad}_{\alpha'}$ to the grid $\{1, 2, 2^2,...,2^T\}$. The intuition is as follows. While the asymptotic exact recovery of the support set of $\btheta$ is infeasible under Scenario (B) (see Remark \ref{remcomparison}), $\bar S^{ad}_{\alpha'}$ is still a reasonable approximation of the unknown support set. In particular, we prove that the cardinality of $\bar S^{ad}_{\alpha'}$ is of an order $s$ with high probability. We further round $|\bar S^{ad}_{\alpha'}|$ to the grid in order to rigorously control $\EE_{\btheta}(\theta_j-\hat L_{j,\hat s})$ when $|\bar S^{ad}_{\alpha'}|$ is too large. The rounding step is similar to the peeling method in the empirical process \citep{vaart1996weak,kosorok2007introduction} and has been used in the Lepski's method for adaptive estimation \citep{lepskii1991problem,lepskii1992asymptotically,birge2001alternative}. 

The following theorem presents the main result in this section. 
\begin{theorem}\label{themadap}
Assume that $2s\leq d$, $s,d\rightarrow\infty$ and $\delta$ and $\alpha$ are fixed.  Let $\alpha'=\gamma\alpha$ for any constant $0<\gamma<1$. The adaptive sparse confidence set $\hat M^{ad}_{\alpha'}$ belongs to $\cM_+$ and 
$$
\limsup_{d,s\rightarrow\infty}\frac{R(\hat M^{ad}_{\alpha'}, \Theta^+(s,a))}{\sigma\sqrt{2\log s}}\leq 1,
$$
where $s,a$ satisfy the condition in scenario (B). 
\end{theorem}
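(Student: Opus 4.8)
The plan is to establish the two assertions of the theorem separately: that $\hat M^{ad}_{\alpha'}\in\cM_+$ (that is, $\bar S^{ad}_{\alpha'}\in\cF(\delta)$ together with the asymptotic coverage bound), and that $R(\hat M^{ad}_{\alpha'},\Theta^+(s,a))\le(1+o(1))\sigma\sqrt{2\log s}$. Throughout write $t_d:=\sqrt{2\log(2d/((\alpha-\alpha')C_{d,\alpha-\alpha'}))}$ for the cutoff defining $\bar S^{ad}_{\alpha'}$, $\bar\Phi:=1-\Phi$, $Z_j:=(X_j-\theta_j)/\sigma$, $k:=\|\btheta\|_0\le s$, and $N:=\sum_{j:\theta_j=0}\mathbf{1}(X_j/\sigma\ge t_d)$ for the number of false positives; by Lemma~\ref{lemtail}, $t_d=\sqrt{2\log d-\log\log d+O(1)}$ and $\EE_{\btheta}N\le d\,\bar\Phi(t_d)=\tfrac{\alpha-\alpha'}{2}(1+o(1))=O(1)$. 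Two structural facts will be used repeatedly: $\{j\in\bar S^{ad}_{\alpha'}\}$ depends only on $X_j$; and $\hat s\le 2|\bar S^{ad}_{\alpha'}|\le 2(k+N)$, while on $\{j\in\bar S^{ad}_{\alpha'}\}$ the statistic $\hat s$ is a deterministic function of $\{X_i\}_{i\ne j}$. Membership $\bar S^{ad}_{\alpha'}\in\cF(\delta)$ is then immediate since $\PP_0(j\in\bar S^{ad}_{\alpha'})=\bar\Phi(t_d)\to 0\le 1-\delta$ for $d$ large.

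For coverage I would decompose $\{\btheta\notin\hat M^{ad}_{\alpha'}\}$ into three events: (a) $\supp(\btheta)\not\subseteq\bar S^{ad}_{\alpha'}$; (b) some $j$ with $\theta_j=0$ has $j\in\bar S^{ad}_{\alpha'}$ and $X_j>u_{\alpha',\hat s}\sigma$; (c) some $j\in\supp(\btheta)\cap\bar S^{ad}_{\alpha'}$ has $Z_j>u_{\alpha',\hat s}$. For (a), a union bound over the $\le s$ signals gives probability at most $s\,\bar\Phi(a/\sigma-t_d)$; using $2s\le d$ and taking $C'$ large enough that $\sqrt{2\log(d-s)-\log\log(d-s)+C'}\ge t_d$, scenario (B) forces $a/\sigma-t_d\ge\sqrt{2\log s-\log\log s+C'}\vee\bar\xi_d$, and then Lemma~\ref{lemtail} gives $s\,\bar\Phi(a/\sigma-t_d)\le s\,\bar\Phi(\sqrt{2\log s-\log\log s+C'})\le e^{-C'/2}/\sqrt{2\pi}\le\alpha'$. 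For (b), conditioning on $\{X_i\}_{i\ne j}$ (so that $\hat s$ is deterministic on $\{j\in\bar S^{ad}_{\alpha'}\}$) bounds each summand by $\bar\Phi(t_d)$, so event (b) has probability at most $d\,\bar\Phi(t_d)=\tfrac{\alpha-\alpha'}{2}(1+o(1))$.

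Event (c) is the crux and is handled by peeling over the number $M$ of missed signals. On $\{j\in\bar S^{ad}_{\alpha'},\,M=m\}$ one has $|\bar S^{ad}_{\alpha'}|=k-m+N\ge k-m$, hence $\hat s\ge k-m$ and $u_{\alpha',\hat s}\ge u_{\alpha',k-m}$; moreover $\{M=m\}$ is measurable with respect to $\{X_i\}_{i\ne j}$. Conditioning on those variables, the contribution of a fixed signal $j$ on $\{M=m\}$ is at most $\bar\Phi(u_{\alpha',k-m})\binom{k-1}{m}p^m$ where $p:=\bar\Phi(a/\sigma-t_d)$. Summing over the $\le k$ signals and over $m\le k-1$, invoking the elementary identity $\tfrac{k}{k-m}\binom{k-1}{m}=\binom{k}{m}$ and the calibration $\ell\,\bar\Phi(u_{\alpha',\ell})\le\tfrac{\alpha-\alpha'}{4}(1+o(1))$ (immediate from the definition of $u_{\alpha',\ell}$ and Lemma~\ref{lemtail}), event (c) has probability at most $\tfrac{\alpha-\alpha'}{4}(1+o(1))\sum_{m=0}^{k}\binom{k}{m}p^m=\tfrac{\alpha-\alpha'}{4}(1+o(1))(1+p)^k=\tfrac{\alpha-\alpha'}{4}(1+o(1))$, since $kp\le sp\le e^{-C'/2}/\sqrt{2\pi}$ is small. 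Adding (a)+(b)+(c) gives $\limsup_{d,s\to\infty}\sup_{\btheta}\PP_{\btheta}(\btheta\notin\hat M^{ad}_{\alpha'})\le\alpha'+\tfrac{\alpha-\alpha'}{2}+\tfrac{\alpha-\alpha'}{4}<\alpha$, the slack $\tfrac{\alpha-\alpha'}{4}$ absorbing the finite-$\ell$ and finite-$d$ corrections once $C'$ is large. This step is the main obstacle: since $\hat s$, and hence $u_{\alpha',\hat s}$, depends on the whole data vector including $X_j$, no deterministic cutoff can simply be substituted; a naive union bound — replacing $u_{\alpha',\hat s}$ by $u_{\alpha',1}$, or bounding by $\sum_m\PP(M=m)$ without the $\bar\Phi(u_{\alpha',k-m})$ weight — produces a bound diverging in $s$, and it is the peeling together with the collapsing identity that closes it.

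For the risk bound, coordinates with $\theta_j=0$ contribute $\theta_j-\hat L_{j,\hat s}=-\hat L_{j,\hat s}\le 0$, so it suffices to bound $\EE_{\btheta}(\theta_j-\hat L_{j,\hat s})$ for $\theta_j\ge a$. Using $(x)_+\ge x$ one gets $\theta_j-\hat L_{j,\hat s}\le\theta_j\,\mathbf{1}(j\notin\bar S^{ad}_{\alpha'})+\sigma(u_{\alpha',\hat s}-Z_j)\,\mathbf{1}(j\in\bar S^{ad}_{\alpha'})$. The first term has expectation $\theta_j\,\bar\Phi(\theta_j/\sigma-t_d)$, which (since $x\mapsto x\,\bar\Phi(x/\sigma-t_d)$ is eventually decreasing by Mills' ratio and $a/\sigma>t_d$) is maximized at $\theta_j=a$, giving $a\,\bar\Phi(a/\sigma-t_d)$; since $a/\sigma=O(\sqrt{\log d})$ and scenario (B) forces $a/\sigma-t_d\ge\bar\xi_d$, this is $o(\sigma\sqrt{2\log s})$ — the $\bar\xi_d$ correction is exactly what is needed to handle the extremely sparse sub-case in which $\sqrt{2\log s}$ alone would be too weak. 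For the second term, $\EE[Z_j\,\mathbf{1}(j\in\bar S^{ad}_{\alpha'})]\ge 0$ because $\{j\in\bar S^{ad}_{\alpha'}\}=\{Z_j\ge t_d-\theta_j/\sigma\}$ depends only on $X_j$ and $\EE[Z\,\mathbf{1}(Z\ge c)]\ge0$ for every $c$, so it is at most $\sigma\,\EE[u_{\alpha',\hat s}]$; combining $\hat s\le 2(k+N)$ with $\EE N=O(1)$ and applying Jensen's inequality (concavity of $\sqrt{\cdot}$ and of $\log$) yields $\EE[u_{\alpha',\hat s}]\le\sqrt{2\log s+O(1)}=(1+o(1))\sqrt{2\log s}$. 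Hence $R(\hat M^{ad}_{\alpha'},\Theta^+(s,a))\le a\,\bar\Phi(a/\sigma-t_d)+\sigma(1+o(1))\sqrt{2\log s}=(1+o(1))\sigma\sqrt{2\log s}$, which is the claim.
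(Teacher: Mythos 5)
Your proposal is correct, but it reaches both conclusions by a genuinely different route than the paper. For coverage, the paper first localizes $\hat s$: it proves (Lemma \ref{lemhats}) that $|\bar S^{ad}_{\alpha'}|$ concentrates in $[s/2,2s]$ with exponentially small exception probability, and on $\{s/2\le\hat s\le 4s\}$ it replaces the random threshold $u_{\alpha',\hat s}$ by the deterministic $u_{\alpha',s/2}$ and repeats the argument of Theorem \ref{thm_spraseCI}; you instead peel over the number $m$ of missed signals, use $\hat s\ge k-m$ on the relevant event, condition on $\{X_i\}_{i\neq j}$, and collapse the sum with $\tfrac{k}{k-m}\binom{k-1}{m}=\binom{k}{m}$. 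Your version has a real advantage: it is uniform in $k=\|\btheta\|_0\le s$, whereas the lower-tail half of Lemma \ref{lemhats} ("$|\bar S^{ad}_{\alpha'}|\le s/2$ implies at least $s/2$ signals are missed") tacitly treats $\btheta$ as having $s$ nonzero entries; the price is that your budget is tighter ($\alpha'+\tfrac{\alpha-\alpha'}{2}+\tfrac{\alpha-\alpha'}{4}$ instead of $\alpha'+\tfrac{\alpha-\alpha'}{2}+\tfrac{\alpha-\alpha'}{2}$), so you lean on slack rather than exact calibration. For the risk bound, the paper splits on $\{\hat s\le 4s\}$ versus $\{\hat s>4s\}$ and controls the second piece by Cauchy--Schwarz, a second-moment bound (Lemma \ref{lemlength}) and the exponential tail of $\PP_{\btheta}(\hat s=2^t)$, summing a series over the dyadic grid; you avoid both auxiliary lemmas entirely by noting $\EE[Z_j I(Z_j\ge c)]\ge 0$, reducing the signal term to $\sigma\EE[u_{\alpha',\hat s}]$, and then applying Jensen twice with $\EE\hat s\le 2(s+\EE N)=2s+O(1)$. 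This is more elementary and cleaner, and it delivers the same $(1+o(1))\sigma\sqrt{2\log s}$.

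Two small points to tighten in a final write-up. First, your calibration $\ell\bigl(1-\Phi(u_{\alpha',\ell})\bigr)\le\tfrac{\alpha-\alpha'}{4}(1+o(1))$ is asymptotic in $\ell$, but the peeling sum needs a bound uniform over all $\ell\ge 1$ (including $\ell=k-m$ small); using Lemma \ref{lemtail} one gets $\sup_{\ell\ge1}\ell\bigl(1-\Phi(u_{\alpha',\ell})\bigr)\le c(\alpha-\alpha')$ with $c<1/2$, which keeps the total below $\alpha$, so the gap is closable but should be stated explicitly. Second, the phrase "$a/\sigma=O(\sqrt{2\log d})$" is not literally true, since scenario (B) only lower-bounds $a$; it is harmless because you have already shown $x\mapsto x\bigl(1-\Phi(x/\sigma-t_d)\bigr)$ is decreasing beyond the scenario-(B) threshold, so the worst case sits at that boundary, where the claim does hold. (Also handle the trivial edge case $|\bar S^{ad}_{\alpha'}|=0$, on which $\hat s$ is undefined but irrelevant since the corresponding indicator vanishes.)
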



Thus, the upper bound of $R(\hat M^{ad}_{\alpha'}, \Theta^+(s,a))$ is asymptotically identical to the ``non-adaptive" confidence set $\bar M_{\alpha'}$ as shown in Corollary \ref{cor_minimax_onesided2} part (2) and minimax optimal up to a constant.



\section{Extension to Two-sided Sparse Confidence Sets}\label{sec_twosided}
In this section, we assume that $\btheta\in\Theta(s,a)$, where 
$$
\Theta(s,a)=\{\btheta\in\RR^d: \|\btheta\|_0\leq s, \min_{j: \theta_j\neq 0} |\theta_j|\geq a>0\}.
$$
The goal is to generalize the results in Sections \ref{sec_onesided} and \ref{sec_opt} to two-sided sparse confidence intervals for $\btheta$ in $\Theta(s,a)$. To this end, consider the following estimator of the support set, 
\begin{equation}\label{eq2}
\hat S^{TS}_{\alpha'}=\Big\{j\in[d]:|X_j|/\sigma\geq \left(\Phi^{-1}(\frac{\alpha'}{2s})+a/\sigma\right)_+\vee \Phi^{-1}\left(\frac{1+\delta}{2}\right)\Big\},
\end{equation}
where $\alpha'$ is the tolerance level. Similarly, we require $|X_j|/\sigma\geq \Phi^{-1}((1+\delta)/2)$ to 
guarantee the resulting confidence interval is sparse, i.e., $\hat S^{TS}_{\alpha'}\in\cF(\delta)$, where $\cF(\delta)$ is defined in (\ref{eqF}). 

The following theorem, which is parallel to  Theorems \ref{themlower} and \ref{themupper}, establishes the upper and lower bounds of the non-coverage probability $\PP_{\btheta}(\supp(\btheta)\not\subseteq \hat S )$ under $\Theta(s,a)$. 
\begin{theorem}\label{them2s_uplow}
\begin{itemize}
\item[(1)] For any $s\geq 1$ and $0< \delta<1$,  we have
\begin{equation}
\inf_{\hat S \in\cF(\delta)}\sup_{\btheta\in\Theta(s,a)}\PP_{\btheta}(\supp(\btheta)\not\subseteq \hat S )\geq 1-\frac{1}{(\Delta_{TS}+1)^s},    \label{low_TS}
\end{equation}
where $\Delta_{TS}=\Phi(\Phi^{-1}(\frac{1+\delta}{2})+\frac{a}{\sigma})-\Phi(-\Phi^{-1}(\frac{1+\delta}{2})+\frac{a}{\sigma})$. 
\item[(2)] Assume that $s,d\rightarrow\infty$.  Let $c_s$ be a sequence satisfying $c_s\rightarrow\infty$ and $c_s/s\rightarrow 0$. Assume that $\delta\geq c$ for some constant $c>0$. If
\begin{equation}\label{alower_TS}
a/\sigma\le \phi_*:=\Phi^{-1}(\frac{1+\delta}{2})-\Phi^{-1}(\frac{c_s}{s}),
\end{equation} 
we have
\begin{equation}\label{low2_TS}
\liminf_{d,s\rightarrow\infty}\inf_{\hat S \in\cF(\delta)}\sup_{\btheta\in\Theta(s,a)}\PP_{\btheta}(\supp(\btheta)\not\subseteq \hat S )=1. 
\end{equation} 
\item[(3)] For any $0<\alpha'<1$, it holds that $\hat S^{TS}_{\alpha'} \in\cF(\delta)$. In addition, if 
\begin{equation}\label{astar_TS}
\frac{a}{\sigma}\geq \phi^*:=\Phi^{-1}(\frac{\delta+1}{2})-\Phi^{-1}(\frac{\alpha'}{2s})
\end{equation}
holds, then
\begin{equation}\label{up2_TS}
\sup_{\btheta\in\Theta(s,a)}\PP_{\btheta}( \supp(\btheta) \not\subseteq \hat S^{TS}_{\alpha'} )\leq \alpha'.
\end{equation} 
\end{itemize}
\end{theorem}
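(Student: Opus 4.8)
The plan is to mirror the three parts of this two-sided statement on the corresponding one-sided arguments (Theorems~\ref{themlower} and~\ref{themupper}), replacing the one-sided event $\{X_j/\sigma \ge \Phi^{-1}(\delta)\}$ everywhere with the two-sided event $\{|X_j|/\sigma \ge \Phi^{-1}((1+\delta)/2)\}$, which has probability exactly $1-\delta$ under $\PP_0$; this is what makes the symmetric threshold the correct analogue. The only genuinely new ingredient is that the alternative can now place a nonzero $\theta_j$ at either $+a$ or $-a$ (or more generally any value of magnitude $\ge a$), so wherever the one-sided proof computed a single tail $\Phi(\Phi^{-1}(\delta)-a/\sigma)$ of a shifted Gaussian, the two-sided proof has two tails and must account for both; this is the source of the formula $\Delta_{TS}=\Phi(\Phi^{-1}(\tfrac{1+\delta}{2})+\tfrac a\sigma)-\Phi(-\Phi^{-1}(\tfrac{1+\delta}{2})+\tfrac a\sigma)$.

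For part~(1), I would use the same reduction to a single coordinate as in Theorem~\ref{themlower}: fix a support set of size $s$, place the common nonzero value at $a$ (or $-a$; by symmetry it does not matter), and use the ancillarity/separability condition in the definition of $\cF(\delta)$ to factor $\PP_\btheta(\supp(\btheta)\subseteq \hat S)$ into a product over the $s$ active coordinates. On each active coordinate $j$, the constraint $\PP_0(j\in \hat S(\bX))\le 1-\delta$ together with the fact that $\{j\in \hat S\}$ depends only on $X_j$ forces $\PP_a(j\in \hat S)\le \sup\{\PP_a(X_j\in B): \PP_0(X_j\in B)\le 1-\delta\}$, and the Neyman--Pearson/monotone-likelihood-ratio argument identifies the extremal region $B$ as the symmetric band $\{|x|/\sigma \le \Phi^{-1}((1+\delta)/2)\}$, whose probability under the $N(a,\sigma^2)$ shift is exactly $\Delta_{TS}$. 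Taking the product over $s$ coordinates gives $\PP_\btheta(\supp(\btheta)\subseteq \hat S)\le \Delta_{TS}^s$; but I actually need the bound $1/(\Delta_{TS}+1)^s$, so — exactly as in Theorem~\ref{themlower} — I would instead randomize over the choice of support set (a Bayesian prior spreading mass across the $\binom ds$ possible supports, or more simply across disjoint blocks) so that the per-coordinate "probability of being wrongly excluded" contributes a factor $1/(\Delta_{TS}+1)$ rather than $\Delta_{TS}$; combining the $s$ independent coordinatewise factors yields~(\ref{low_TS}). Part~(2) then follows from~(\ref{low_TS}) by the same asymptotic bookkeeping as in the passage from~(\ref{low}) to~(\ref{low2}): under~(\ref{alower_TS}) one checks $\Delta_{TS}\to 0$ fast enough (using $\Phi^{-1}(c_s/s)\to-\infty$ and a Gaussian tail expansion, plus $\delta\ge c$ to keep $\Phi^{-1}((1+\delta)/2)$ bounded) that $1/(\Delta_{TS}+1)^s\to 0$, i.e.\ the non-coverage probability tends to $1$.

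For part~(3), membership $\hat S^{TS}_{\alpha'}\in\cF(\delta)$ is immediate from the construction: the event $\{j\in \hat S^{TS}_{\alpha'}\}$ depends only on $X_j$, and because the threshold is at least $\Phi^{-1}((1+\delta)/2)$, under $X_j\sim N(0,\sigma^2)$ we get $\PP_0(j\in\hat S^{TS}_{\alpha'})\le \PP_0(|X_j|/\sigma\ge \Phi^{-1}((1+\delta)/2))=1-\delta$. For the coverage bound~(\ref{up2_TS}), I would bound $\PP_\btheta(\supp(\btheta)\not\subseteq \hat S^{TS}_{\alpha'})\le \sum_{j\in\supp(\btheta)}\PP_\btheta(j\notin \hat S^{TS}_{\alpha'})$ by the union bound. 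For $j\in\supp(\btheta)$ we have $|\theta_j|\ge a$, so WLOG $\theta_j\ge a$; then $j\notin\hat S^{TS}_{\alpha'}$ requires $|X_j|/\sigma < (\Phi^{-1}(\alpha'/(2s))+a/\sigma)_+ \vee \Phi^{-1}((1+\delta)/2)$, and under condition~(\ref{astar_TS}) the $\vee$ is attained by the first term, so the event is contained in $\{X_j/\sigma < \Phi^{-1}(\alpha'/(2s))+a/\sigma\}$, which (since $X_j/\sigma - a/\sigma$ is standard normal under the worst case $\theta_j=a$, and the probability only decreases for $\theta_j>a$) has probability at most $\Phi(\Phi^{-1}(\alpha'/(2s)))=\alpha'/(2s)\le \alpha'/s$; actually one should also discard the lower-tail event $\{X_j/\sigma < -\cdots\}$, which contributes negligibly, but to be safe I would keep the factor $2s$ in the threshold precisely so that the two tails together sum to at most $\alpha'/s$ per coordinate. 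Summing over the at most $s$ elements of $\supp(\btheta)$ gives $\le \alpha'$, which is~(\ref{up2_TS}).

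The main obstacle is the per-coordinate extremal argument in part~(1): one must verify that, among all Borel sets $B$ with $\PP_0(X_j\in B)\le 1-\delta$ whose indicator defines $\{j\in\hat S\}$, the one maximizing $\PP_a(X_j\in B)$ is the symmetric band $\{|x|/\sigma\le\Phi^{-1}((1+\delta)/2)\}$ — and then correctly convert the resulting product bound $\Delta_{TS}^s$ into the $(\Delta_{TS}+1)^{-s}$ form via the randomized-support prior, so that the bound holds as a genuine minimax lower bound over all $\hat S\in\cF(\delta)$ rather than merely for a fixed $\btheta$. Everything else is a routine transcription of the one-sided proofs with the symmetric threshold substituted in.
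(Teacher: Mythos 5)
Your part (3) and the general scaffolding (randomized support, factorization via the separability condition in $\cF(\delta)$, Jensen-type bookkeeping to get the $(\Delta_{TS}+1)^{-s}$ form) are fine, but part (1) has a genuine gap exactly at the step you flagged as the main obstacle, and the verification you hoped for fails. If you fix all nonzero coordinates at $+a$ (or all at $-a$), the per-coordinate extremal problem is simple-versus-simple: maximize $\PP_a(X_j\in B)$ subject to $\PP_0(X_j\in B)\leq 1-\delta$. By Neyman--Pearson, since the likelihood ratio of $N(a,\sigma^2)$ against $N(0,\sigma^2)$ is monotone increasing in $x$, the maximizer is the one-sided region $\{x\geq \sigma\Phi^{-1}(\delta)\}$, \emph{not} the complement of the symmetric band; the corresponding minimal miss probability is the one-sided quantity $\Delta=\Phi(\Phi^{-1}(\delta)-a/\sigma)$, which is strictly smaller than $\Delta_{TS}$. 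So a fixed-sign least-favorable configuration only yields the weaker bound $1-1/(\Delta+1)^s$, and this is not merely cosmetic: part (2) genuinely needs $\Delta_{TS}$, because the two-sided SNR threshold $\phi_*$ exceeds the one-sided $\kappa_*$, and for $\kappa_*<a/\sigma\leq\phi_*$ one can have $s\Delta\rightarrow 0$ while $s\Delta_{TS}\gtrsim c_s\rightarrow\infty$. The paper's remedy is to randomize over the sign as well as the support: it averages the risk over the two classes $A^+$ (all signals $+a$) and $A^-$ (all signals $-a$), uses Jensen's inequality to replace $\frac{u_+^k+u_-^k}{2}$ by $\bigl(\frac{u_++u_-}{2}\bigr)^k$, and then applies Neyman--Pearson to the simple null against the two-point mixture $\frac{1}{2}N(a,\sigma^2)+\frac{1}{2}N(-a,\sigma^2)$; the likelihood ratio is $\cosh(aX/\sigma^2)$, monotone in $|x|$, so the least-favorable acceptance region is indeed the symmetric band and the mixture miss probability is exactly $\Delta_{TS}$. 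Your proposal, which dismisses the sign choice with ``by symmetry it does not matter,'' misses this: symmetry makes the two single-sign problems equally easy for the selector, but only the mixture forces the selector to pay for both tails.

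Two smaller points. In part (2) your stated logic is inverted: you need $\Delta_{TS}$ to decay \emph{slowly} enough that $s\log(1+\Delta_{TS})\rightarrow\infty$, and the SNR upper bound (\ref{alower_TS}) is what supplies a \emph{lower} bound on $\Delta_{TS}$; the paper handles this by splitting into the case $a/\sigma\leq \Phi^{-1}(\frac{1+\delta}{2})$, where $\Delta_{TS}\geq c/2$ (this is where $\delta\geq c$ enters), and the case $\Phi^{-1}(\frac{1+\delta}{2})<a/\sigma\leq\phi_*$, where monotonicity of $x\mapsto\Phi(\Phi^{-1}(\frac{1+\delta}{2})+x)-\Phi(-\Phi^{-1}(\frac{1+\delta}{2})+x)$ together with the Gaussian tail bound of Lemma \ref{lemtail} gives $\Delta_{TS}\geq c_s/(2s)$, hence $(1+\Delta_{TS})^s\rightarrow\infty$. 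Part (3) of your proposal is essentially the paper's argument (union bound over the support, reduction to $\PP(|Z_j|\geq -\Phi^{-1}(\alpha'/(2s)))$ per active coordinate) and is correct.
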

Note that in part (2), we require $\delta$ to be bounded away from $0$ by a constant. To see the reason, consider the extreme case $\delta=0$, which further implies $\Delta_{TS}=0$. In this case, the lower bound in (\ref{low_TS}) becomes $0$, which is no longer informative. 

In view of (\ref{alower_TS}) and (\ref{astar_TS}), we observe a similar phase transition phenomenon under the parameter space $\Theta(s,a)$; see Remark \ref{rem1} for details. 

Given the index set $\hat S^{TS}_{\alpha'}$, we define the two-sided sparse confidence set for $\btheta\in\Theta(s,a)$ as
$$
\hat M^{TS}_{\alpha'}=M(\hat S^{TS}_{\alpha'}, \hat\bU^{TS}, \hat\bL^{TS}), ~~~\textrm{where}~~\hat L_j^{TS}=X_j-\hat u^{TS}_{\alpha'}\sigma, ~~\hat U_j^{TS}=X_j+\hat u^{TS}_{\alpha'}\sigma
$$
for any $j\in \hat S^{TS}_{\alpha'} $ and
$$
\hat u^{TS}_{\alpha'}=\left\{
\begin{array}{ll}
\Phi^{-1}\Big(1-\frac{\alpha-\alpha'}{2d}\Big) &\textrm{if}~ \phi^*\leq \frac{a}{\sigma}< \phi^*\vee \Big[-\Phi^{-1}(\frac{\alpha-\alpha'}{2d})-\Phi^{-1}(\frac{\alpha'}{2s})\Big],\\
\Phi^{-1}\Big(1-\frac{\alpha-\alpha'-2(d-s)(1-\eta)}{2s}\Big)&\textrm{if}~\frac{a}{\sigma}\geq \phi^*\vee \Big[-\Phi^{-1}(\frac{\alpha-\alpha'}{2d})-\Phi^{-1}(\frac{\alpha'}{2s})\Big],
\end{array}
\right.
$$
where $\eta=\Phi(a/\sigma+\Phi^{-1}(\alpha'/(2s)))$. 

The following theorem shows that $\hat M^{TS}_{\alpha'}$ satisfies the conditions (\ref{eqcoverage}) and (\ref{eqsparsity}). 
\begin{theorem}\label{thm_spraseCI2}
For any given level $0< \alpha'<\alpha$, provided (\ref{astar_TS}) holds, we have 
$$
\sup_{\btheta\in\Theta(s,a), \theta_j=0}\PP_{\btheta}(j\in \hat S^{TS}_{\alpha'})\leq 1-\delta,~~~~\sup_{\btheta\in\Theta(s,a)}\PP_{\btheta}(\btheta\notin\hat M^{TS}_{\alpha'})\leq \alpha.
$$ 
\end{theorem}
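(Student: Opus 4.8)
The plan is to treat the two assertions of Theorem~\ref{thm_spraseCI2} separately, reducing the coverage bound to Theorem~\ref{them2s_uplow}(3) plus a coordinatewise union bound. The sparsity bound $\sup_{\btheta\in\Theta(s,a),\,\theta_j=0}\PP_{\btheta}(j\in\hat S^{TS}_{\alpha'})\le 1-\delta$ is immediate from Theorem~\ref{them2s_uplow}(3): that theorem asserts $\hat S^{TS}_{\alpha'}\in\cF(\delta)$, and by the definition~(\ref{eqF}) of $\cF(\delta)$ this means both that $\{j\in\hat S^{TS}_{\alpha'}\}$ depends on $X_j$ alone and that $\PP_0(j\in\hat S^{TS}_{\alpha'})\le 1-\delta$; when $\theta_j=0$ the marginal law of $X_j$ is $N(0,\sigma^2)$, so the latter probability equals $\PP_{\btheta}(j\in\hat S^{TS}_{\alpha'})$. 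The measurability property will be reused below.

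For the coverage bound, I would first decompose the non-coverage event. By the definition~(\ref{eqci}) of $M(\hat S^{TS}_{\alpha'},\hat\bU^{TS},\hat\bL^{TS})$, if $\btheta\notin\hat M^{TS}_{\alpha'}$ then either there is some $j$ with $\theta_j\neq0$ and $j\notin\hat S^{TS}_{\alpha'}$ (i.e.\ $\supp(\btheta)\not\subseteq\hat S^{TS}_{\alpha'}$), or there is some $j\in\hat S^{TS}_{\alpha'}$ with $\theta_j\notin[\hat L_j^{TS},\hat U_j^{TS}]=[X_j-\hat u^{TS}_{\alpha'}\sigma,\,X_j+\hat u^{TS}_{\alpha'}\sigma]$, i.e.\ $|X_j-\theta_j|>\hat u^{TS}_{\alpha'}\sigma$. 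A union bound therefore gives
$$
\PP_{\btheta}(\btheta\notin\hat M^{TS}_{\alpha'})\;\le\;\PP_{\btheta}\big(\supp(\btheta)\not\subseteq\hat S^{TS}_{\alpha'}\big)\;+\;\sum_{j=1}^{d}\PP_{\btheta}\big(j\in\hat S^{TS}_{\alpha'},\;|X_j-\theta_j|>\hat u^{TS}_{\alpha'}\sigma\big).
$$
Under~(\ref{astar_TS}), Theorem~\ref{them2s_uplow}(3) bounds the first term by $\alpha'$, so it remains to show the sum is at most $\alpha-\alpha'$. Since $\hat u^{TS}_{\alpha'}$ is a deterministic cutoff and $\{j\in\hat S^{TS}_{\alpha'}\}$ depends only on $X_j$, each summand is a one-dimensional Gaussian probability; for the $s_0:=\|\btheta\|_0\le s$ indices with $\theta_j\neq0$ I drop the event $\{j\in\hat S^{TS}_{\alpha'}\}$ and use $X_j-\theta_j\sim N(0,\sigma^2)$ to bound it by $2\big(1-\Phi(\hat u^{TS}_{\alpha'})\big)$, while for the $d-s_0$ null indices I drop the tail event and bound it by $\PP_0(j\in\hat S^{TS}_{\alpha'})$.

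Next I would evaluate these bounds in the two SNR regions; both lie in the range $a/\sigma\ge\phi^*$ guaranteed by~(\ref{astar_TS}), so the threshold defining $\hat S^{TS}_{\alpha'}$ simplifies to $\Phi^{-1}(\alpha'/(2s))+a/\sigma$ and hence $\PP_0(j\in\hat S^{TS}_{\alpha'})=2(1-\eta)$ with $\eta=\Phi(a/\sigma+\Phi^{-1}(\alpha'/(2s)))$. In the low-SNR region $\hat u^{TS}_{\alpha'}=\Phi^{-1}(1-\frac{\alpha-\alpha'}{2d})$, so $2\big(1-\Phi(\hat u^{TS}_{\alpha'})\big)=\frac{\alpha-\alpha'}{d}$ and, bounding each of the $d$ coordinates by this quantity, the sum is at most $\alpha-\alpha'$. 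In the high-SNR region $2\big(1-\Phi(\hat u^{TS}_{\alpha'})\big)=\frac{1}{s}\big(\alpha-\alpha'-2(d-s)(1-\eta)\big)$, which is positive because the defining inequality $a/\sigma\ge-\Phi^{-1}(\frac{\alpha-\alpha'}{2d})-\Phi^{-1}(\frac{\alpha'}{2s})$ forces $1-\eta\le\frac{\alpha-\alpha'}{2d}$; hence the sum is at most $\frac{s_0}{s}\big(\alpha-\alpha'-2(d-s)(1-\eta)\big)+(d-s_0)\cdot2(1-\eta)$, and a short algebraic manipulation using $s_0\le s$ together with $1-\eta\le\frac{\alpha-\alpha'}{2d}$ shows this is again at most $\alpha-\alpha'$. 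Adding the $\alpha'$ from the first term yields $\PP_{\btheta}(\btheta\notin\hat M^{TS}_{\alpha'})\le\alpha$.

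The main obstacle is the bookkeeping in the high-SNR region: a naive Bonferroni correction over all $d$ coordinates would cost a factor $d/s$ and break the bound, so it is essential to exploit that a null coordinate enters $\hat S^{TS}_{\alpha'}$ only with the small probability $2(1-\eta)$, and to verify that the probability budget thereby saved is precisely what the definition of $\hat u^{TS}_{\alpha'}$ spends on the at most $s$ true signals. This is the two-sided counterpart of the argument behind Theorem~\ref{thm_spraseCI}; the only substantive changes are the two-sided tail factors of $2$ and the symmetric selection threshold $\Phi^{-1}(\frac{1+\delta}{2})$, and the remaining manipulations are routine.
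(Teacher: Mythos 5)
Your proof is correct and takes essentially the same route as the paper's: the same decomposition of $\{\btheta\notin\hat M^{TS}_{\alpha'}\}$ into $\{\supp(\btheta)\not\subseteq\hat S^{TS}_{\alpha'}\}$ (bounded by $\alpha'$ via Theorem \ref{them2s_uplow}(3)) plus a coordinatewise union bound over selected coordinates, with the same region-wise budget accounting built into the definition of $\hat u^{TS}_{\alpha'}$ (your explicit tracking of $s_0=\|\btheta\|_0\le s$ even makes the high-SNR bookkeeping slightly more careful than the paper's). One small remark: in the low-SNR region the null-coordinate summands must be bounded by the tail probability $2\big(1-\Phi(\hat u^{TS}_{\alpha'})\big)=\frac{\alpha-\alpha'}{d}$ rather than by $\PP_0(j\in\hat S^{TS}_{\alpha'})=2(1-\eta)$ as announced in your general setup (the latter can exceed $\frac{\alpha-\alpha'}{d}$ there), which is what you implicitly do when you bound all $d$ coordinates uniformly by $\frac{\alpha-\alpha'}{d}$.
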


We can develop a similar framework as in Section \ref{sec_opt} to study the optimality of the two-sided sparse confidence intervals. To this end, define the class of two-sided confidence sets as
\begin{align*}
CI=\{M(S, \bU, \bL):  ~&\textrm{$L_j, U_j$ only depend on $X_j$, $L_j\leq U_j$, and for $j\notin S$,  $L_j=U_j=0$}\}.
\end{align*}
To evaluate the optimality, it boils down to investigate the trade-off between the length of the interval $M(S, \bU, \bL)\in CI$, i.e., $\sup_{1\leq j\leq d}\EE_{\btheta}(U_j-L_j)$, and its coverage probability. Define
\begin{align*}
\cM(m, \delta)=\Big\{M(S, \bU, \bL)\in CI:  \sup_{1\leq j\leq d}\sup_{\btheta\in \Theta^+(s,a)}\EE_{\btheta}(U_j-L_j)\leq m,~\textrm{and}~S \in\cF(\delta)\Big\},
\end{align*}
to be the class of confidence sets such that the length is no greater than $m$ uniformly over $1\leq j\leq d$ and $\btheta\in \Theta(s,a)$ and $S \in\cF(\delta)$ holds as defined in (\ref{eqF}). 

The following theorem, parallel to Theorem \ref{thm_minimax_onesided}, provides the lower bound for the non-coverage probability of $M \in\cM(m,\delta)$. 
\begin{theorem}[Minimax lower bound]\label{thm_minimax_twosided}
For any $s\geq 1$ and $M \in\cM(m,\delta)$, it holds that
\begin{equation}\label{eqthm_minimax_twosided}
\sup_{\btheta\in \Theta(s,a)} \PP_{\btheta}(\btheta\notin M)\geq \max\Big(\sup_{\rho\geq a,A\leq s} G_{TS}(d,A,\rho,m), \sup_{\rho\geq 0, B\leq s} G_{TS}(s,B,\rho,m), 1-\frac{1}{(\Delta_{TS}+1)^s}\Big),
\end{equation}
where $\Delta_{TS}$ is defined in Theorem \ref{them2s_uplow}, 
$$
G_{TS}(d,A,\rho,m)=\frac{A[g_{TS}(d,A,\rho)-m/\rho]_+}{1+A[g_{TS}(d,A,\rho)-m/\rho]_+},
$$
with 
$$
g_{TS}(d,A,\rho)=\frac{2(d-A)}{A}\Phi(-D)+\Phi\Big(\frac{\rho}{\sigma}+D\Big)-\Phi\Big(\frac{\rho}{\sigma}-D\Big),
$$
and 
$$
D=\frac{\sigma}{\rho}\cosh^{-1}\left(\frac{d-A}{A}\exp(\frac{\rho^2}{2\sigma^2})\right),
$$
and $G_{TS}(s,B,\rho,m)$ is defined similarly. Note that $\cosh(x)=\exp(x)/2+\exp(-x)/2$ and $\cosh^{-1}$ is the inverse function of $\cosh(x)$ on $\RR^+$.
\end{theorem}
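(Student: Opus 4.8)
\noindent\textit{Proof strategy.}\ The plan is to obtain each of the three terms on the right-hand side of (\ref{eqthm_minimax_twosided}) as a separate lower bound for $\sup_{\btheta\in\Theta(s,a)}\PP_\btheta(\btheta\notin M)$, following the same line as the proof of Theorem~\ref{thm_minimax_onesided}; the only new ingredients are that the one-sided Gaussian likelihood ratio is replaced by the symmetric two-sided one (which is why $\cosh^{-1}$ enters the cutoff $D$) and that the one-sided length functional $\EE_\btheta(\theta_j-L_j)$ is replaced by the interval length $\EE_\btheta(U_j-L_j)$. The third term is immediate: every $M=M(S,\bU,\bL)\in\cM(m,\delta)$ has $S\in\cF(\delta)$, and $\btheta\in M$ forces $\supp(\btheta)\subseteq S$, so $\PP_\btheta(\btheta\notin M)\ge\PP_\btheta(\supp(\btheta)\not\subseteq S)$; taking $\sup_\btheta$ and then $\inf_{S\in\cF(\delta)}$ and invoking Theorem~\ref{them2s_uplow}(1) yields $1-1/(\Delta_{TS}+1)^s$.

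For the first term, fix $A\le s$ and $\rho\ge a$, and put on $\Theta(s,a)$ the prior $\pi$ under which a support $T\subseteq[d]$ of size $A$ is chosen uniformly, $\theta_j=\epsilon_j\rho$ on $T$ for i.i.d.\ Rademacher signs $\epsilon_j$, and $\theta_j=0$ off $T$; then $\sup_{\btheta\in\Theta(s,a)}\PP_\btheta(\btheta\notin M)\ge\PP_\pi(\btheta\notin M)$. Under $\pi$ the marginal law of $X_j$ is the three-point mixture $\frac{d-A}{d}N(0,\sigma^2)+\frac{A}{2d}N(\rho,\sigma^2)+\frac{A}{2d}N(-\rho,\sigma^2)$, and since $L_j$, $U_j$ and the event $\{j\in S\}$ depend on $X_j$ only, the coverage event factorizes across coordinates once we condition on $T$ and the signs. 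The crux is a per-coordinate bound of the form
$$\PP_\pi\big(\theta_j\notin[L_j,U_j]\big)\ \ge\ \frac Ad\,g_{TS}(d,A,\rho)-\frac1\rho\,\EE_\pi(U_j-L_j),$$
with the convention $[L_j,U_j]=\{0\}$ when $j\notin S$. The first term is a Neyman--Pearson computation: against the symmetric mixture, the Bayes rule for declaring $\theta_j\neq0$ is $\{|X_j|>\sigma D\}$ with $\cosh(D\rho/\sigma)=\frac{d-A}{A}e^{\rho^2/2\sigma^2}$, whose minimal weighted (false positive plus false negative) error equals exactly $\frac Ad g_{TS}(d,A,\rho)$; a procedure whose interval degenerates to a point can do no better than pin down $\theta_j$, hence cannot beat this. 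The subtracted term records that a non-degenerate interval can hedge between two of the three candidate values of $\theta_j$ only by spanning a gap of length $\ge\rho$ (and all three only by length $\ge2\rho$), so each unit of expected interval length removes at most $1/\rho$ of non-coverage probability; since the zero vector and the vector with $j$th coordinate $\rho$ and all others $0$ both lie in $\Theta(s,a)$ when $\rho\ge a$, the constraint $M\in\cM(m,\delta)$ gives $\EE_\pi(U_j-L_j)\le m$ for every $j$.

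Granting the per-coordinate bound, one aggregates as follows. Conditioning on $T$ and the signs, coordinatewise independence makes the conditional probability that $M$ covers $\btheta$ a product over $j$, which by $1-x\le e^{-x}$ is at most $\exp\{-\sum_j q_j\}$ with $q_j$ the per-coordinate failure probability; using the per-coordinate bound together with $\sum_j\EE_\pi(U_j-L_j)\le dm$ and the false-positive/false-negative accounting of the one-sided proof, the exponent is at least $A[g_{TS}(d,A,\rho)-m/\rho]_+$. The inequality $1-e^{-x}\ge x/(1+x)$ for $x\ge0$ then gives $\PP_\pi(\btheta\notin M)\ge G_{TS}(d,A,\rho,m)$, and optimizing over $A\le s$ and $\rho\ge a$ produces the first term; the mild dependence created by fixing $|T|=A$ is removed exactly as in Theorem~\ref{thm_minimax_onesided}, via Maclaurin's inequality for elementary symmetric polynomials or by Poissonizing the support with a negligible correction since $A\le s$. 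The second term $\sup_{\rho\ge0,\,B\le s}G_{TS}(s,B,\rho,m)$ is produced by the same argument applied to the restricted prior supported in the first $s$ coordinates (equivalently, revealing $\supp(\btheta)\subseteq[s]$ to the statistician), which merely replaces $d$ by $s$; it remains a valid lower bound and becomes dominant in the high-SNR regime where the $\sqrt{2\log d}$ scale of the first term is no longer attainable.

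I expect the main obstacle to be the per-coordinate inequality, specifically the term $-\frac1\rho\EE_\pi(U_j-L_j)$: one must show rigorously that replacing a general interval $[L_j(X_j),U_j(X_j)]$ by a suitable degenerate one raises the posterior non-coverage probability at each value of $X_j$ by at most $(U_j(X_j)-L_j(X_j))/\rho$, which needs a careful case analysis of how an interval of given length can simultaneously (partly) cover $0$, $\rho$ and $-\rho$, followed by the correct bookkeeping of the false positives contributed by the $d-A$ null coordinates so that the aggregate exponent collapses to exactly $A[g_{TS}(d,A,\rho)-m/\rho]_+$ and not a weaker quantity. The remaining steps are routine once the one-sided argument underlying Theorem~\ref{thm_minimax_onesided} is available.
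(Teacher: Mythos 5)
Your core ingredients do match the paper's: the symmetric three-point mixture $\frac{d-A}{d}N(0,\sigma^2)+\frac{A}{2d}N(\rho,\sigma^2)+\frac{A}{2d}N(-\rho,\sigma^2)$, the Neyman--Pearson/Bayes test whose acceptance region $\{|x|\le\sigma D\}$ produces the $\cosh^{-1}$ cutoff and the value $\frac{A}{d}g_{TS}(d,A,\rho)$, the Markov-type observation that covering both $0$ and $\rho$ forces $U_j-L_j\ge\rho$ (so $\PP_{\rho}(0,\rho\in CI_j)\le \EE_\rho(U_j-L_j)/\rho\le m/\rho$), and the third term via Theorem \ref{them2s_uplow}(1). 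But your aggregation step has a genuine gap. You bound the conditional coverage, given $\btheta$, by $\exp\{-\sum_j \PP_{\theta_j}(\theta_j\notin CI_j)\}$ and then substitute the prior-averaged per-coordinate bound into the exponent; the exponent is a random function of $\btheta$, and Jensen runs the wrong way ($\EE_\pi e^{-Z}\ge e^{-\EE_\pi Z}$), so $\PP_\pi(\btheta\notin M)\ge 1-e^{-\EE_\pi Z}$ does not follow. Your bookkeeping is also too lossy: subtracting $\frac{1}{\rho}\EE_\pi(U_j-L_j)$ per coordinate and using $\sum_j\EE_\pi(U_j-L_j)\le dm$ yields an exponent $Ag_{TS}-dm/\rho$, not the claimed $A[g_{TS}-m/\rho]_+$; the $-m/\rho$ correction must be charged only to the $A$ signal coordinates (only $\PP_{\pm\rho}(\pm\rho\notin CI_j)$ gets replaced by $\PP_{\pm\rho}(0\in CI_j)-m/\rho$, the null term needs no correction). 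The paper avoids both problems by never exponentiating: it keeps the bound linear, $\PP_\btheta(\btheta\notin M)\ge t\sum_j\PP_{\theta_j}(\theta_j\notin CI_j)$ with $t=\inf_{\btheta}\prod_j\PP_{\theta_j}(\theta_j\in CI_j)$, averages over the two sign classes $A^+\cup A^-$ (linearity makes this harmless), and combines with $\sup_\btheta\PP_\btheta(\btheta\notin M)\ge 1-t$, optimizing over $t$ to obtain $x/(1+x)=G_{TS}$. Your appeal to Maclaurin's inequality or Poissonization misidentifies the difficulty: conditional on $\btheta$ the coordinates are independent, so there is no dependence to remove; the issue is the order of expectation and exponentiation (your route could be salvaged by taking the supremum over the prior's support before exponentiating and by the correct $Am/\rho$ accounting, but as written it does not go through).

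The second term is also not obtained the way you describe. A prior "supported in the first $s$ coordinates" with entries $\pm\rho$ and zeros elsewhere lies in $\Theta(s,a)$ only when $\rho\ge a$, so it cannot deliver $\sup_{\rho\ge0,\,B\le s}G_{TS}(s,B,\rho,m)$; the range $0\le\rho<a$ is exactly the one that matters in the high-SNR regime (there $\rho\asymp\sigma\sqrt{2\log s}$ while $a$ is much larger). The paper's construction fixes all of the first $s$ coordinates at the nonzero base value $a$ and perturbs $B$ of them by $\rho$, so every vector in the perturbation class satisfies the minimum-magnitude constraint for any $\rho\ge0$ and the per-coordinate testing problem is between the base value and the perturbed value; "merely replacing $d$ by $s$" is not sufficient. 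Finally, the case analysis you flag as the main obstacle is simpler than you fear: one never needs to analyze how an interval straddles all three of $0,\rho,-\rho$; it suffices to compare coverage of $\pm\rho$ with coverage of $0$ one sign at a time, which is exactly the one-line Markov bound above.
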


In practice, we usually pre-specify the coverage probability of the confidence set. Define 
\begin{align*}
\cM=\{M(S, \bU, \bL)\in CI:   \liminf_{d,s\rightarrow\infty}\inf_{\btheta\in \Theta(s,a)} \PP_{\btheta}(\btheta\in M(S, \bU, \bL))\geq 1-\alpha,~\textrm{and}~S \in\cF(\delta)\}.
\end{align*}
to be the two-sided sparse confidence sets with coverage probability no smaller than $1-\alpha$. We can similarly invert Theorem \ref{thm_minimax_twosided} to derive the lower bound for the length of confidence intervals $\sup_{1\leq j\leq d}\EE_{\btheta}(U_j-L_j)$ of $M\in \cM$. To match the lower bound, we consider the asymptotic version of $\hat M^{TS}_{\alpha'}$. Define
$$
\phi^{**}=\Phi^{-1}(\frac{1+\delta}{2})+\sqrt {2\log (\frac{2s}{C_{2s,\alpha'}\alpha'})},
$$
and 
$$
\bar \phi=\sqrt{2\log (\frac{4(d-s)}{(\alpha-\alpha')C_{2(d-s),\alpha-\alpha'}})}+\sqrt {2\log (\frac{2s}{C_{2s,\alpha'}\alpha'})},
$$
where $C_{s,\alpha'}=2(\pi\log(s/\alpha'))^{1/2}$. Define
\begin{equation}\label{eqhatmasy_TS}
\bar M^{TS}_{\alpha'}=M(\bar S^{TS}_{\alpha'},  \bar\bU^{TS},  \bar\bL^{TS}), ~\textrm{where}~  \bar L_j^{TS}=X_j- \bar u^{TS}_{\alpha'}\sigma, ~ \bar L_j^{TS}=X_j+ \bar u^{TS}_{\alpha'}\sigma
\end{equation} 
for $j\in \bar S_{\alpha'}$, where $\bar S^{TS}_{\alpha'}$ and $\bar u^{TS}_{\alpha'}$ are given as follows: 
\begin{itemize}
\item When $\phi^{**}\leq a/\sigma<\bar \phi$, define $j\in \bar S^{TS}_{\alpha'}$ if and only if $|X_j/\sigma|\geq  \Phi^{-1}((\delta+1)/2)$, and 
\begin{equation}\label{eqhatu1_TS}
\bar u^{TS}_{\alpha'}=\sqrt{2\log \Big(\frac{2d}{(\alpha-\alpha')C_{2d,\alpha-\alpha'}}\Big)}.
\end{equation} 
\item When $a/\sigma\geq \bar \phi$, define $j\in \bar S^{TS}_{\alpha'}$ if and only if $|X_j/\sigma|\geq \sqrt{2\log (\frac{4(d-s)}{(\alpha-\alpha')C_{2(d-s),\alpha-\alpha'}})}$, and 
\begin{equation}\label{eqhatu2_TS}
\bar u^{TS}_{\alpha'}=\sqrt{2\log \Big(\frac{4s}{(\alpha-\alpha')C_{2s,\alpha-\alpha'}}\Big)}.
\end{equation} 
\end{itemize}
Similar to Corollary \ref{cor_asym}, we can show that 
$$
\limsup_{d,s\rightarrow\infty}\sup_{\btheta\in\Theta(s,a), \theta_j=0}\PP_{\btheta}(j\in \bar S^{TS}_{\alpha'})\leq 1-\delta, ~~\limsup_{d,s\rightarrow\infty}\sup_{\btheta\in\Theta(s,a)}\PP_{\btheta}(\btheta\notin \bar M^{TS}_{\alpha'})\leq \alpha.
$$

Finally, in the  following corollary, we establish the optimality of $\bar M^{TS}_{\alpha'}$ within the class $\cM$. 

\begin{corollary}\label{cor_minimax_twosided2}
Assume that $d,s\rightarrow\infty$ and $0<\delta, \alpha<1$ are fixed. 
\begin{itemize}
\item[(1).] If $\phi^{**}\leq a/\sigma\leq \sqrt{2\log (d/A_d-1)}$ for some sequence $A_d\leq s$ satisfying $A_d\rightarrow\infty$ and $d/A_d \rightarrow\infty$, then
\begin{equation}\label{cor_minimax_twosided2_eq1}
\liminf_{d,s\rightarrow\infty}\inf_{M\in\cM} \frac{\sup_{1\leq j\leq d}\sup_{\btheta\in \Theta(s,a)} \EE_{\btheta}(U_j-L_j)}{\sigma\sqrt{2\log d}/2}\geq 1.
\end{equation}
For $\bar M^{TS}_{\alpha'}$ with $\alpha'=\gamma\alpha$ for any constant $0<\gamma<1$, we have $\bar M^{TS}_{\alpha'}\in \cM$ and 
\begin{equation}\label{cor_minimax_twosided2_eq2}
\limsup_{d,s\rightarrow\infty}\frac{\sup_{1\leq j\leq d}\sup_{\btheta\in \Theta(s,a)} \EE_{\btheta}(\bar U_j^{TS}-\bar L_j^{TS})}{2\sigma\sqrt{2\log d}}\leq 1.
\end{equation} 
\item[(2).] If $a/\sigma\geq \sqrt{2\log (d-s)-\log\log (d-s)+C'}+\sqrt{2\log s-\log\log s+C'}$ for some sufficiently large positive constant $C'$, then
\begin{equation}\label{cor_minimax_twosided2_eq3}
\liminf_{d,s\rightarrow\infty}\inf_{M\in\cM} \frac{\sup_{1\leq j\leq d}\sup_{\btheta\in \Theta(s,a)} \EE_{\btheta}(U_j-L_j)}{\sigma\sqrt{2\log s}/2}\geq 1.
\end{equation}
The sparse confidence set $\bar M_{\alpha'}$ satisfies $\bar M_{\alpha'}\in \cM$ and 
\begin{equation}\label{cor_minimax_twosided2_eq4}
\limsup_{d,s\rightarrow\infty}\frac{\sup_{1\leq j\leq d}\sup_{\btheta\in \Theta(s,a)} \EE_{\btheta}(\bar U^{TS}_j-\bar L_j^{TS})}{2\sigma\sqrt{2\log s}}\leq 1.
\end{equation} 
\end{itemize}
\end{corollary}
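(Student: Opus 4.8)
The plan is to split each of the two parts into a lower bound on the interval length (obtained by inverting Theorem~\ref{thm_minimax_twosided}) and a matching upper bound for $\bar M^{TS}_{\alpha'}$, exactly paralleling the route from Theorem~\ref{thm_minimax_onesided} to Corollary~\ref{cor_minimax_onesided2}. For the lower bound \eqref{cor_minimax_twosided2_eq1}, fix any $M=M(S,\bU,\bL)\in\cM$ and set $m_{d,s}=\sup_{1\le j\le d}\sup_{\btheta\in\Theta(s,a)}\EE_\btheta(U_j-L_j)$, so $M\in\cM(m_{d,s},\delta)$ and Theorem~\ref{thm_minimax_twosided} applies with $m=m_{d,s}$. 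Suppose, toward a contradiction, that $\liminf_{d,s\to\infty}m_{d,s}/(\sigma\sqrt{2\log d}/2)<1$; then along a subsequence $m_{d,s}\le(1-2\epsilon)\sigma\sqrt{2\log d}/2$ for some fixed $\epsilon>0$. I would invoke the first term in the maximum of \eqref{eqthm_minimax_twosided} at $A^\star=\min(A_d,s,\lceil\log d\rceil)$ and $\rho^\star=\sigma\sqrt{2\log(d/A^\star-1)}$: here $A^\star\to\infty$, $d/A^\star\to\infty$, and since $A^\star\le A_d$ the hypothesis gives $a/\sigma\le\sqrt{2\log(d/A_d-1)}\le\rho^\star/\sigma$, so the pair $(A^\star,\rho^\star)$ is admissible. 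The identity $\tfrac{d-A^\star}{A^\star}e^{(\rho^\star/\sigma)^2/2}=\big(\tfrac{d-A^\star}{A^\star}\big)^2$ together with $\cosh^{-1}(x)=\log(2x)+O(x^{-2})$ yields $D=\rho^\star/\sigma+\tfrac{\log 2}{\sqrt{2L}}+o(L^{-1/2})$ with $L=\log\tfrac{d-A^\star}{A^\star}$, hence $D^2/2=L+\log 2+o(1)$; feeding this into Lemma~\ref{lemtail} gives $\tfrac{2(d-A^\star)}{A^\star}\Phi(-D)=\tfrac{1}{2\sqrt{\pi L}}(1+o(1))\to0$ and, since $\rho^\star/\sigma-D=-o(1)$ while $\rho^\star/\sigma+D\to\infty$, $\Phi(\rho^\star/\sigma+D)-\Phi(\rho^\star/\sigma-D)\to\tfrac12$. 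Thus $g_{TS}(d,A^\star,\rho^\star)=\tfrac12+o(1)$, and because $\rho^\star/\sigma=\sqrt{2\log(d/A^\star-1)}=(1+o(1))\sqrt{2\log d}$ (using $A^\star\le\lceil\log d\rceil$), $m_{d,s}/\rho^\star\le\tfrac12-\tfrac{\epsilon}{2}$ eventually. Consequently $A^\star[g_{TS}(d,A^\star,\rho^\star)-m_{d,s}/\rho^\star]_+\ge A^\star\cdot\tfrac{\epsilon}{4}\to\infty$, so $G_{TS}(d,A^\star,\rho^\star,m_{d,s})\to1$, forcing $\sup_{\btheta}\PP_\btheta(\btheta\notin M)\to1$ and contradicting $M\in\cM$. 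This proves \eqref{cor_minimax_twosided2_eq1}.

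The lower bound \eqref{cor_minimax_twosided2_eq3} is obtained the same way, now using the second term $G_{TS}(s,B,\rho,m)$ of \eqref{eqthm_minimax_twosided} with $B^\star=\lceil\log s\rceil$ and $\rho^\star=\sigma\sqrt{2\log(s/B^\star-1)}=(1+o(1))\sigma\sqrt{2\log s}$; the only constraint on that term is $\rho\ge0$, so no SNR check is required, and the identical expansion gives $g_{TS}(s,B^\star,\rho^\star)=\tfrac12+o(1)$ and the threshold $\sigma\sqrt{2\log s}/2$. For the membership $\bar M^{TS}_{\alpha'}\in\cM$, I would first locate which branch of \eqref{eqhatmasy_TS} is active. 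Under the SNR hypothesis of part~(1), $C_{2(d-s),\alpha-\alpha'}=2\sqrt{\pi\log(2(d-s)/(\alpha-\alpha'))}$ gives $2\log\tfrac{4(d-s)}{(\alpha-\alpha')C_{2(d-s),\alpha-\alpha'}}=2\log(d-s)-\log\log(d-s)+O(1)$, so the first summand of $\bar\phi$ equals $\sqrt{2\log d}-o(1)$ while its second summand diverges; hence $a/\sigma\le\sqrt{2\log(d/A_d-1)}\le\sqrt{2\log d}<\bar\phi$ eventually, placing $\bar M^{TS}_{\alpha'}$ in its first branch. Under the SNR hypothesis of part~(2), the same substitution shows $\sqrt{2\log(d-s)-\log\log(d-s)+C'}\ge\sqrt{2\log\tfrac{4(d-s)}{(\alpha-\alpha')C_{2(d-s),\alpha-\alpha'}}}$ and likewise for the $s$-terms once $C'$ is large enough, so $a/\sigma\ge\bar\phi$ and the second branch is active. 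In both cases $a/\sigma\ge\phi^{**}$, so the display preceding the corollary gives $\limsup\sup_\btheta\PP_\btheta(\btheta\notin\bar M^{TS}_{\alpha'})\le\alpha$ and $\bar S^{TS}_{\alpha'}\in\cF(\delta)$, i.e.\ $\bar M^{TS}_{\alpha'}\in\cM$.

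For the matching upper bounds, note $\bar U^{TS}_j-\bar L^{TS}_j=2\bar u^{TS}_{\alpha'}\sigma\,\mathbf 1(j\in\bar S^{TS}_{\alpha'})$, so $\sup_j\sup_\btheta\EE_\btheta(\bar U^{TS}_j-\bar L^{TS}_j)\le 2\bar u^{TS}_{\alpha'}\sigma$. In part~(1) (first branch), substituting $C_{2d,\alpha-\alpha'}=2\sqrt{\pi\log(2d/(\alpha-\alpha'))}$ into \eqref{eqhatu1_TS} gives $(\bar u^{TS}_{\alpha'})^2=2\log d-\log\log d+O(1)$, hence $\bar u^{TS}_{\alpha'}=(1+o(1))\sqrt{2\log d}$ and the ratio in \eqref{cor_minimax_twosided2_eq2} is at most $\bar u^{TS}_{\alpha'}/\sqrt{2\log d}\to1$; in part~(2) (second branch), the analogous substitution into \eqref{eqhatu2_TS} gives $\bar u^{TS}_{\alpha'}=(1+o(1))\sqrt{2\log s}$, yielding \eqref{cor_minimax_twosided2_eq4}.

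The main obstacle is the first paragraph's asymptotic analysis of $g_{TS}(d,A,\rho)$ at $\rho$ equal to the two-sided detection threshold: one has to expand $D=\tfrac{\sigma}{\rho}\cosh^{-1}\big(\tfrac{d-A}{A}e^{\rho^2/(2\sigma^2)}\big)$ with enough precision --- two correction terms, not merely the leading logarithm --- to show simultaneously that $\tfrac{2(d-A)}{A}\Phi(-D)$ vanishes and that $\Phi(\rho/\sigma+D)-\Phi(\rho/\sigma-D)$ converges to exactly $\tfrac12$; this constant $\tfrac12$ is precisely what pins down the $\sigma\sqrt{2\log d}/2$ (respectively $\sigma\sqrt{2\log s}/2$) appearing in the lower bounds. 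Everything else --- identifying the active branch, invoking the already-established coverage statement, and computing the expected length --- reduces to substituting the definitions of $C_{\cdot,\cdot}$ and taking logarithms, exactly as in the one-sided case.
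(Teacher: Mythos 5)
Your proposal is correct and follows essentially the same route as the paper: invert Theorem \ref{thm_minimax_twosided} at $\rho=\sigma\sqrt{2\log(d/A-1)}$ (resp.\ $\sigma\sqrt{2\log(s/B-1)}$) with a slowly growing $A$ (resp.\ $B$), show $g_{TS}\to 1/2$ from the $\cosh^{-1}$ expansion, and bound the length by $2\sigma\bar u^{TS}_{\alpha'}$, identifying the second branch under the SNR condition of part (2); the differences (subsequence/contradiction framing, the exact expansion $\cosh^{-1}(y)=\log(2y)+O(y^{-2})$ instead of the paper's bounds $\log y<\cosh^{-1}(y)<\log(2y)$, the explicit replacement $A^\star=\min(A_d,s,\lceil\log d\rceil)$) are cosmetic. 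Your branch check for part (1) is stated a bit loosely when $s$ is comparable to $d$ (the first summand of $\bar\phi$ need not be $\sqrt{2\log d}-o(1)$), but this step is not needed there anyway, since $\bar u^{TS}_{\alpha'}\leq(1+o(1))\sqrt{2\log d}$ holds in either branch, which is exactly how the paper argues.
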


\begin{remark}[Comparison with Selective Confidence Intervals]\label{sec_compare}

Recently, there is a growing interest in developing confidence sets for the selected parameters $\btheta_S$. To be specific, let $S:=S(\bX)\subseteq [d]$ denote the set of indices of the selected parameters. For example, $S(\bX)$ can be $\{(1)\}$ where $(1)$ denotes the index of the larger of $X_1$ and $X_2$ or $S$ may contain the indices of  significant variables via some model selection procedures. Their goal is to construct two-sided confidence intervals for the randomly selected parameters  $\{\theta_i\}_{i\in S}$. Within the framework of selective confidence intervals, there are different types of error rates one may want to control, such as simultaneous over all possible selection (SoP) error rate \citep{berk2013valid}, conditional over selected error rate \citep{lee2016exact} and simultaneous over selected (SoS) error rate \citep{fuentes2018confidence,benjamini2019confidence}. Refer to \cite{benjamini2019confidence} for the detailed literature review. Note that one requirement of our sparse confidence set is (\ref{eqcoverage}) which also holds for the Bonferroni confidence intervals. As a result, the sparse confidence set controls the SoP and SoS errors at level $\alpha$; see Section 5 in \cite{benjamini2019confidence}. 

Indeed, our two-stage procedure is in a similar spirit to  selective confidence intervals. However, we have a different goal from the selective confidence intervals. In their framework, only the selected parameters $\{\theta_i\}_{i\in S}$ are of interest, without any confidence statement about the parameters not selected in $S$ (or equivalently their confidence interval for $\theta_i$ is $(-\infty,+\infty)$ for $i\notin S$). In contrast, the sparse confidence set is constructed  to cover the entire vector of $\btheta$ with any desired coverage probability. If $i\notin \hat S^{TS}_{\alpha'}$ in (\ref{eq2}), our confidence interval for $\theta_i$ is $0$. The uncertainty of assigning $0$ confidence intervals to $\theta_i$ is taken into account in the construction. 

\end{remark}


\section{Numerical Results}\label{sec_numerical}

In this section, we conduct simulation studies to evaluate the performance of the proposed sparse confidence sets and compare with several existing methods in terms of coverage probability, interval length, and support recovery (sparsity). The sensitivity to the choice of $\alpha'$ is also examined empirically. 

We generate $X$ from the normal mean model with $d=1000$, $\sigma=1$ and $\btheta=(a,...,a,0,...,0)$ where the first $s=100$ entries equal $a$, which is also the SNR, and the rest are $0$. We set $\alpha=0.05$, $\delta=0.7$ and vary the value of SNR in the simulations. Recall that the proposed one-sided sparse confidence set $\hat M_{\alpha'}$ in (\ref{eqhatm}) and its asymptotic version $\bar M_{\alpha'}$ in (\ref{eqhatmasy}) depend on the choice of $\alpha'$. For simplicity, we set $\alpha'=\alpha/2$ in view of Remark \ref{rem_alpha}. The sensitivity analysis of $\alpha'$ and further discussions will be shown subsequently.  

We compare the proposed sparse confidence set $\hat M_{\alpha'}$ in (\ref{eqhatm}), its asymptotic version $\bar M_{\alpha'}$ in (\ref{eqhatmasy}), and the adaptive version $\hat M^{ad}_{\alpha'}$ in (\ref{eqhatmasy2}) with the following three methods: Bonferroni confidence interval (\ref{eqBon}),  oracle interval (\ref{eqoracle}) assuming the support of $\btheta$ is known and the plug-in oracle interval (\ref{eqoracle_int}) where $j\in\hat S$ if and only if $X_j/\sigma>(2\log d)^{1/2}$. Provided the SNR is sufficiently large, the threshold $(2\log d)^{1/2}$ guarantees the exact support recovery as shown by \cite{butucea2018variable}. The simulation was repeated 500 times. We report the empirical coverage probability of the above confidence sets for $\btheta$ and the average distance $(\theta_j-L_j)$ over $j\in \supp(\btheta)$ (which can be viewed as a version of interval length for one-sided intervals). For $j\notin \supp(\btheta)$, we often observe that the lower confidence bound is $0$ and $\theta_j-L_j=\theta_j$. Hence, it is not very informative to look at the average distance $(\theta_j-L_j)$ over $j\notin \supp(\btheta)$, and thus we do not report these results. 

Figure \ref{fig1} shows the coverage probability and the average distance $(\theta_j-L_j)$ of the proposed sparse confidence set $\hat M_{\alpha'}$ (hat M), $\bar M_{\alpha'}$ (bar M), oracle interval (oracle), plug-in oracle interval (plug-in), Bonferroni confidence interval (Bonferroni) and our adaptive interval (adaptive) over 500 simulations. It is seen from the left panel that when SNR is small the sparse confidence sets ($\hat M_{\alpha'}$, $\bar M_{\alpha'}$ and $\hat M^{ad}_{\alpha'}$) all have considerably low coverage probability. This agrees with the minimax lower bound in Theorem \ref{themlower}, i.e., construction of sparse confidence sets is impossible if the SNR is too small. Provided the SNR exceeds $4$, all three versions of sparse confidence sets have very similar performance and their coverage probability becomes very close to the desired level.  It is of interest to mention that the coverage probability of the plug-in oracle intervals is only around $0.9$ even if the SNR is sufficiently large. This is because in finite sample the set $\hat S$ may still miss one or two nonzero signals so that the resulting confidence intervals  fail to cover the target parameter $\btheta$. 

From the right panel, we can see that when the SNR is moderate (say between $4$ and $7$) the average distance of our sparse confidence sets is comparable to Bonferroni confidence interval, which is consistent with part (1) of Corollary \ref{cor_minimax_onesided2}. Once SNR exceeds $7$, our sparse confidence sets have a smaller distance and outperform Bonferroni confidence interval; see part (2) of Corollary \ref{cor_minimax_onesided2}. Among these three versions of sparse confidence sets, $\hat M^{ad}_{\alpha'}$ is the most conservative one (with largest average distance $(\theta_j-L_j)$). This can be viewed as the price to pay for not knowing the sparsity $s$ when constructing the sparse confidence sets.

\begin{figure}
\centering
\subfigure{\includegraphics[scale=0.45]{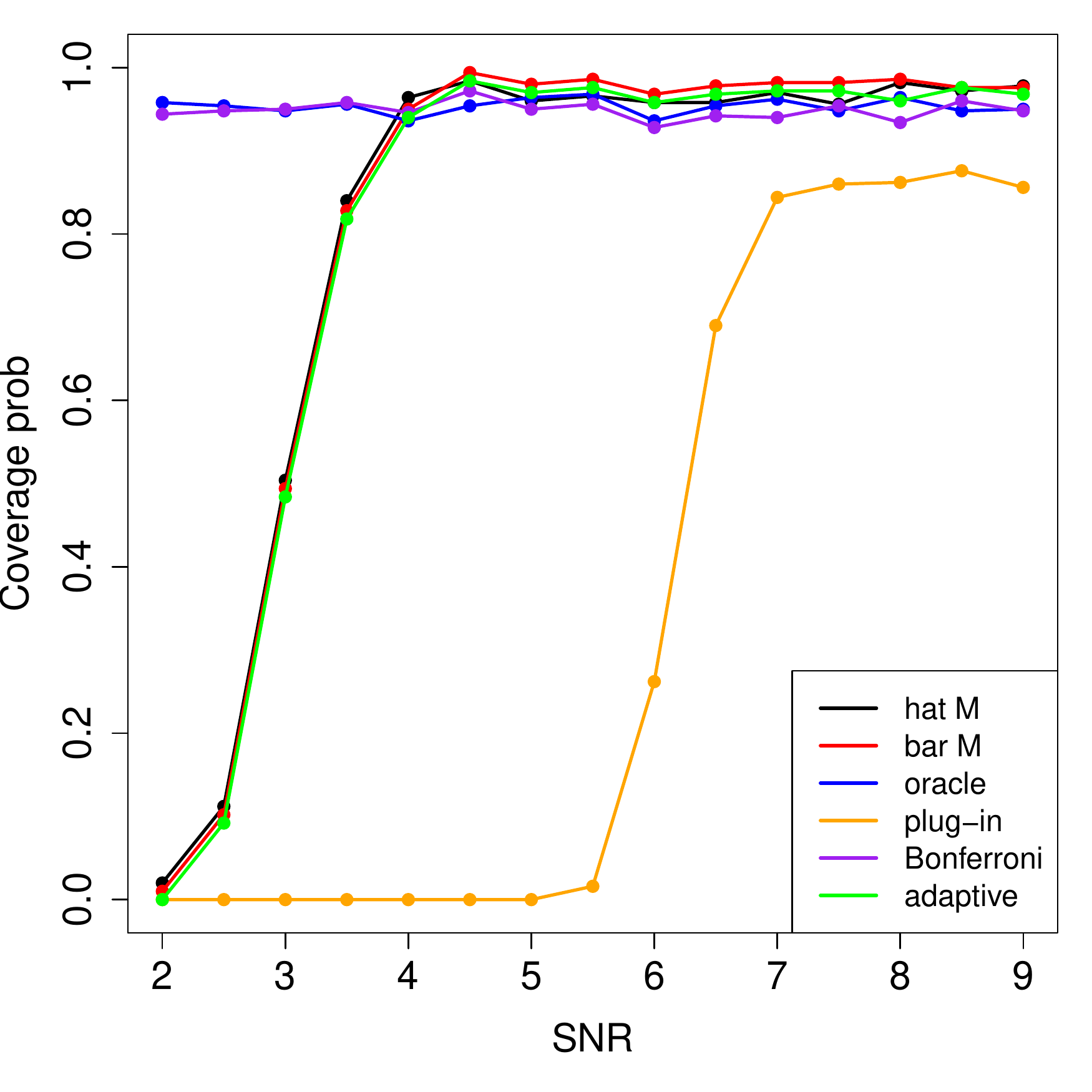}}
   \subfigure{\includegraphics[scale=0.45]{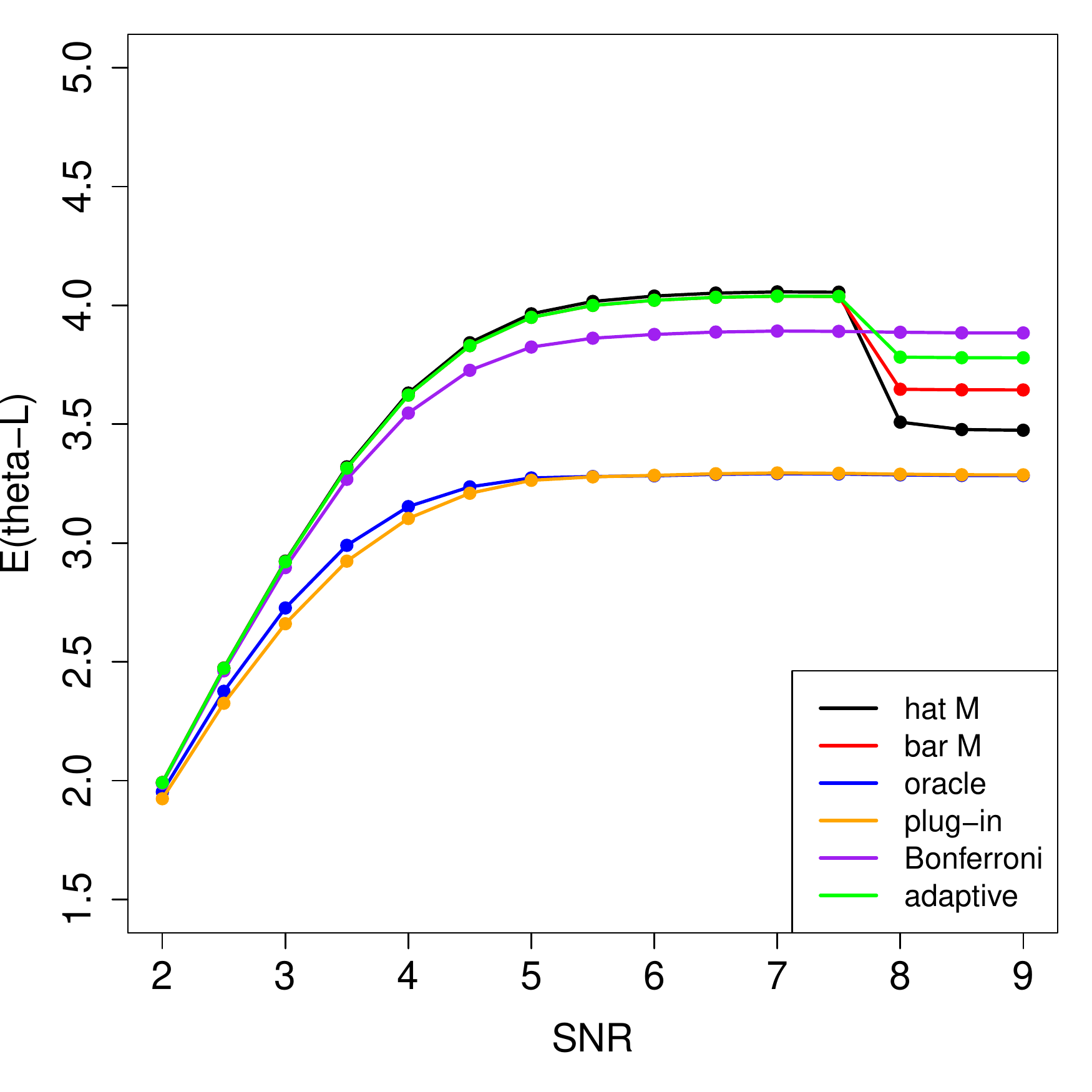}}
   \caption{\label{fig1} Coverage probability and the average distance $(\theta_j-L_j)$ over 500 simulations. }\vspace{-3mm}
   \end{figure}

To better understand the sparsity of the proposed sparse confidence set, we can take a closer look at the estimators of the support set, that is $\hat S_{\alpha'}$ in (\ref{eq1}), $\bar S_{\alpha'}$ in (\ref{eqhatmasy}) and $\hat S$ for the plug-in oracle interval. In particular, we plot $|\hat S_{\alpha'}|$, $|\bar S_{\alpha'}|$ and $|\hat S|$ in log scale in Figure \ref{fig2}. When the SNR is relatively small, $\hat S_{\alpha'}$ reduces to $\{j\in [d]: X_j/\sigma\geq \Phi^{-1}(\delta)\}$. This explains why the curve  for $\hat S_{\alpha'}$ (and similarly  $\bar S_{\alpha'}$) is horizontal for small SNR. As SNR further grows, it becomes easier to separate the nonzero signals from the rest, and therefore, the size of $\hat S_{\alpha'}$ and $\bar S_{\alpha'}$ decreases and eventually reduces to the true sparsity level. In contrast, the set $\hat S$ for support recovery has completely different behaviors. When the SNR is small, very few nonzero $\theta_j$ can be identified via $\hat S$ as $X_j\sim N(\theta_j,1)$ tends to be below the threshold $(2\log d)^{1/2}$. This explains why the coverage probability of the plug-in oracle interval is much lower than the desired level as seen in the left panel of Figure \ref{fig1}.



\begin{figure}
\centering
\includegraphics[scale=0.45]{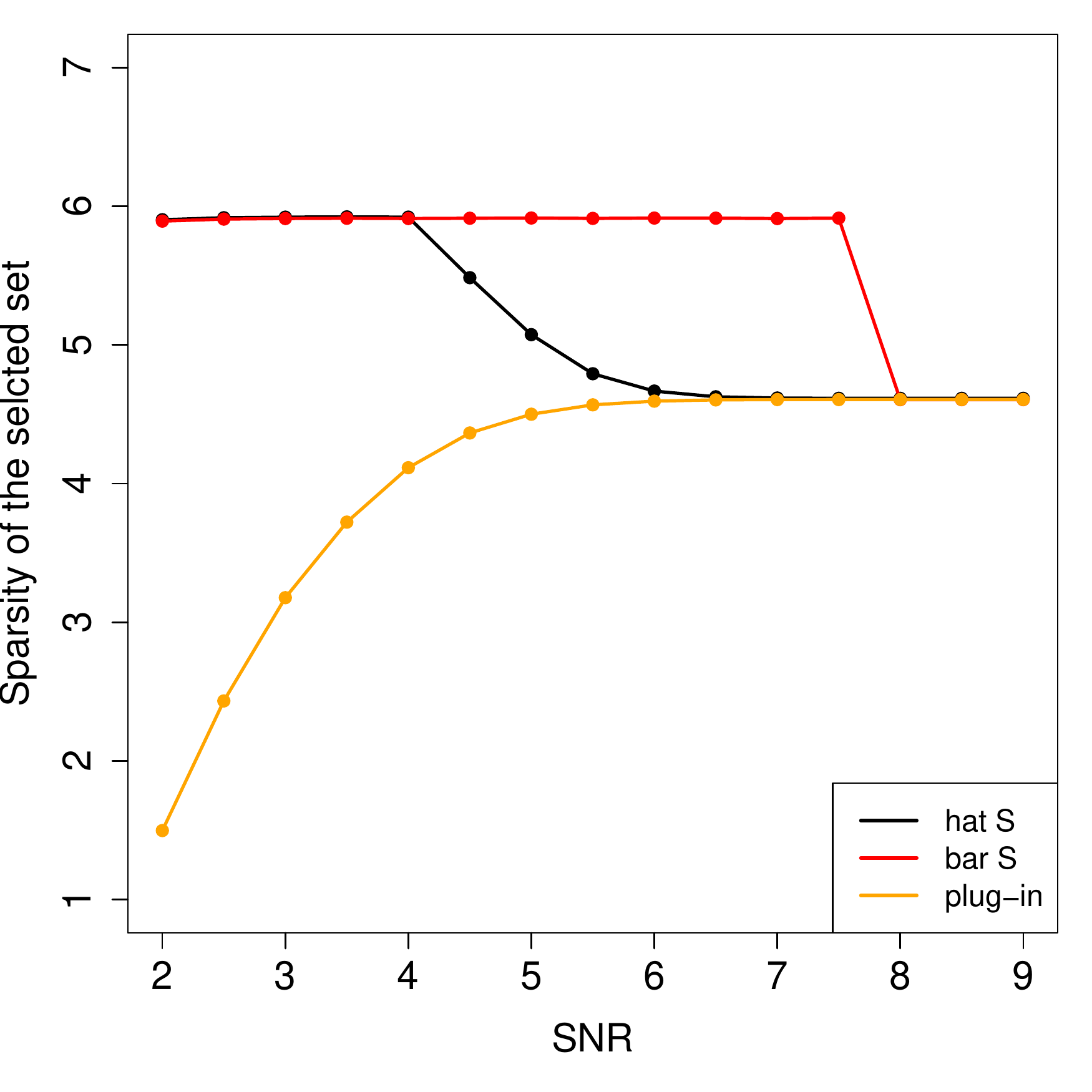}
   \caption{\label{fig2} Cardinality of $\hat S_{\alpha'}$ in (\ref{eq1}), $\bar S_{\alpha'}$ in (\ref{eqhatmasy}) and $\hat S=\{j: X_j/\sigma>(2\log d)^{1/2}\}$ for the plug-in oracle interval over 500 simulations. }\vspace{-3mm}
   \end{figure}

Finally, we analyze how sensitive the coverage probability and the average distance $(\theta_j-L_j)$ of proposed sparse confidence set $\hat M_{\alpha'}$ (hat M), $\bar M_{\alpha'}$ (bar M) is to the choice of $\alpha'$. Figure \ref{fig3} illustrates the results in two cases $SNR=3.8$ (moderate SNR) and $SNR=9$ (high SNR) respectively. In panel (a) and (b), when we increase $\alpha'$, the coverage probability becomes closer to the desired level, with the price that the average distance $(\theta_j-L_j)$ is slightly inflated. For the case where the SNR is sufficiently large (panel (c) and (d)), the coverage probability is less dependent on $\alpha'$, whereas the average distance tends to be much larger when $\alpha'$ is close to $\alpha=0.05$. While the effect of $\alpha'$ is asymptotically ignorable as seen in Remark \ref{rem_alpha}, in finite sample $\alpha'$ influences both the coverage probability and the distance $(\theta_j-L_j)$ of the proposed sparse confidence sets. As seen in Figure \ref{fig3}, it seems they are not very sensitive to the choice of $\alpha'$. For this reason, we simply take $\alpha'=\alpha/2$ in the previous simulations, leading to satisfactory numerical results.

\begin{figure}
\centering
\subfigure[$SNR=3.8$]{\includegraphics[scale=0.45]{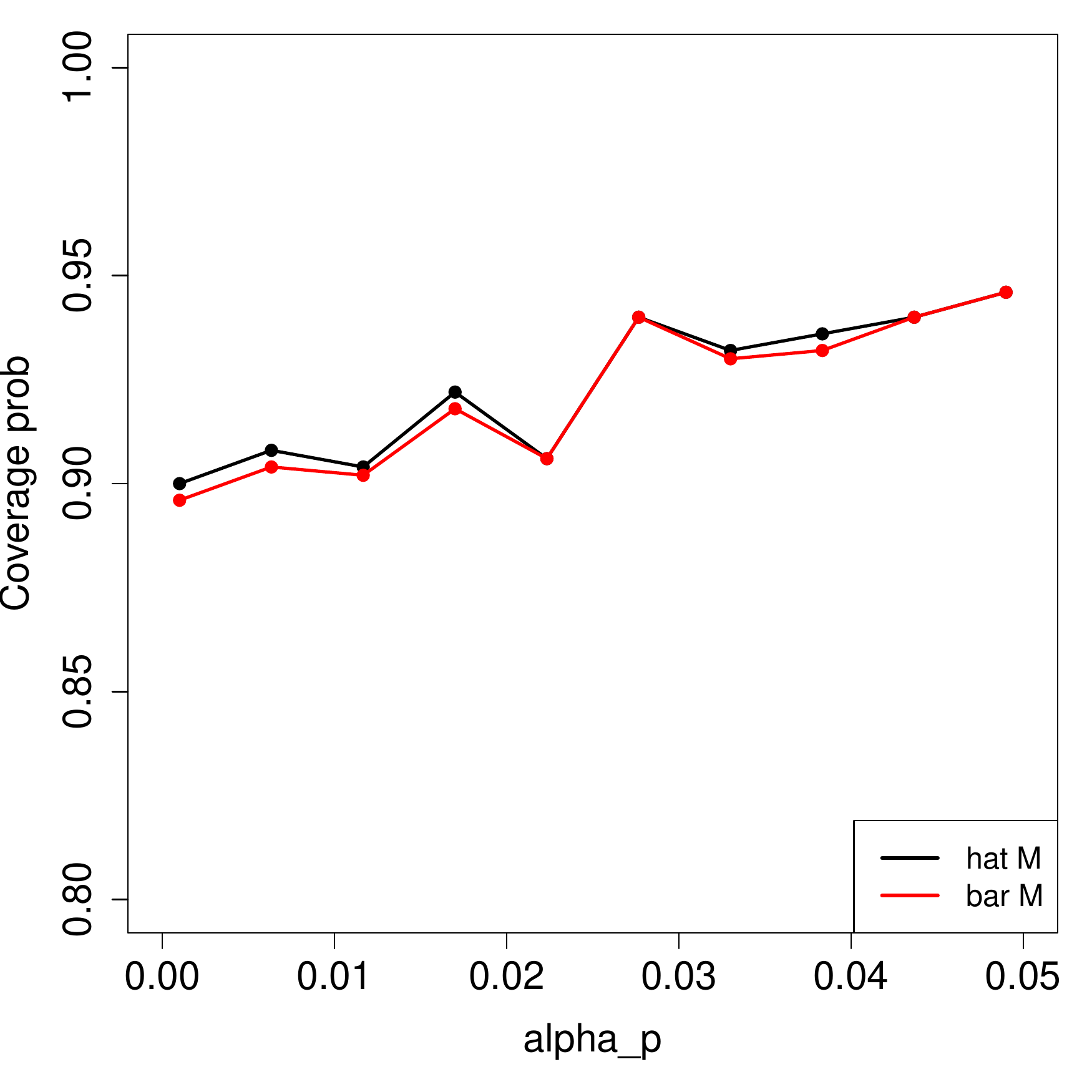}}
   \subfigure[$SNR=3.8$]{\includegraphics[scale=0.45]{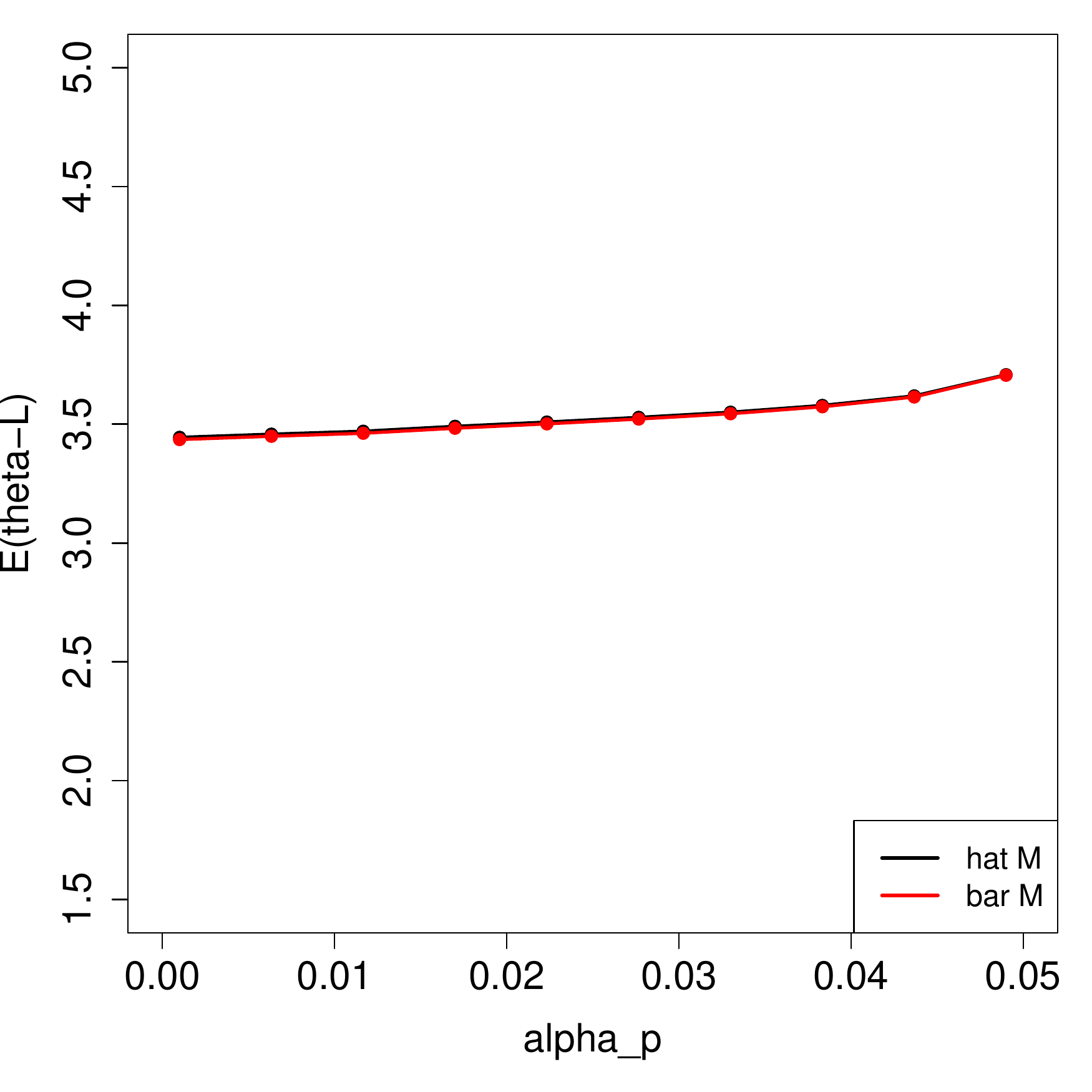}}\\
 \subfigure[$SNR=9$]{\includegraphics[scale=0.45]{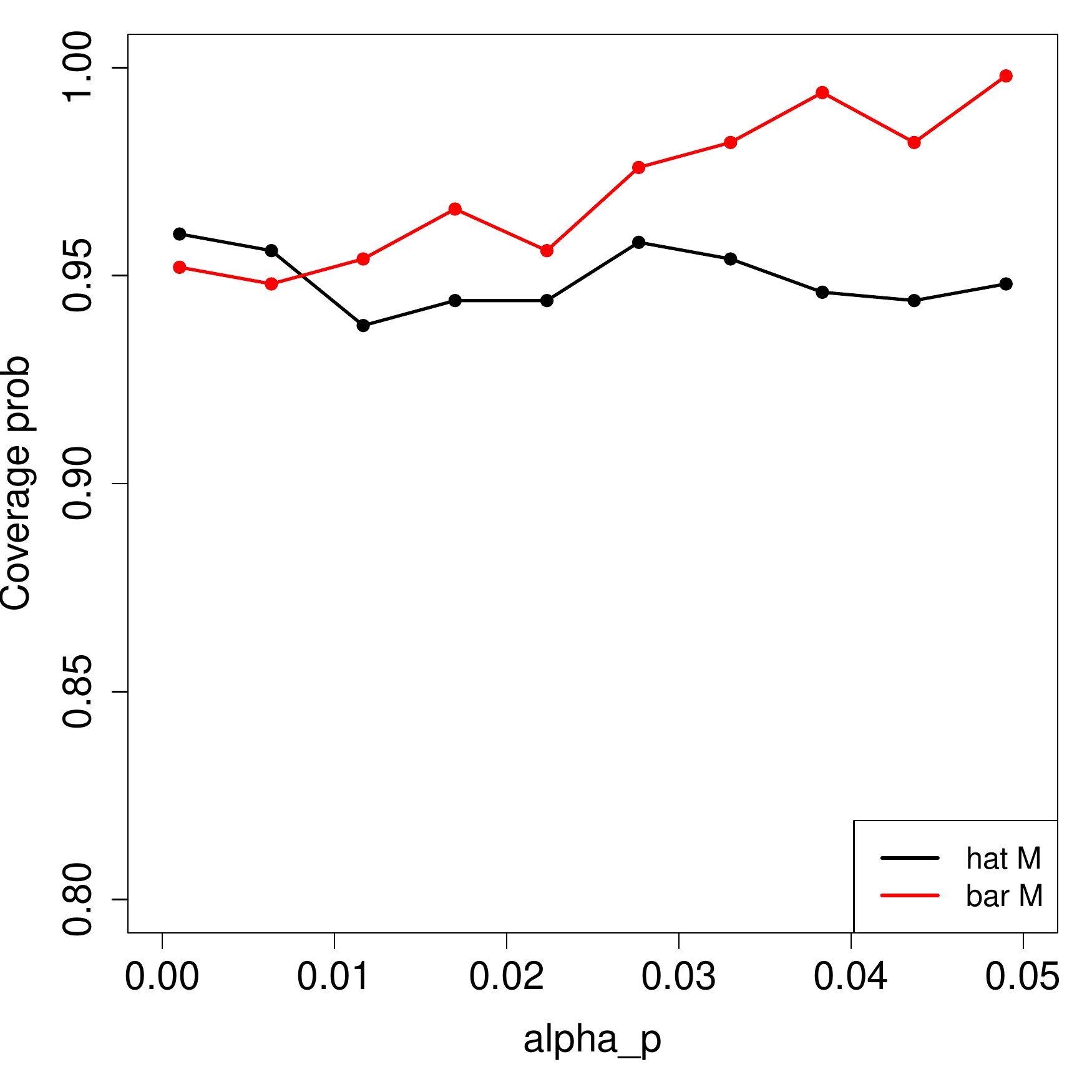}}
   \subfigure[$SNR=9$]{\includegraphics[scale=0.45]{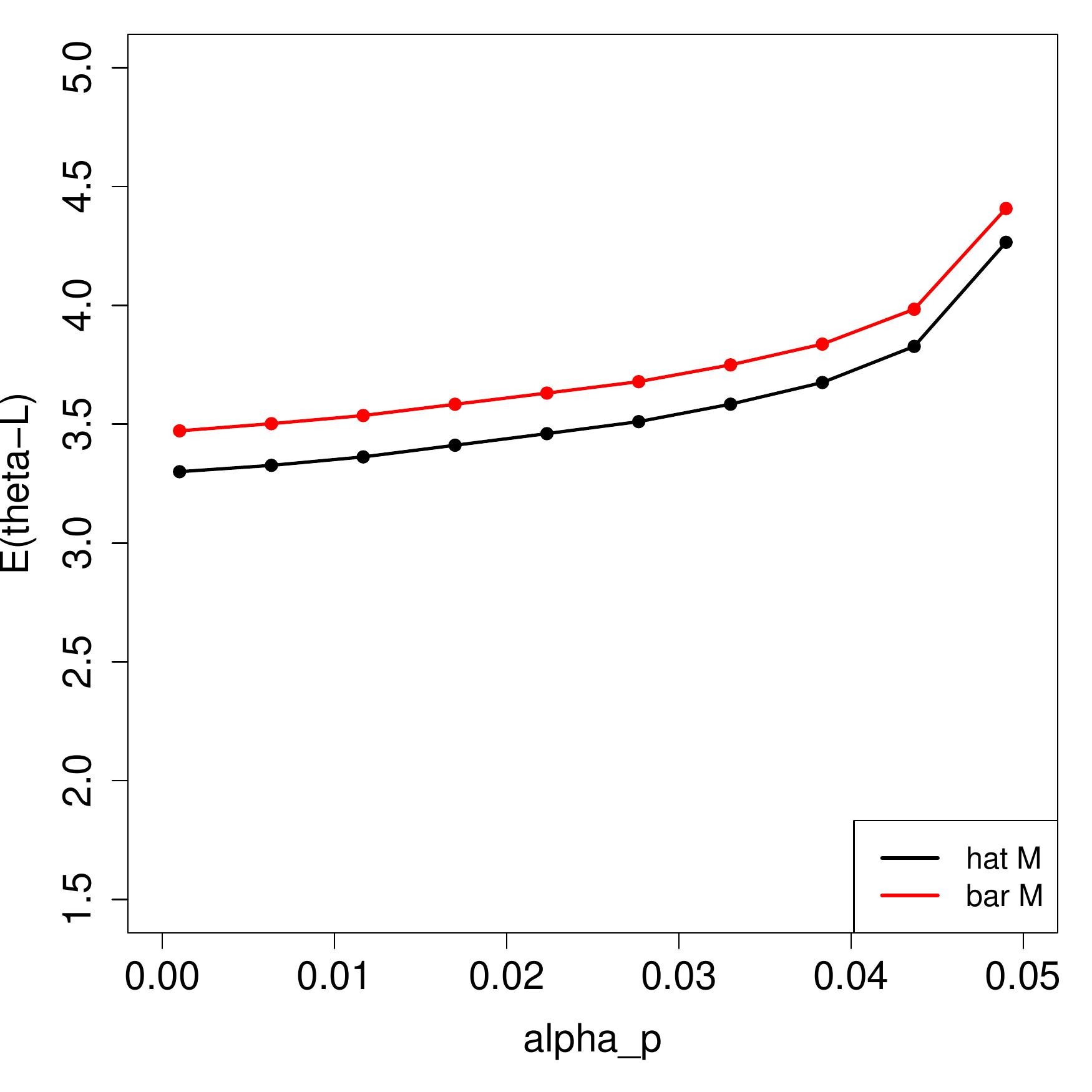}}\\  
   \caption{\label{fig3} Sensitivity analysis of coverage probability and the average distance $(\theta_j-L_j)$ with respect to $\alpha'$ (alpha$\_$p). }\vspace{-3mm}
   \end{figure}

\section{Proof}\label{sec_proof}

Note that, for notational simplicity, the constant $C$ may differ from line to line in the proof.

\subsection{Proof of Theorem \ref{themlower}}
Denote $\partial \Theta^+(s,a)=\{\btheta\in\RR^d: \|\btheta\|_0= s, \theta_j=a ~\textrm{for}~ \forall j, \theta_j\neq 0\}$. We know that
\begin{equation}
\sup_{\btheta\in\Theta^+(s,a)}\PP_{\btheta}( \supp(\btheta) \not\subseteq \hat S )\geq \sup_{\btheta\in\partial\Theta^+(s,a)}\PP_{\btheta}( \supp(\btheta) \not\subseteq \hat S )=1-\inf_{\btheta\in\partial\Theta^+(s,a)}\PP_{\btheta}( \supp(\btheta) \subseteq \hat S ).
\label{pfthemlower0}
\end{equation}
Since $\supp(\btheta)\subseteq \hat S $ is equivalent to the fact $j\in\hat S $ for any $j\in \supp(\btheta)$, we have $\PP_{\btheta}( \supp(\btheta) \subseteq \hat S )=\PP_{\btheta}(\cap_{j\in \supp(\btheta) }\{j\in\hat S \})=\prod_{j\in \supp(\btheta) }\PP_{\theta_j}(j\in\hat S )$, where the last step follows from the definition of the set $\cF(\delta)$. For notational simplicity, we denote $S=\supp(\btheta)$. We have
\begin{align}
\sup_{\btheta\in\partial\Theta^+(s,a)}\PP_{\btheta}( S \not\subseteq \hat S )&\geq \frac{1}{|\partial\Theta^+(s,a)|}\sum_{\btheta\in \partial\Theta^+(s,a)} \PP_{\btheta}( S \not\subseteq \hat S )\nonumber\\
&=\frac{1}{|\partial\Theta^+(s,a)|}\sum_{\btheta\in \partial\Theta^+(s,a)} \PP_{\btheta}(\cup_{j\in S }\{j\notin\hat S \})\nonumber\\
&=\frac{1}{|\partial\Theta^+(s,a)|}\sum_{\btheta\in \partial\Theta^+(s,a)} \Big(\sum_{j_1\in S }\PP_{\theta_{j_1}}(j_1\notin\hat S )\prod_{j\neq j_1\in S }\PP_{\theta_j}(j\in\hat S )\nonumber\\
&~~+\sum_{j_1\neq j_2\in S }\PP_{\theta_{j_1}}(j_1\notin\hat S )\PP_{\theta_{j_2}}(j_2\notin\hat S )\prod_{j\neq j_1,j_2\in S }\PP_{\theta_j}(j\in\hat S )+...+\prod_{j\in S }\PP_{\theta_j}(j\notin\hat S )\Big)\nonumber\\
&\geq \frac{t}{|\partial\Theta^+(s,a)|}\sum_{\btheta\in \partial\Theta^+(s,a)} \Big(\sum_{j_1\in S }\PP_{\theta_{j_1}}(j_1\notin\hat S )+\sum_{j_1\neq j_2\in S }\PP_{\theta_{j_1}}(j_1\notin\hat S )\PP_{\theta_{j_2}}(j_2\notin\hat S )\nonumber\\
&~~+...+\prod_{j\in S }\PP_{\theta_j}(j\notin\hat S )\Big),\label{pfthemlower1}
\end{align}
where $t=\inf_{\btheta\in\partial\Theta^+(s,a)}\prod_{j\in S }\PP_{\theta_j}(j\in\hat S )$. Define $u=\PP_a(j\notin\hat S )$, where $\PP_a$ denotes the probability of $X_j\sim N(a,\sigma^2)$. Note that $|\partial\Theta^+(s,a)|={d\choose s}$. Consider the $k$th term in (\ref{pfthemlower1}) $(1\leq k\leq s)$,
\begin{align*}
&\frac{t}{|\partial\Theta^+(s,a)|}\sum_{\btheta\in \partial\Theta^+(s,a)} \sum_{j_1\neq j_2..\neq j_k\in S }\prod_{m=1}^k\PP_{\theta_{j_m}}(j_m\notin\hat S )\\
&=t{d\choose s}^{-1}{d\choose k}{d-k\choose s-k} u^k=t{s\choose k} u^k.
\end{align*}
Thus, by taking the infimum, (\ref{pfthemlower1}) reduces to
\begin{equation}
\inf_{\hat S \in\cF(\delta)}\sup_{\btheta\in\partial\Theta^+(s,a)}\PP_{\btheta}( S \not\subseteq \hat S )\geq \inf_{\hat S \in\cF(\delta)} t\sum_{k=1}^s {s\choose k} u^k=\inf_{\hat S \in\cF(\delta)} t[(1+u)^s-1].\label{pfthemlower2}
\end{equation}
Next, we consider the infimum of $(1+u)^s$ over all possible $\hat S \in\cF(\delta)$. Then
$$
\inf_{\hat S \in\cF(\delta)}(1+\PP_a(j\notin\hat S ))^s=(1+\inf_{\hat S \in\cF(\delta)}\PP_a(j\notin\hat S ))^s.
$$
Since $j\in\hat S $ only depends on $X_j$, we can denote $j\in\hat S $ by $T(X_j)=1$ for some function $T(\cdot)$. Then Neyman-Pearson lemma implies that the infimum of $\PP_a(T(X_j)=0)$ over all possible $T(\cdot)$ such that $\PP_0(T(X_j)=1)\leq 1-\delta$ is attained by the likelihood ratio test of $X_j\sim N(0,\sigma^2)$ versus $X_j\sim N(a,\sigma^2)$. After some simple calculation, we find that the optimal $T(X_j)$ is
$$
T_{opt}(X_j)=I(\frac{\phi(X_j-a)}{\phi(X_j)}\geq c), ~~\textrm{where}~~c=\exp\Big\{\frac{a}{\sigma}\Phi^{-1}(\delta)-\frac{a^2}{2\sigma^2}\Big\}
$$
and $\phi(\cdot)$ is the pdf of the standard normal distribution. With this $T_{opt}(X_j)$, $\inf_{\hat S \in\cF(\delta)}\PP_a(j\notin\hat S )=\Delta$, where  $\Delta=\Phi(\Phi^{-1}(\delta)-\frac{a}{\sigma})$. Plugging into (\ref{pfthemlower2}), we obtain
$$
\inf_{\hat S \in\cF(\delta)}\sup_{\btheta\in\partial\Theta^+(s,a)}\PP_{\btheta}( \supp(\btheta)\not\subseteq \hat S )\geq t[(1+\Delta)^s-1]. 
$$
As $\inf_{\hat S \in\cF(\delta)}\sup_{\btheta\in\partial\Theta^+(s,a)}\PP(\supp(\btheta) \not\subseteq \hat S )\geq 1-t$ holds by (\ref{pfthemlower0}), optimizing over $t$ we obtain 
$$
\inf_{\hat S \in\cF(\delta)}\sup_{\btheta\in\Theta^+(s,a)}\PP_{\btheta}( \supp(\btheta) \not\subseteq \hat S )\geq 1-\frac{1}{(\Delta+1)^s}.
$$
This completes the proof of (\ref{low}). By (\ref{alower}), $\Delta\ge c_s/s$. When $c_s/s\rightarrow 0$, $\log (1+c_s/s)> (1-\epsilon) c_s/s$ for some constant $0<\epsilon<1$. Thus, 
$$
(\Delta+1)^s=\exp(s\log (1+\Delta))\ge\exp(s\log (1+c_s/s))> \exp((1-\epsilon)c_s)\rightarrow\infty,
$$
as $c_s\rightarrow\infty$ and $c_s/s\rightarrow 0$. Clearly, (\ref{low2}) follows from the non-asymptotic bound (\ref{low}).

\subsection{Proof of Theorem \ref{themupper}}
To show $\hat S_{\alpha'}\in\cF(\delta)$, notice that
$$
\PP_0(j\in\hat S_{\alpha'} )=\PP_0\Big(X_j/\sigma\geq \max(\Phi^{-1}(\frac{\alpha'}{s})+a/\sigma,\Phi^{-1}(\delta))\Big)\leq 
\PP_0(X_j/\sigma\geq \Phi^{-1}(\delta))=1-\delta.
$$
The event $ \supp(\btheta)\not\subseteq \hat S_{\alpha'} $ is equivalent to that there exists $j\in[d]$ such that $j\in \supp(\btheta) $ and $j\notin\hat S_{\alpha'} $. Then
\begin{align*}
\PP_{\btheta}( \supp(\btheta) \not\subseteq \hat S_{\alpha'} )&=\PP_{\btheta}(\exists j\in [d], j\in \supp(\btheta) , j\notin\hat S_{\alpha'} )\\
&\leq\sum_{j: \theta_j\neq 0} \PP_{\theta_j}(j\notin\hat S_{\alpha'} )\\
&=\sum_{j: \theta_j\neq 0} \PP_{\theta_j}(X_j\leq \sigma\Phi^{-1}(\frac{\alpha'}{s})+a),
\end{align*}
where the last line follows from the condition that $a\geq \sigma(\Phi^{-1}(\delta)-\Phi^{-1}(\alpha'/s))$. Since $X_j\sim N(\theta_j,\sigma^2)$, we have $\PP(X_j\leq t)=\Phi(\frac{t-\theta_j}{\sigma})$. Plugging into the above expression, we obtain
$$
\PP_{\btheta}( \supp(\btheta) \not\subseteq \hat S_{\alpha'} )\leq \sum_{j: \theta_j\neq 0} \Phi(\Phi^{-1}(\frac{\alpha'}{s})+\frac{a-\theta_j}{\sigma})\leq \|\btheta\|_0\frac{\alpha'}{s}=\alpha'.
$$
as $\theta_j\geq a$ for $\theta_j\neq 0$. This completes the proof of Theorem \ref{themupper}.

\subsection{Proof of Theorem \ref{thm_spraseCI}}
We first note that $\sup_{\btheta\in\Theta^+(s,a), \theta_j=0}\PP_{\btheta}(j\in \hat S_{\alpha'})\leq 1-\delta$ holds by Theorem \ref{themupper}. In the following, we bound $\PP_{\btheta}(\btheta\notin \hat M_{\alpha'})$ by intersecting with the event $\supp(\btheta) \subseteq \hat S_{\alpha'} $, 
\begin{align}
\PP_{\btheta}(\btheta\notin \hat M_{\alpha'})&\leq \PP_{\btheta}(\btheta\notin \hat M_{\alpha'},  \supp(\btheta)\subseteq \hat S_{\alpha'} )+ \PP_{\btheta}(\supp(\btheta)\not\subseteq \hat S_{\alpha'} )\nonumber\\
&= \PP_{\btheta}(\exists j\in \hat S_{\alpha'}, \theta_j<  \hat L_j, \supp(\btheta) \subseteq \hat S_{\alpha'} )+ \PP_{\btheta}(\supp(\btheta)\not\subseteq \hat S_{\alpha'} )\nonumber\\
&\leq \PP_{\btheta}(\exists j\in \hat S_{\alpha'}, \theta_j<  \hat L_j)+ \PP_{\btheta}(\supp(\btheta)\not\subseteq \hat S_{\alpha'} ).\label{eqpfthm_spraseCI0}
\end{align}
By Theorem \ref{themupper} and $a/\sigma\geq \kappa^*$, $\PP_{\btheta}(\supp(\btheta) \not\subseteq \hat S_{\alpha'} )\leq \alpha'$. The first term can be further bounded as
\begin{align}
&\PP_{\btheta}(\exists j\in \hat S_{\alpha'}, \theta_j< \hat L_j)\\
&\leq \PP_{\btheta}(\exists j\in  \supp(\btheta) , \theta_j<\hat L_j)+\PP_{\btheta}(\exists j\in \hat S_{\alpha'} \backslash  \supp(\btheta) , \theta_j< \hat L_j):=I_1+I_2.\label{eqpfthm_spraseCI1}
\end{align}
Write $u$ for $\hat u_{\alpha'}$. For $I_1$, by noting that $\theta_j< \max(X_j- u\sigma,0)$ is equivalent to $Z_j<  u$ where $Z_j=\frac{X_j-\theta_j}{\sigma}\sim N(0,1)$, we have 
\begin{equation}
I_1\leq \sum_{j\in \supp(\btheta)} \PP_{\btheta}(Z_j> u)=s (1-\Phi(u)). \label{eqpfthm_spraseCI2}
\end{equation}
To bound $I_2$, noting that $j\notin \supp(\btheta)$ implying $\theta_j=0$, we have
$$
I_2=\PP(\exists j\notin \supp(\btheta), Z_j\geq \Phi^{-1}(\frac{\alpha'}{s})+\frac{a}{\sigma}, Z_j> u)\leq \sum_{j\notin \supp(\btheta)}\PP( Z_j\geq \Phi^{-1}(\frac{\alpha'}{s})+\frac{a}{\sigma}, Z_j> u).
$$
To bound the last probability, we now consider the following two cases. 

(1). When $a/\sigma\in R_L$, by setting $u=\Phi^{-1}(1-\frac{\alpha-\alpha'}{d})$, we can easily verify that $\Phi^{-1}(\frac{\alpha'}{s})+\frac{a}{\sigma}\leq u$. Thus, $I_2\leq (d-s) (1-\Phi(u))$.
Together with (\ref{eqpfthm_spraseCI0}), (\ref{eqpfthm_spraseCI1}), (\ref{eqpfthm_spraseCI2}), we have
$$
\PP_{\btheta}(\btheta\notin \hat M_{\alpha'})\leq d(1-\Phi(u))+\alpha'= \alpha.
$$

(2). When $a/\sigma\in R_H$, by setting $u=\Phi^{-1}(1-\frac{\alpha-\alpha'-(d-s)(1-\eta^+)}{s})$, we can easily verify that $\Phi^{-1}(\frac{\alpha'}{s})+\frac{a}{\sigma}> u$. Thus, it implies $I_2\leq (d-s)(1-\eta^+)$, and finally we have
$$
\PP_{\btheta}(\btheta\notin \hat M_{\alpha'})\leq (d-s)(1-\eta^+)+s(1-\Phi(u))+\alpha'= \alpha.
$$

\subsection{Proof of Corollary \ref{cor_asym}}
When $a/\sigma<\bar \kappa$, it holds that
$$
\sup_{\btheta\in\Theta^+(s,a), \theta_j=0}\PP_{\btheta}(j\in \bar S_{\alpha'})=\PP_{\theta_j=0} (X_j/\sigma\geq \Phi^{-1}(\delta))= 1-\delta,
$$
and when $a/\sigma\geq\bar \kappa$ we have
$$
\sup_{\btheta\in\Theta^+(s,a), \theta_j=0}\PP_{\btheta}(j\in \bar S_{\alpha'})=\PP_{\theta_j=0}\Big(X_j/\sigma\geq \sqrt{2\log (\frac{2(d-s)}{(\alpha-\alpha')C_{d-s,\alpha-\alpha'}})}\Big)\leq 1-\delta.
$$
So, it also holds that $\sup_{\btheta\in\Theta^+(s,a), \theta_j=0}\PP_{\btheta}(j\in \bar S_{\alpha'})\leq 1-\delta$. 

In the following, we first focus on the case $a/\sigma<\bar \kappa$. Note that 
\begin{align*}
\PP_{\btheta}( \supp(\btheta) \not\subseteq \bar S_{\alpha'} )&\leq\sum_{j: \theta_j\neq 0} \PP_{\theta_j}(X_j\leq \sigma \Phi^{-1}(\delta))\\
&\leq\sum_{j: \theta_j\neq 0} \PP_{\theta_j}(\frac{X_j-\theta_j}{\sigma}\leq \frac{\sigma \Phi^{-1}(\delta)-a}{\sigma})\\
&\leq \sum_{j: \theta_j\neq 0} \PP_{\theta_j}\Big(\frac{X_j-\theta_j}{\sigma}\leq -\sqrt {2\log (\frac{s}{C_{s,\alpha'}\alpha'})}\Big),
\end{align*}
where the last step follows from $a/\sigma\geq \Phi^{-1}(\delta)+\sqrt {2\log (\frac{s}{C_{s,\alpha'}\alpha'})}$. By the tail probability in Lemma \ref{lemtail}, it yields for any $0<\alpha'<\alpha$
\begin{align}
&\lim_{d,s\rightarrow\infty}\sup_{\btheta\in\Theta^+(s,a)}\PP_{\btheta}(\supp(\btheta) \not\subseteq \bar S_{\alpha'} )-\alpha'\nonumber\\
&\leq \lim_{d,s\rightarrow\infty} s\sqrt{\frac{2}{\pi}}\frac{1}{2\sqrt {2\log (\frac{s}{C_{s,\alpha'}\alpha'})}}\exp\Big(-\log (\frac{s}{C_{s,\alpha'}\alpha'})\Big)-\alpha'\nonumber\\
&=\lim_{d,s\rightarrow\infty}\sqrt{\frac{2}{\pi}}\frac{\alpha'C_{s,\alpha'}}{2\sqrt {2\log (\frac{s}{C_{s,\alpha'}\alpha'})}}-\alpha'= 0.\label{eq_cor_asym1}
\end{align}
By the proof of Theorem \ref{thm_spraseCI}, we can similarly show that
\begin{align}\label{eq_cor_asym2}
\PP_{\btheta}(\btheta\notin \bar M_{\alpha'})\leq d\Big(1-\Phi\Big(\sqrt{2\log (\frac{d}{(\alpha-\alpha')C_{d,\alpha-\alpha'}})}\Big)\Big)+\PP_{\btheta}( \supp(\btheta) \not\subseteq \bar S_{\alpha'} ).
\end{align}
By taking the limit $d,s\rightarrow\infty$, similar to (\ref{eq_cor_asym1}), the tail bound in Lemma \ref{lemtail} implies 
$$\lim_{d,s\rightarrow\infty}\sup_{\btheta\in\Theta^+(s,a)}\PP_{\btheta}(\btheta\notin \bar M_{\alpha'})\leq (\alpha-\alpha')+\alpha'=\alpha.
$$ 
When $a/\sigma\geq\bar \kappa$, it is easily seen that
\begin{align}
\PP_{\btheta}( \supp(\btheta) \not\subseteq \bar S_{\alpha'} )&\leq\sum_{j: \theta_j\neq 0} \PP_{\theta_j}\Big(X_j/\sigma\leq  \sqrt{2\log (\frac{2(d-s)}{(\alpha-\alpha')C_{d-s,\alpha-\alpha'}})}\Big)\nonumber\\
&\leq \sum_{j: \theta_j\neq 0} \PP_{\theta_j}\Big(\frac{X_j-\theta_j}{\sigma}\leq -\sqrt {2\log (\frac{s}{C_s\alpha'})}\Big).\label{eq_cor_asym11}
\end{align}
As a result, (\ref{eq_cor_asym1}) still holds. In the following, we consider two cases separately. 

Case (1) $d\geq 2s$. Recall the way of controlling the term $I_2$ in the proof of Theorem \ref{thm_spraseCI}. In this case, $j$ is selected if $Z_j\geq t$, where 
$$
t=\sqrt{2\log (\frac{2(d-s)}{(\alpha-\alpha')C_{d-s,\alpha-\alpha'}})}.
$$ 
With the monotonicity of the function $\log x-\frac{1}{2}\log\log x$, this term is no smaller than $\bar u_{\alpha'}$ as $d-s\geq s$. Thus, we can show that by the proof of Theorem \ref{thm_spraseCI},
\begin{align*}
\PP_{\btheta}(\btheta\notin \bar M_{\alpha'})&\leq s(1-\Phi(\bar u_{\alpha'}))+\sum_{j\notin \supp(\btheta)}\PP_0\big({X_j}/{\sigma}\geq t\big)+\PP_{\btheta}(\supp(\btheta) \not\subseteq \bar S_{\alpha'} )\\
&=s\Big(1-\Phi\Big(\sqrt{2\log (\frac{2s}{(\alpha-\alpha')C_{s,\alpha-\alpha'}})}\Big)\Big)+(d-s)(1-\Phi(t))+\PP_{\btheta}( \supp(\btheta)\not\subseteq \bar S_{\alpha'} ).
\end{align*}
Similar to (\ref{eq_cor_asym1}), 
\begin{align*}
&\lim_{d,s\rightarrow\infty} s(1-\Phi(\bar u_{\alpha'}))-\frac{\alpha-\alpha'}{2}\\
&=\lim_{d,s\rightarrow\infty}  s\sqrt{\frac{2}{\pi}}\frac{1}{2\sqrt {2\log (\frac{2s}{C_{s,\alpha-\alpha'}(\alpha-\alpha')})}}\exp\Big(-\log (\frac{2s}{C_{s,\alpha-\alpha'}(\alpha-\alpha')})\Big)-\frac{\alpha-\alpha'}{2}=0,
\end{align*}
and
\begin{align*}
&\lim_{d,s\rightarrow\infty} (d-s)(1-\Phi(t))-\frac{\alpha-\alpha'}{2}\\
&=\lim_{d,s\rightarrow\infty}(d-s)\sqrt{\frac{2}{\pi}}\frac{1}{2\sqrt {2\log (\frac{2(d-s)}{C_{d-s,\alpha-\alpha'}(\alpha-\alpha')})}}\exp\Big(-\log (\frac{2(d-s)}{C_{d-s,\alpha-\alpha'}(\alpha-\alpha')})\Big)-\frac{\alpha-\alpha'}{2}=0.
\end{align*}
We obtain
$$
\lim_{d,s\rightarrow\infty}\sup_{\btheta\in\Theta^+(s,a)}\PP_{\btheta}(\btheta\notin \bar M_{\alpha'})\leq \frac{\alpha-\alpha'}{2}+\frac{\alpha-\alpha'}{2}+\alpha'=\alpha.
$$

Case (2) $d<2s$. Unlike the previous case, we now have $t<\bar u_{\alpha'}$. Thus,
\begin{align*}
\PP_{\btheta}(\btheta\notin \bar M_{\alpha'})&\leq d(1-\Phi(\bar u_{\alpha'}))+\PP_{\btheta}(\supp(\btheta) \not\subseteq \bar S_{\alpha'} )\\
&=d\Big(1-\Phi\Big(\sqrt{2\log (\frac{2s}{(\alpha-\alpha')C_{s,\alpha-\alpha'}})}\Big)\Big)+\PP_{\btheta}( \supp(\btheta)\not\subseteq \bar S_{\alpha'} ).
\end{align*}
Note that
\begin{align*}
&\lim_{d,s\rightarrow\infty} d(1-\Phi(\bar u_{\alpha'}))-(\alpha-\alpha')\\
&=\lim_{d,s\rightarrow\infty}  \frac{d}{2s}\sqrt{\frac{2}{\pi}}\frac{C_{s,\alpha-\alpha'}(\alpha-\alpha')}{2\sqrt {2\log (\frac{2s}{C_{s,\alpha-\alpha'}(\alpha-\alpha')})}}-(\alpha-\alpha')\leq 0,
\end{align*}
where we use $d<2s$ in the last step. This implies  $\lim_{d,s\rightarrow\infty}\sup_{\btheta\in\Theta^+(s,a)}\PP_{\btheta}(\btheta\notin \bar M_{\alpha'})\leq \alpha$. 

\subsection{Proof of Theorem \ref{thm_minimax_onesided}}

Define $\Theta(A)=\{\btheta\in\RR^d: \|\btheta\|_0=A, \theta_j=\rho, ~\textrm{for any $\theta_j\neq 0$}\}$, where $0<A\leq s$ and $\rho$ is an arbitrary positive quantity that is $\rho\geq a$. Then, $\Theta(A)$ is contained in the parameter space $ \Theta^+(s,a)$. For any $M$ in $\cM_+(m,\delta)$, we use $CI_j$ to denote the confidence interval for $\theta_j$. Following the similar arguments in the proof of Theorem \ref{themlower}, we have
\begin{align}
\sup_{\btheta\in \Theta^+(s,a)} \PP_{\btheta}(\btheta\notin M)&\geq {d\choose A}^{-1}\sum_{\btheta\in \Theta(A)} \PP_{\btheta}( \btheta\notin M)\nonumber\\
&\geq{d\choose A}^{-1}\sum_{\btheta\in \Theta(A)} \Big(\sum_{j=1}^d\PP_{\theta_j}(\theta_j\notin CI_j )\prod_{j'\neq j }\PP_{\theta_{j'}}(\theta_{j'}\in CI_{j'} )\Big)\nonumber\\
&\geq t{d\choose A}^{-1}\sum_{j=1}^d \sum_{\btheta\in \Theta(A)} \PP_{\theta_j}(\theta_j\notin CI_j )\nonumber\\
&= t{d\choose A}^{-1}\sum_{j=1}^d \Big(\sum_{\btheta\in \Theta(A), \theta_j=0} \PP_{0}(0\notin CI_j )+\sum_{\btheta\in \Theta(A), \theta_j=\rho} \PP_{\rho}(\rho\notin CI_j )\Big)\nonumber\\
&=t\sum_{j=1}^d  \Big(\frac{d-A}{d} \PP_{0}(0\notin CI_j )+\frac{A}{d}\PP_{\rho}(\rho\notin CI_j )\Big),\label{eq_thm_minimax_onesided1}
\end{align}
where $t=\inf_{\btheta\in\Theta(A)}\prod_{j=1}^d\PP_{\theta_j}(\theta_j\in CI_j)$. Furthermore, we can control the last term in (\ref{eq_thm_minimax_onesided1}) as follows,
\begin{equation}\label{eq_thm_minimax_onesided2}
\PP_{\rho}(\rho\notin CI_j )\geq \PP_{\rho}(L_j>\rho)=\PP_{\rho}(L_j=0)+[\PP_{\rho}(L_j>\rho)-\PP_{\rho}(L_j=0)].
\end{equation}
Since $M\in\cM_+(m,\delta)$ implies $ \sup_{1\leq j\leq d}\sup_{\btheta\in \Theta(s,a)}\EE_{\btheta}(\theta_j-L_j)\leq m$, by taking $\theta_j=\rho$ we have
\begin{align*}
\rho-m\leq \EE_\rho L_j&\leq \rho \PP_\rho(0<L_j\leq \rho)+2\rho \PP_\rho (\rho<L_j\leq 2\rho)+\EE_\rho L_j I(L_j>2\rho)\\
&= \rho \PP_\rho(0<L_j\leq \rho)+2\rho \PP_\rho (\rho<L_j)+\EE_\rho (L_j-2\rho) I(L_j>2\rho).
\end{align*}
Then, we can plug $\PP_\rho(0<L_j\leq \rho)=1-\PP_\rho(L_j=0)-\PP_\rho(L_j>\rho)$ into the above display, which can reduce to
\begin{equation}\label{eq_thm_minimax_onesided3}
\PP_{\rho}(L_j>\rho)-\PP_{\rho}(L_j=0)\geq -\frac{m+\EE_\rho (L_j-2\rho) I(L_j>2\rho)}{\rho}.
\end{equation}
Our next step is to upper bound $\EE_\rho (L_j-2\rho) I(L_j>2\rho)$. Recall that we assume $L_j\leq X_j$ whenever $X_j\geq 0$. Thus
\begin{align*}
\EE_\rho (L_j-2\rho) I(L_j>2\rho)&\leq \EE_\rho (X_j\vee 0-2\rho) I(X_j\vee 0>2\rho)\\
&= \EE_\rho (X_j-2\rho) I(X_j>2\rho)=\sigma \EE N I(N>\frac{\rho}{\sigma})-\rho \PP(N>\frac{\rho}{\sigma}),
\end{align*}
where $N\sim N(0,1)$. By the tail bound in Lemma \ref{lemtail} and some simple algebra, 
\begin{align*}
\EE_\rho (L_j-2\rho) I(L_j>2\rho) &\leq \sqrt{\frac{1}{2\pi}}\sigma \exp\Big(-\frac{1}{2}(\frac{\rho}{\sigma})^2\Big)-\sqrt{\frac{2}{\pi}} \exp\Big(-\frac{1}{2}(\frac{\rho}{\sigma})^2\Big)\frac{\rho}{\frac{\rho}{\sigma}+\sqrt{4+\frac{\rho^2}{\sigma^2}}}\\
&\leq \sqrt{\frac{1}{2\pi}}\sigma \exp\Big(-\frac{1}{2}(\frac{\rho}{\sigma})^2\Big)\frac{\sqrt{1+4(\sigma/\rho)^2}-1}{\sqrt{1+4(\sigma/\rho)^2}+1}:=R.
\end{align*}
Combining with (\ref{eq_thm_minimax_onesided1}), (\ref{eq_thm_minimax_onesided2}) and (\ref{eq_thm_minimax_onesided3}), we have shown that
\begin{align*}
\sup_{\btheta\in \Theta^+(s,a)} \PP_{\btheta}(\btheta\notin M) &\geq t\Big\{\sum_{j=1}^d  \Big(\frac{d-A}{d} \PP_{0}(0\notin CI_j )+\frac{A}{d}\PP_{\rho}(0 \in CI_j )\Big)-A\frac{m+R}{\rho}\Big\}\\
& \geq tA\Big\{\inf_{T\in\{0,1\}}  \Big(\frac{d-A}{A} \EE_{0}(1-T)+\EE_{\rho}T\Big)-\frac{m+R}{\rho}\Big\}.
\end{align*}
Here, $T$ denotes an arbitrary test function from $\RR$ to $\{0,1\}$. By the Neyman-Pearson lemma, the optimal test function is given by 
$$
T_{opt}(x)=I\Big(x\leq \frac{\rho}{2}+\frac{\sigma^2}{\rho}\log (\frac{d-A}{A})\Big).
$$
With the optimal test function, lower bound reduces to 
\begin{align*}
\sup_{\btheta\in \Theta^+(s,a)} \PP_{\btheta}(\btheta\notin M) & \geq tA\Big\{g(d,A,\rho)-\frac{m+R}{\rho}\Big\}_+,
\end{align*}
where 
$$
g(d,A,\rho)=\frac{d-A}{A}\Phi\Big(-\frac{\rho}{2\sigma}-\frac{\sigma}{\rho}\log (\frac{d}{A}-1)\Big)+\Phi\Big(-\frac{\rho}{2\sigma}+\frac{\sigma}{\rho}\log (\frac{d}{A}-1)\Big).
$$
We also notice that
$$
\sup_{\btheta\in \Theta^+(s,a)} \PP_{\btheta}(\btheta\notin M)\geq 1-\inf_{\btheta\in\Theta(A)}\PP_{\btheta}(\btheta\in M)=1-\inf_{\btheta\in\Theta(A)}\prod_{j=1}^d\PP_{\theta_j}(\theta_j\in CI_j)=1-t.
$$
We then optimize the lower bound with respect to $t$, which leads to
$$
\sup_{\btheta\in \Theta^+(s,a)} \PP_{\btheta}(\btheta\notin M)\geq G(d, A, \rho, m). 
$$
As the above lower bound holds for any $0< A\leq s$ and $\rho\geq a$, we obtain
$$
\sup_{\btheta\in \Theta^+(s,a)} \PP_{\btheta}(\btheta\notin M)\geq \sup_{\rho\geq a,A\leq s} G(d, A, \rho, m). 
$$
The rest of the proof focuses on showing
\begin{equation}\label{eq_thm_minimax_onesided4}
\sup_{\btheta\in \Theta^+(s,a)} \PP_{\btheta}(\btheta\notin M)\geq \sup_{\rho\geq 0, B\leq s} G(s, B, \rho, m). 
\end{equation}
We first define a $s$-dimensional vector $\ba=(a,a,...,a)\in\RR^s$, and $\Theta_s(B)=\{\btheta\in\RR^s: \|\btheta-\ba\|_0=B, \theta_j=a+\rho~\textrm{for}~\theta_j\neq a\}$, where $0<B\leq s$ and $\rho$ is an arbitrary positive quantity. Then, we define the parameter set $\Theta'(B)=\{(\btheta, 0,...,0)\in\RR^d: \btheta\in  \Theta_s(B)\}$, which is contained in the parameter space $ \Theta^+(s,a)$. In this case, we only perturb the parameters that are nonzero. Let $\btheta_{[s]}$ and $M_{[s]}$ denote the first $s$ entries of $\btheta$ and the confidence intervals for $\btheta_{[s]}$. Similar to the previous argument, we can show that
\begin{align*}
\sup_{\btheta\in \Theta^+(s,a)} \PP_{\btheta}(\btheta\notin M)&\geq \sup_{\btheta\in \Theta^+(s,a)} \PP_{\btheta}(\btheta_{[s]}\notin M_{[s]})\\
&\geq{s\choose B}^{-1}\sum_{\btheta\in \Theta'(B)} \Big(\sum_{j=1}^s\PP_{\theta_j}(\theta_j\notin CI_j )\prod_{j'\neq j \in[s]}\PP_{\theta_{j'}}(\theta_{j'}\in CI_{j'} )\Big)\nonumber\\
&\geq t'{s\choose B}^{-1}\sum_{j=1}^s \Big(\sum_{\btheta\in \Theta'(B), \theta_j=a} \PP_{a}(a\notin CI_j )+\sum_{\btheta\in \Theta'(B), \theta_j=a+\rho} \PP_{a+\rho}(a+\rho\notin CI_j )\Big)\nonumber\\
&=t'\sum_{j=1}^s  \Big(\frac{s-B}{s} \PP_{a}(a\notin CI_j )+\frac{B}{s}\PP_{a+\rho}(a+\rho\notin CI_j )\Big),
\end{align*}
where $t'=\inf_{\btheta\in\Theta'(B)}\prod_{j=1}^s\PP_{\theta_j}(\theta_j\in CI_j)$. In addition, 
$$
\PP_{a+\rho}(a+\rho\notin CI_j )-\PP_{a+\rho}(a\in CI_j )\geq \PP_{a+\rho}(L_j>a+\rho)-\PP_{a+\rho}(0\leq L_j<a).
$$
To further lower bound the right hand side of the above display, we notice that
\begin{align*}
a+\rho-m&\leq \EE_{a+\rho} L_j\\
&\leq a \PP_{a+\rho}(0\leq L_j\leq a)+(a+\rho) \PP_{a+\rho} (a<L_j\leq a+\rho)\\
&~~~+(a+2\rho)\PP_{a+\rho}(a+\rho<L_j\leq a+2\rho) +\EE_{a+\rho}L_j I(L_j>a+2\rho)\\
&\leq a \PP_{a+\rho}(0\leq L_j\leq a)+(a+\rho) \{1-\PP_{a+\rho} (0\leq L_j\leq a)-\PP_{a+\rho} ( L_j> a+\rho)\}\\
&~~~+(a+2\rho)\PP_{a+\rho}(a+\rho<L_j) +\EE_{a+\rho}(L_j-(a+2\rho)) I(L_j>a+2\rho)\\
&=(a+\rho)-\rho \PP_{a+\rho}(0\leq L_j\leq a)+\rho\PP_{a+\rho}(a+\rho<L_j) +\EE_{a+\rho}(L_j-(a+2\rho)) I(L_j>a+2\rho).
\end{align*}
Thus, we have
$$
\PP_{a+\rho}(L_j>a+\rho)-\PP_{a+\rho}(0\leq L_j<a)\geq -\frac{m+\EE_{a+\rho} (L_j-a-2\rho) I(L_j>a+2\rho)}{\rho}.
$$
Finally, we also note that 
$$
\sup_{\btheta\in \Theta^+(s,a)} \PP_{\btheta}(\btheta_{[s]}\notin M_{[s]})\geq 1-\inf_{\btheta\in\Theta'(B)}\PP_{\btheta}(\btheta_{[s]}\in M_{[s]})=1-t'.
$$
The rest of the proofs are similar and therefore we omit the details. Together with Theorem \ref{themlower}, we  complete the proof.

\subsection{Proof of Corollary \ref{cor_minimax_onesided}}

Denote $a_*=\sigma\kappa_*$. When $a\leq a_*$, Corollary \ref{corlower1} holds. When $a_*<a\leq a_1:=\sigma\sqrt{2\log (d/A_d-1)}$, Theorem \ref{thm_minimax_onesided} implies
$$
\inf_{M \in\cM_+(m,\delta)} \sup_{\btheta\in \Theta^+(s,a)} \PP_{\btheta}(\btheta\notin M)\geq G(d,A_d,a_1,m)\geq G(d,A_d,a_1,m^*), 
$$
where $m^*=\sigma(\frac{1}{2}-\frac{W_d}{A_d})\sqrt{2\log (d/A_d-1)}+\frac{\sqrt{2}\sigma}{4\sqrt{\pi}}(1-\frac{A_d}{d-A_d})$ with $A_d, W_d$ defined as in the Corollary. By Lemma \ref{lemtail}
\begin{equation}\label{eq_pf_cor_minimax_onesided1}
g(d,A_d,a_1)=\frac{1}{2}+\frac{d-A_d}{A_d}\Phi(-\sqrt{2\log (d/A_d-1)})\geq \frac{1}{2}+\frac{1}{2\sqrt{\pi}}\frac{1}{\sqrt{\log (d/A_d-1)+2}}.
\end{equation}
In addition,
$$
\frac{m^*}{a_1}=\frac{1}{2}-\frac{W_d}{A_d}+\frac{1}{4\sqrt{\pi}}\frac{1}{\sqrt{\log (d/A_d-1)}}\Big(1-\frac{A_d}{d-A_d}\Big),
$$
and for $d, s$ large enough
\begin{align*}
\frac{R}{a_1}&=\frac{1}{\sqrt{2\pi}}\frac{A_d}{d-A_d}\frac{1}{\sqrt{2\log (d/A_d-1)}}\frac{\sqrt{1+2/\log (d/A_d-1)}-1}{\sqrt{1+2/\log (d/A_d-1)}+1}\\
&\leq \frac{1}{4\sqrt{\pi}}\frac{1}{\sqrt{\log (d/A_d-1)}}\frac{A_d}{d-A_d}.
\end{align*}
Thus, for $d$ large enough and as $\frac{d}{A_d}\rightarrow\infty$, 
\begin{align*}
&A_d[g(d,A_d,a_1)-(m^*+R)/a_1]\\
&\geq A_d[\frac{1}{2\sqrt{\pi}}\frac{1}{\sqrt{\log (d/A_d-1)+2}}+\frac{W_d}{A_d}-\frac{1}{4\sqrt{\pi}}\frac{1}{\sqrt{\log (d/A_d-1)}}\Big(1-\frac{A_d}{d-A_d}\Big)-\frac{R}{a_1}]\\
&=A_d[\frac{1}{2\sqrt{\pi}}\frac{1}{\sqrt{\log (d/A_d-1)+2}}+\frac{W_d}{A_d}-\frac{1}{4\sqrt{\pi}}\frac{1}{\sqrt{\log (d/A_d-1)}}]\\
&> W_d.
\end{align*}
Since $W_d\rightarrow\infty$, we have 
$$
\liminf_{d,s\rightarrow\infty} A_d[g(d,A_d,a_1)-(m^*+R)/a_1]=+\infty, 
$$
which further implies $\liminf_{d,s\rightarrow\infty}G(d,A_d,a_1,m^*)=1$.

Similarly, when $a\geq \sigma\sqrt{2\log (d/A-1)}$, Theorem \ref{thm_minimax_onesided} implies
$$
\inf_{M \in\cM_+(m,\delta)} \sup_{\btheta\in \Theta^+(s,a)} \PP_{\btheta}(\btheta\notin M)\geq G(s,B_s,\rho^*,m^{**}), 
$$
where $m^{**}=\sigma(\frac{1}{2}-\frac{V_s}{B_s})\sqrt{2\log (s/B_s-1)}+\frac{\sqrt{2}\sigma}{4\sqrt{\pi}}(1-\frac{B_s}{s-B_s})$ and $\rho^*=\sigma\sqrt{2\log (s/B_s-1)}$. Following the similar argument, it is shown that 
\begin{align*}
B_s[g(s,B_s,\rho^*)-(m^{**}+R)/\rho^*]&\geq B_s[\frac{1}{2\sqrt{\pi}}\frac{1}{\sqrt{\log (s/B_s-1)+2}}+\frac{V_s}{B_s}-\frac{R}{\rho^*}]\geq V_s.
\end{align*}
This implies $\liminf_{d,s\rightarrow\infty}G(s,B_s,\rho^*,m^{**})=1$. This completes the proof.

\subsection{Proof of Corollary \ref{cor_minimax_onesided2}}
{\bf Proof of (\ref{cor_minimax_onesided2_eq1}) and (\ref{cor_minimax_onesided2_eq3})}.  For simplicity, we follow the the same notations in the proof of Corollary \ref{cor_minimax_onesided} and Theorem  \ref{thm_minimax_onesided}. By the proof of Theorem \ref{thm_minimax_onesided} and $M\in \cM_+$, we have $ \PP_{\btheta}(\btheta\notin M) \geq 1-t$ and
$$
 \PP_{\btheta}(\btheta\notin M)\geq tA_d\Big\{g(d,A_d,\rho)-\frac{m+R}{\rho}\Big\}_+,
$$
where $m=\sup_{1\leq j\leq d}\sup_{\btheta\in \Theta^+(s,a)} \EE_{\btheta}(\theta_j-L_j)$. If $g(d,A_d,\rho)-\frac{m+R}{\rho}\leq 0$, then
\begin{equation}\label{eq_cor_minimax_onesided21}
m\geq \rho g(d,A_d,\rho)-R.
\end{equation}
Otherwise, we have
$$
 \PP_{\btheta}(\btheta\notin M)\geq tA_d\Big\{g(d,A_d,\rho)-\frac{m+R}{\rho}\Big\}\geq \PP_{\btheta}(\btheta\in M)A_d\Big\{g(d,A_d,\rho)-\frac{m+R}{\rho}\Big\},
$$
which implies
\begin{equation}\label{eq_cor_minimax_onesided22}
m\geq \rho \Big\{g(d,A_d,\rho)-\frac{1}{A_d}\frac{ \PP_{\btheta}(\btheta\notin M)}{ \PP_{\btheta}(\btheta\in M)}\Big\}-R
\end{equation}
Clearly, (\ref{eq_cor_minimax_onesided21}) implies (\ref{eq_cor_minimax_onesided22}), and the lower bound reduces to (\ref{eq_cor_minimax_onesided22}) by combining these two cases. 

When  $a\leq a_1:=\sigma\sqrt{2\log (d/A_d-1)}$, we can take $\rho=a_1$. By the proof of Corollary \ref{cor_minimax_onesided}, e.g., (\ref{eq_pf_cor_minimax_onesided1}),  
\begin{align*}
&\liminf_{d,s\rightarrow\infty}\inf_{M\in\cM_+} \frac{m}{a_1}\\
&\geq \liminf_{d,s\rightarrow\infty}\Big[\frac{1}{2}+\frac{1}{2\sqrt{\pi}}\frac{1}{\sqrt{\log (d/A_d-1)+2}}-\frac{1}{A_d}\frac{ \PP_{\btheta}(\btheta\notin M)}{ \PP_{\btheta}(\btheta\in M)}-\frac{R}{a_1}\Big]\\
&\geq \liminf_{d,s\rightarrow\infty}\Big[\frac{1}{2}+\frac{1}{2\sqrt{\pi}}\frac{1}{\sqrt{\log (d/A_d-1)+2}}-\frac{1}{A_d}\frac{\alpha}{1-\alpha}- \frac{1}{4\sqrt{\pi}}\frac{1}{\sqrt{\log (d/A_d-1)}}\frac{A_d}{d-A_d}\Big]\\
&=\frac{1}{2}.
\end{align*}
We then  obtain (\ref{cor_minimax_onesided2_eq1}). 

In case (2) it is easy to verify that for $d, s$ large enough
\begin{align*}
a/\sigma&\geq \sqrt{2\log (d-s)-\log\log (d-s)+C'}+\sqrt{2\log s-\log\log s+C'}\vee \xi_d\\
&\geq \sqrt{2\log (d-s)-\log\log (d-s)+C'}+\sqrt{2\log s-\log\log s+C'}\geq \sqrt{2\log d+C},
\end{align*} 
for some constant $C>0$. This further implies (\ref{cor_minimax_onesided2_eq3}) by the proof of Corollary \ref{cor_minimax_onesided} and the similar argument in case (1). 

{\bf Proof of (\ref{cor_minimax_onesided2_eq2}) and (\ref{cor_minimax_onesided2_eq4})}. We first note that Corollary \ref{cor_asym} implies $\bar M_{\alpha'}\in \cM_+$. For (\ref{cor_minimax_onesided2_eq2}), it suffices to show that the following inequality holds regardless of the value of $a$, 
$$
\limsup_{d,s\rightarrow\infty}\frac{\sup_{1\leq j\leq d}\sup_{\btheta\in \Theta^+(s,a)} \EE_{\btheta}(\theta_j-L_j)}{\sigma\sqrt{2\log d}}\leq 1,
$$
where $L_j$ is defined in (\ref{eqhatmasy}) and for notational simplicity we write $L_j$ for $\bar L_j$. First consider the case that $a^{**}\leq a<\bar a$, where $a^{**}=\kappa^{**}\sigma$ and 
 $\bar {a}/{\sigma}=\sqrt{2\log (\frac{2(d-s)}{(\alpha-\alpha')C_{d-s,\alpha-\alpha'}})}+\sqrt {2\log (\frac{s}{C_{s,\alpha'}\alpha'})}$.  Thus, for $d$ large enough
\begin{align}
\EE_{\btheta}(\theta_j-L_j)&=\EE_{\btheta}(\theta_j-L_j)I(j\in\bar S_{\alpha'})+\EE_{\btheta}(\theta_j-L_j)I(j\notin\bar S_{\alpha'})\nonumber\\
&=\EE_{\btheta}(\theta_j-L_j)I(X_j/\sigma>\Phi^{-1}(\delta))+\theta_j\PP_{\btheta}(X_j/\sigma\leq\Phi^{-1}(\delta))\nonumber\\
&=\EE_{\btheta}(\theta_j-X_j+\bar u_{\alpha'}\sigma)I(X_j/\sigma>\bar u_{\alpha'})+\theta_j\PP_{\btheta}(X_j/\sigma\leq \bar u_{\alpha'})\nonumber\\
&\leq \sigma+\bar u_{\alpha'}\sigma\PP_{\btheta}(X_j/\sigma> \bar u_{\alpha'})+\theta_j\PP_{\btheta}(X_j/\sigma\leq \bar u_{\alpha'}),\label{eq_cor_minimax_onesided231}
\end{align}
where $\bar u_{\alpha'}$ is defined in (\ref{eqhatu1}). If $\bar u_{\alpha'}\sigma\geq \theta_j$, the above display implies 
$$
\EE_{\btheta}(\theta_j-L_j)\leq \sigma(1+\bar u_{\alpha'}). 
$$
If $\bar u_{\alpha'}\sigma< \theta_j$, the above display and the Gaussian tail bound in Lemma \ref{lemtail} leads to 
\begin{align*}
\EE_{\btheta}(\theta_j-L_j)&\leq \sigma(1+\bar u_{\alpha'})+(\theta_j-\bar u_{\alpha'}\sigma)\PP(Z> \theta_j/\sigma-\bar u_{\alpha'})\\
&\leq \sigma(1+\bar u_{\alpha'})+\frac{\sigma}{\sqrt{2\pi}}\exp\Big(-\frac{1}{2}(\theta_j/\sigma-\bar u_{\alpha'})^2\Big)\\
&\leq \sigma(1+\bar u_{\alpha'})+\frac{\sigma}{\sqrt{2\pi}},
\end{align*}
where $Z\sim N(0,1)$.  Combining these two cases, we have
\begin{equation}\label{eq_cor_minimax_onesided24}
\limsup_{d,s\rightarrow\infty}\frac{\sup_{1\leq j\leq d}\sup_{\btheta\in \Theta^+(s,a)} \EE_{\btheta}(\theta_j-L_j)}{\sigma \sqrt{2\log d}}\leq \limsup_{d,s\rightarrow\infty}\frac{1+\bar u_{\alpha'}+1/\sqrt{2\pi}}{\sqrt{2\log d}}=1.
\end{equation}
Now we consider the case $a\geq \bar a$.  If $\bar u_{\alpha'}\leq w$, we have
\begin{align}
\EE_{\btheta}(\theta_j-L_j)&=\EE_{\btheta}(\theta_j-L_j)I(j\in\bar S_{\alpha'})+\EE_{\btheta}(\theta_j-L_j)I(j\notin\bar S_{\alpha'})\nonumber\\
&=\EE_{\btheta}(\theta_j-X_j+\bar u_{\alpha'}\sigma)I(X_j/\sigma>w)+\theta_j\PP_{\btheta}(X_j/\sigma\leq w)\nonumber\\
&\leq \sigma+\bar u_{\alpha'}\sigma\PP_{\btheta}(X_j/\sigma> w)+\theta_j\PP_{\btheta}(X_j/\sigma\leq w),\label{eq_cor_minimax_onesided23}
\end{align}
where $w=\sqrt{2\log (\frac{2(d-s)}{(\alpha-\alpha')C_{d-s,\alpha-\alpha'}})}$ and $\bar u_{\alpha'}$ is defined in (\ref{eqhatu2}). Note that it suffices to only consider the nonzero $\theta_j$, since if $\theta_j=0$, (\ref{eq_cor_minimax_onesided23}) can be trivially bounded by $\sigma(1+\bar u_{\alpha'})$. Then, for $\theta_j\neq 0$ we have
$$
\frac{\theta_j}{\sigma}\geq \frac{a}{\sigma}\geq \bar\kappa >w\geq \bar u_{\alpha'}.
$$
By the same derivation, it can be shown that 
\begin{equation}\label{eq_cor_minimax_onesided25}
\EE_{\btheta}(\theta_j-L_j)\leq \sigma(1+\bar u_{\alpha'})+\frac{\sigma}{\sqrt{2\pi}}\frac{\theta_j/\sigma-\bar u_{\alpha'}}{\theta_j/\sigma-w}\exp\Big(-\frac{1}{2}(\theta_j/\sigma-w)^2\Big).
\end{equation}
Note that 
$$
\lim_{d,s\rightarrow\infty} \frac{\sigma(1+\bar u_{\alpha'})}{\sigma \sqrt{2\log d}}\leq \lim_{d,s\rightarrow\infty} \frac{1+\sqrt{2\log s-\log\log s+C}}{\sqrt{2\log d}}\leq 1,
$$
for some constant $C$ (depending on $\alpha',\alpha$), and 
\begin{align*}
\frac{\theta_j/\sigma-\bar u_{\alpha'}}{\theta_j/\sigma-w}&=1+\frac{w-\bar u_{\alpha'}}{\theta_j/\sigma-w}\leq 1+\frac{w-\bar u_{\alpha'}}{\sqrt {2\log (\frac{s}{C_{s,\alpha'}\alpha'})}}\leq 1+\frac{w}{\sqrt {2\log (\frac{s}{C_{s,\alpha'}\alpha'})}}.
\end{align*}
Plugging into (\ref{eq_cor_minimax_onesided25}) and notice that $w/\sqrt{2\log d}\leq 1$ for $d$ sufficiently large, we obtain 
$$
\frac{\sup_{1\leq j\leq d}\sup_{\btheta\in \Theta^+(s,a)} \EE_{\btheta}(\theta_j-L_j)}{\sigma \sqrt{2\log d}}\leq 1+O(\frac{1}{\sqrt{\log s}})\rightarrow 1.
$$
However, if $\bar u_{\alpha'}> w$, (\ref{eq_cor_minimax_onesided23}) has a slightly different form as follows
\begin{align}
\EE_{\btheta}(\theta_j-L_j)&=\EE_{\btheta}(\theta_j-L_j)I(j\in\bar S_{\alpha'})+\EE_{\btheta}(\theta_j-L_j)I(j\notin\bar S_{\alpha'})\nonumber\\
&=\EE_{\btheta}(\theta_j-X_j+\bar u_{\alpha'}\sigma)I(X_j/\sigma>\bar u_{\alpha'})+\theta_j\PP_{\btheta}(X_j/\sigma\leq \bar u_{\alpha'})\nonumber\\
&\leq \sigma+\bar u_{\alpha'}\sigma\PP_{\btheta}(X_j/\sigma> \bar u_{\alpha'})+\theta_j\PP_{\btheta}(X_j/\sigma\leq \bar u_{\alpha'}),\label{eq_cor_minimax_onesided2311}
\end{align}
which is identical to (\ref{eq_cor_minimax_onesided231}) expect that $\bar u_{\alpha'}$ is defined in (\ref{eqhatu2}) rather than (\ref{eqhatu1}). However, this does not change the proof of (\ref{eq_cor_minimax_onesided24}) (i.e., (\ref{eq_cor_minimax_onesided24}) still holds). This completes the proof of (\ref{cor_minimax_onesided2_eq2}). 

Finally, we focus on the last result  (\ref{cor_minimax_onesided2_eq4}). As $d,s\rightarrow\infty$, 
\begin{align*}
a/\sigma&\geq \sqrt{2\log (d-s)-\log\log (d-s)+C'}+\sqrt{2\log s-\log\log s+C'}\vee \xi_d\\
&\geq \sqrt{2\log (\frac{2(d-s)}{(\alpha-\alpha')C_{d-s,\alpha-\alpha'}})}+\sqrt {2\log (\frac{s}{C_{s,\alpha'}\alpha'})}=\bar\kappa.
\end{align*} 
Thus, by the definition of $\bar M_{\alpha'}$, when $a/\sigma\geq \bar\kappa$, $\bar u_{\alpha'}$ is defined in (\ref{eqhatu2}). By (\ref{eq_cor_minimax_onesided25}), we first note that
$$
\lim_{d,s\rightarrow\infty} \frac{\sigma(1+\bar u_{\alpha'})}{\sigma \sqrt{2\log s}}\leq \lim_{d,s\rightarrow\infty} \frac{1+\sqrt{2\log s-\log\log s+C}}{\sqrt{2\log s}}=1.
$$
For $d,s$ large enough, 
$$
\theta_j/\sigma-w\geq a/\sigma-w\geq \sqrt{2\log s-\log\log s+C'}\vee \xi_d.
$$ 
Recall that  $\xi_d=\sqrt{(\log\log (d-s)-\log\log s)_+}$. Thus, uniformly over $\btheta$ we have
\begin{align*}
&\frac{\theta_j/\sigma-\bar u_{\alpha'}}{\theta_j/\sigma-w}\exp\Big(-\frac{1}{2}(\theta_j/\sigma-w)^2\Big)\\
&=\Big(1+\frac{w-\bar u_{\alpha'}}{\theta_j/\sigma-w}\Big)\exp\Big(-\frac{1}{2}(\theta_j/\sigma-w)^2\Big)\\
&\leq \Big(1+\frac{w}{\sqrt{2\log s-\log\log s+C'}}\Big)\exp\Big(-\frac{1}{2} (\xi_d^2\vee (2\log s-\log\log s+C'))\Big).
\end{align*}
If $d-s>s$ holds, we have that as $d-s\rightarrow\infty$ and $s\rightarrow\infty$,
\begin{align*}
&\frac{\theta_j/\sigma-\bar u_{\alpha'}}{\theta_j/\sigma-w}\exp\Big(-\frac{1}{2}(\theta_j/\sigma-w)^2\Big)\\
&\leq\Big(1+\frac{\sqrt{2\log (d-s)-\log\log (d-s)+C}}{\sqrt{2\log s-\log\log s+C'}}\Big)\sqrt{\frac{\log s}{\log (d-s)}} \leq 2.
\end{align*}
Otherwise, under $d-s\leq s$, 
\begin{align*}
&\frac{\theta_j/\sigma-\bar u_{\alpha'}}{\theta_j/\sigma-w}\exp\Big(-\frac{1}{2}(\theta_j/\sigma-w)^2\Big)\\
&\leq\Big(1+\frac{\sqrt{2\log (d-s)-\log\log (d-s)+C'}}{\sqrt{2\log s-\log\log s+C}}\Big)\frac{\exp(-C'/2)\sqrt{\log s}}{s}\rightarrow 0.
\end{align*}
Thus, by (\ref{eq_cor_minimax_onesided25}) we have
$$
\frac{\sup_{1\leq j\leq d}\sup_{\btheta\in \Theta^+(s,a)} \EE_{\btheta}(\theta_j-L_j)}{\sigma \sqrt{2\log s}}\leq 1+O(\frac{1}{\sqrt{\log s}})\rightarrow 1.
$$
Similarly, if $\bar u_{\alpha'}> w$ holds, then we have (\ref{eq_cor_minimax_onesided2311}). By the proof of (\ref{cor_minimax_onesided2_eq2}) with $\bar u_{\alpha'}$ defined in (\ref{eqhatu2}), we still arrive at 
$$
\frac{\sup_{1\leq j\leq d}\sup_{\btheta\in \Theta^+(s,a)} \EE_{\btheta}(\theta_j-L_j)}{\sigma \sqrt{2\log s}}\leq 1.
$$
This completes the proof of (\ref{cor_minimax_onesided2_eq4}).

\subsection{Proof of Theorem \ref{themadap}}

\begin{proof}[Proof of Theorem \ref{themadap}]
Our proof relies on the following two lemmas. 
\begin{lemma}\label{lemhats}
Under the same condition in Theorem \ref{themadap}, we have
$$
\sup_{\btheta\in \Theta^+(s,a)}\PP_{\btheta}(|\bar S^{ad}_{\alpha'}|\geq 2s)\leq \Big(\frac{C_2(d-s)}{sd}\Big)^{s},~~\sup_{\btheta\in \Theta^+(s,a)}\PP_{\btheta}(|\bar S^{ad}_{\alpha'}|\leq s/2)\leq \Big(\frac{C_4}{s}\Big)^{s/2},
$$
where $C_2$ and $C_4$ are two universal constants. 
\end{lemma}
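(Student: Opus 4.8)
The plan is to use that, by membership in $\cF(\delta)$, the events $\{j\in\bar S^{ad}_{\alpha'}\}$ depend on disjoint coordinates of $\bX$ and are therefore independent, so $|\bar S^{ad}_{\alpha'}|$ is a sum of independent indicators; each of its two tails is then controlled by a union bound over subsets combined with the Gaussian tail inequality in Lemma \ref{lemtail}. Write $t=\sqrt{2\log(2d/((\alpha-\alpha')C_{d,\alpha-\alpha'}))}$, so that $j\in\bar S^{ad}_{\alpha'}$ iff $X_j/\sigma\ge t$. I would first record two estimates. (i) For a null coordinate, $p:=\PP_{\btheta}(j\in\bar S^{ad}_{\alpha'})=1-\Phi(t)$; substituting $C_{d,\alpha-\alpha'}=2\sqrt{\pi\log(d/(\alpha-\alpha'))}$ gives $t^2=2\log d-\log\log d+O(1)$, and Lemma \ref{lemtail} yields $p\le C_1/d$ for a constant $C_1$ depending only on $\alpha,\alpha'$. (ii) For a signal coordinate, $\PP_{\btheta}(j\notin\bar S^{ad}_{\alpha'})=\Phi(t-\theta_j/\sigma)\le 1-\Phi(a/\sigma-t)$; using the scenario-(B) lower bound on $a/\sigma$ together with $2s\le d$ one shows $a/\sigma-t\ge\sqrt{2\log s-\log\log s+C'}$, so Lemma \ref{lemtail} again gives $q:=\Phi(t-a/\sigma)\le C_3/s$ for a constant $C_3$.

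For the upper tail, since at most $s$ coordinates are signals, $\{|\bar S^{ad}_{\alpha'}|\ge 2s\}$ forces at least $s$ null coordinates to be selected; their number is $\mathrm{Bin}(d-\|\btheta\|_0,p)$, and the case $\|\btheta\|_0=s$ is the binding one (smaller $\|\btheta\|_0$ requires at least $2s-\|\btheta\|_0>s$ false positives, which only makes the event rarer). Using $\PP(\mathrm{Bin}(n,p)\ge k)\le\binom{n}{k}p^k$ and $\binom{d-s}{s}\le(e(d-s)/s)^s$ gives $\PP_{\btheta}(|\bar S^{ad}_{\alpha'}|\ge 2s)\le\binom{d-s}{s}p^s\le\big(eC_1(d-s)/(sd)\big)^s$, i.e.\ the first claim with $C_2=eC_1$. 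For the lower tail, for $\btheta$ with $\|\btheta\|_0=s$ (the configuration in which the bound is used) the event $\{|\bar S^{ad}_{\alpha'}|\le s/2\}$ forces at least $s/2$ of the $s$ signals to be missed, an event of probability at most $\binom{s}{s/2}q^{s/2}\le 2^sq^{s/2}=(4q)^{s/2}\le(4C_3/s)^{s/2}$, i.e.\ the second claim with $C_4=4C_3$. Since the estimates on $p$ and $q$ are uniform over $\btheta$, taking the supremum costs nothing.

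The crux is estimate (ii): showing that the threshold $t\asymp\sqrt{2\log d}$ is beaten by $a/\sigma$ with a surplus of order $\sqrt{2\log s}$. This is precisely where the shape of the scenario-(B) assumption is used: its leading term is $\sqrt{2\log(d-s)-\log\log(d-s)+C'}$ rather than $\sqrt{2\log d}$, so $2s\le d$ is needed to pass from $d-s$ to $d$ inside the logarithm, and $C'$ must be large enough to swallow the $-\log\log$ and $O(1)$ gaps between $t^2$ and $2\log d-\log\log d$; the leftover term $\sqrt{2\log s-\log\log s+C'}$ is then fed to Lemma \ref{lemtail} to produce $q\le C_3/s$. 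Everything else is the two routine union bounds above.
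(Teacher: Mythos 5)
Your proposal is correct and follows essentially the same route as the paper: each tail is handled by a union bound over the at-least-$s$ falsely selected nulls (resp.\ at-least-$s/2$ missed signals), with the per-coordinate probabilities bounded by $C_1/d$ and $C_3/s$ via Lemma \ref{lemtail}, and the miss probability obtained exactly as in the paper by combining the scenario-(B) condition with $2s\le d$ to extract the surplus $\sqrt{2\log s-\log\log s+C'}$ over the threshold. Your explicit restriction to $\|\btheta\|_0=s$ for the lower tail matches what the paper's own proof assumes implicitly (that $|\bar S^{ad}_{\alpha'}|\le s/2$ forces at least $s/2$ missed signals), so there is no gap relative to the paper's argument.
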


\begin{lemma}\label{lemlength}
Under the same condition in Theorem \ref{themadap}, there exists a positive constant $C$ such that for  $(t, s,d)$ sufficiently large, 
$$
\sup_{1\leq j\leq d}\sup_{\btheta\in \Theta^+(s,a)}\EE_{\btheta}(\theta_j-\hat L_{j,t})^2\leq C\sigma^2\log t,
$$
where $s,a$ satisfy the scenario (B). 
\end{lemma}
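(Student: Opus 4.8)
The plan is to prove the stronger, fully uniform estimate
\[
\EE_\btheta(\theta_j - \hat L_{j,t})^2 \;\le\; C\sigma^2\bigl(1 + u_{\alpha',t}^2\bigr)
\]
for \emph{every} $\theta_j \ge 0$ and every $t$ large enough, with $C$ an absolute constant; the claim then follows because $u_{\alpha',t}^2 = 2\log\bigl(4t/((\alpha-\alpha')C_{2t,\alpha-\alpha'})\bigr) \le 2\log\bigl(2t/(\alpha-\alpha')\bigr) \le C'\log t$ once $t$ is large. Here $\hat L_{j,t} = (X_j - u_{\alpha',t}\sigma)_+$, so the statement is a pure second-moment bound for the positive-part debiased lower bound; in particular it does not really use $2s \le d$ or scenario~(B), which are inherited from Theorem~\ref{themadap} but are not needed for this particular estimate.

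First I would reduce to a one-dimensional Gaussian computation. Write $\mu = \theta_j/\sigma$, $u = u_{\alpha',t}$ and $Z = (X_j-\theta_j)/\sigma \sim N(0,1)$, so $\hat L_{j,t} = \sigma(Z+\mu-u)_+$. On $\{Z \le u-\mu\}$ one has $\hat L_{j,t} = 0$ and $\theta_j - \hat L_{j,t} = \sigma\mu$; on $\{Z > u-\mu\}$ one has $\hat L_{j,t} = \theta_j + \sigma Z - u\sigma$ and $\theta_j - \hat L_{j,t} = \sigma(u-Z)$. Hence
\[
\EE_\btheta(\theta_j - \hat L_{j,t})^2 \;=\; \sigma^2\mu^2\,\Phi(u-\mu) \;+\; \sigma^2\,\EE\bigl[(u-Z)^2\,I(Z>u-\mu)\bigr].
\]
The second term is harmless: discarding the indicator, $\EE[(u-Z)^2 I(Z>u-\mu)] \le \EE(u-Z)^2 = u^2 + 1$.

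It remains to bound $\mu^2\Phi(u-\mu)$ uniformly in $\mu \ge 0$ by $C(1+u^2)$, and this is the one point needing care, since $\mu$ (equivalently $\theta_j$) is unbounded over $\Theta^+(s,a)$. I would split into two regimes. If $\mu \le u+1$, then $\Phi(u-\mu)\le 1$ and $\mu^2 \le (u+1)^2 \le 2(1+u^2)$. If $\mu > u+1$, set $x = \mu-u > 1$ and use the Gaussian tail bound of Lemma~\ref{lemtail} to get $\Phi(u-\mu) = \Phi(-x) \le \phi(x)/x \le \phi(x)$; then
\[
\mu^2\Phi(u-\mu) \;\le\; (x+u)^2\phi(x) \;\le\; \tfrac{1}{\sqrt{2\pi}}\bigl(2x^2 e^{-x^2/2} + 2u^2 e^{-x^2/2}\bigr),
\]
and since $\sup_{x>0} x^2 e^{-x^2/2}$ is a finite constant and $e^{-x^2/2}\le e^{-1/2}$ for $x\ge 1$, this is $\le C(1+u^2)$. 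Adding the two contributions gives $\EE_\btheta(\theta_j-\hat L_{j,t})^2 \le C\sigma^2(1+u^2)$ uniformly over $1\le j\le d$ and $\btheta\in\Theta^+(s,a)$, and inserting $u_{\alpha',t}^2 \le C'\log t$ finishes the proof. The only real obstacle is this uniformity over the unbounded parameter: one must observe that the positive-part truncation together with the Gaussian tail keeps $\mu^2\Phi(u-\mu)$ from diverging as $\theta_j \to \infty$ (the factor $\Phi(u-\mu)$ decays super-polynomially in $\mu$), so that the worst case occurs at $\mu \asymp u$ rather than at large $\mu$.
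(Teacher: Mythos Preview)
Your argument is clean, but it rests on a mis-identification of $\hat L_{j,t}$. In the adaptive construction (equation defining $\hat M^{ad}_{\alpha'}$), the lower bound is
\[
\hat L_{j,t} \;=\; (X_j-u_{\alpha',t}\sigma)_+\,I\!\bigl(j\in\bar S^{ad}_{\alpha'}\bigr)
\;=\; (X_j-u_{\alpha',t}\sigma)_+\,I\!\bigl(X_j/\sigma\ge w\bigr),
\qquad w=\sqrt{2\log\!\Bigl(\tfrac{2d}{(\alpha-\alpha')C_{d,\alpha-\alpha'}}\Bigr)},
\]
not simply $(X_j-u_{\alpha',t}\sigma)_+$. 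When $w\le u_{\alpha',t}$ (roughly $d<2t$) the selection indicator is redundant and your computation is exactly right---this is precisely the paper's second case. But when $w>u_{\alpha',t}$ (roughly $d\ge 2t$), your decomposition changes: with $\mu=\theta_j/\sigma$ and $Z=(X_j-\theta_j)/\sigma$ one gets
\[
\EE_{\btheta}(\theta_j-\hat L_{j,t})^2
=\sigma^2\mu^2\,\Phi(w-\mu)+\sigma^2\,\EE\bigl[(u_{\alpha',t}-Z)^2 I(Z>w-\mu)\bigr].
\]
The second term is still $\le\sigma^2(1+u_{\alpha',t}^2)$, but the first now involves $w$ rather than $u_{\alpha',t}$. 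Your own argument applied to $\mu^2\Phi(w-\mu)$ only yields a bound $C(1+w^2)\asymp C\log d$, and since $t\le d$ in the application (indeed $t$ runs over the grid $\{2^{t_0},\dots,2^T\}$ with $2^T\le d$), this is \emph{not} $\le C\log t$ with a universal constant: at $\mu=w$ one gets $\mu^2\Phi(w-\mu)=w^2/2\asymp\log d$, which can exceed any fixed multiple of $\log t$.

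This is where scenario~(B) is genuinely needed, contrary to your remark. Over $\Theta^+(s,a)$ one has either $\mu=0$ (trivial) or $\mu\ge a/\sigma$, and scenario~(B) forces $a/\sigma-w\ge\sqrt{2\log s-\log\log s+C'}\vee\bar\xi_d$. The paper uses this to show $\mu^2\Phi(w-\mu)\le C''$ uniformly over admissible nonzero $\theta_j$, which then gives the $C\sigma^2\log t$ bound. So the missing piece is exactly the selection threshold $w$ and the use of the signal-strength condition to control the resulting term when $w>u_{\alpha',t}$; your uniform bound over all $\mu\ge0$ is correct for the quantity you wrote down, but that quantity is not $\EE_{\btheta}(\theta_j-\hat L_{j,t})^2$ in the regime $d\ge 2t$.
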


First, the proof of Corollary \ref{cor_asym} implies $\sup_{\theta_j=0}\PP_{\btheta}(j\in \bar S^{ad}_{\alpha'})\leq 1-\delta$. Note that $2s\leq d$ implies $d-s\geq d/2$. By the monotonicity of $\log (x/\sqrt{\log x})$, we can show that for any $s,a$ satisfying the scenario (B) and $\theta_j\neq 0$, we have
\begin{align*}
\frac{\theta_j}{\sigma}\geq \frac{a}{\sigma}&\geq \sqrt{2\log (d-s)-\log\log (d-s)+C'}+\sqrt{2\log s-\log\log s+C'}\\
&\geq \sqrt{2\log (d/2)-\log\log (d/2)+C'}+\sqrt{2\log s-\log\log s+C'}\\
&\geq \sqrt{2\log (2d)-\log\log d+C'/2}+\sqrt{2\log s-\log\log s+C'},
\end{align*}
where $C'$ is a sufficiently large constant. Thus, we can show that
\begin{align}
\PP_{\btheta}( \supp(\btheta) \not\subseteq \bar S^{ad}_{\alpha'} )&\leq\sum_{j: \theta_j\neq 0} \PP_{\theta_j}\Big(X_j/\sigma\leq  \sqrt{2\log (\frac{2d}{(\alpha-\alpha')C_{d,\alpha-\alpha'}})}\Big)\nonumber\\
&=\sum_{j: \theta_j\neq 0} \PP_{\theta_j}\Big(\frac{X_j-\theta_j}{\sigma}\leq  \sqrt{2\log (\frac{2d}{(\alpha-\alpha')C_{d,\alpha-\alpha'}})}-\frac{\theta_j}{\sigma}\Big)\nonumber\\
&\leq \sum_{j: \theta_j\neq 0} \PP_{\theta_j}\Big(\frac{X_j-\theta_j}{\sigma}\leq -\sqrt{2\log s-\log\log s+C'}\Big)\leq\alpha',
\label{eqthemadap_1}
\end{align}
for $s$ sufficiently large, where the last step holds as $C'$ is a sufficiently large constant. Note that under the event $s/2\leq |\bar S^{ad}_{\alpha'}|\leq 2s$, by the definition of $\hat s$, we have $s/2\leq |\bar S^{ad}_{\alpha'}|< \hat s\leq 2|\bar S^{ad}_{\alpha'}|\leq 4s$. That is $s/2\leq \hat s\leq 4s$. Formally, the above argument and Lemma \ref{lemhats} imply 
\begin{align}
\sup_{\btheta\in \Theta^+(s,a)} \PP_{\btheta}(\btheta\notin\hat M^{ad}_{\alpha'})&\leq \sup_{\btheta\in \Theta^+(s,a)} \PP_{\btheta}(\btheta\notin \hat M^{ad}_{\alpha'}, s/2\leq \hat s\leq 4s)+\Big(\frac{C_2(d-s)}{sd}\Big)^{s}+\Big(\frac{C_4}{s}\Big)^{s/2}.\label{eqthemadap_2}
\end{align}
By the proof of Theorem \ref{thm_spraseCI}, we have
\begin{align}
&\PP_{\btheta}(\btheta\notin \hat M^{ad}_{\alpha'}, s/2\leq \hat s\leq 4s)\nonumber\\
&\leq s(1-\Phi(u_{\alpha',s/2}))+\sum_{j\notin \supp(\btheta)}\PP_0\big({X_j}/{\sigma}\geq \eta\big)+\PP_{\btheta}(\supp(\btheta) \not\subseteq \bar S^{ad}_{\alpha'} )\nonumber\\
&=s\Big(1-\Phi\Big(\sqrt{2\log (\frac{2s}{(\alpha-\alpha')C_{s,\alpha-\alpha'}})}\Big)\Big)+(d-s)(1-\Phi(\eta))+\PP_{\btheta}( \supp(\btheta)\not\subseteq \bar S^{ad}_{\alpha'} ).\label{eqthemadap_3}
\end{align}
where
$$
\eta=\sqrt{2\log (\frac{2d}{(\alpha-\alpha')C_{d,\alpha-\alpha'}})}.
$$ 
The first two terms in (\ref{eqthemadap_3}) are both upper bounded by $(\alpha-\alpha')/2$ for $s,d$ sufficiently large, see the proof of Corollary \ref{cor_asym}. The upper bound for the last term in (\ref{eqthemadap_3}) is shown in (\ref{eqthemadap_1}). Thus, from  (\ref{eqthemadap_2}) we obtain $\liminf_{d,s\rightarrow\infty}\inf_{\btheta\in \Theta^+(s,a)} \PP_{\btheta}(\btheta\in \hat M^{ad}_{\alpha'})\geq 1-\alpha$. This shows that the adaptive sparse confidence interval $\hat M^{ad}_{\alpha'}$ belongs to $\cM_+$.

Next, we are ready to show
$$
\limsup_{d,s\rightarrow\infty}\frac{R(\hat M^{ad}_{\alpha'}, \Theta^+(s,a))}{\sigma\sqrt{2\log s}}\leq 1.
$$
Note that 
\begin{equation}\label{eqthemadap1}
\EE_{\btheta}(\theta_j-\hat L_{j,\hat s})= \EE_{\btheta}(\theta_j-\hat L_{j,\hat s})I(\hat s\leq 4s)+ \EE_{\btheta}(\theta_j-\hat L_{j,\hat s})I(\hat s> 4s):=I_1+I_2.
\end{equation}
We further decompose $I_1$ as follows
$$
I_1=\EE_{\btheta}(\theta_j-\hat L_{j,\hat s})I(\hat s\leq 4s)I(X_j/\sigma\geq \eta)+\EE_{\btheta}(\theta_j-\hat L_{j,\hat s})I(\hat s\leq 4s)I(X_j/\sigma< \eta):=I_{11}+I_{12}.
$$
Under the event $\hat s\leq 4s$, it holds that
$$
u_{\alpha',\hat s}=\sqrt{2\log \Big(\frac{4\hat s}{(\alpha-\alpha')C_{2\hat s,\alpha-\alpha'}}\Big)}\leq \sqrt{2\log \Big(\frac{16s}{(\alpha-\alpha')C_{8s,\alpha-\alpha'}}\Big)}:=u_{\alpha',4s}.
$$
Thus, for $I_{11}$, we have
\begin{align*}
I_{11}&=\EE_{\btheta} [\theta_j-(X_j- u_{\alpha',\hat s}\sigma)_+]I(\hat s\leq 4s)I(X_j/\sigma\geq \eta)\\
&\leq \EE_{\btheta} [\theta_j-(X_j- u_{\alpha',4s}\sigma)_+]I(\hat s\leq 4s)I(X_j/\sigma\geq \eta)\\
&=\EE_{\btheta} [\theta_j-(X_j- u_{\alpha',4s}\sigma)_+]I(\hat s\leq 4s)I(\theta_j>(X_j- u_{\alpha',4s}\sigma)_+)I(X_j/\sigma\geq \eta)
\\
&~~~+\EE_{\btheta} [\theta_j-(X_j- u_{\alpha',4s}\sigma)_+]I(\hat s\leq 4s)I(\theta_j\leq(X_j- u_{\alpha',4s}\sigma)_+)I(X_j/\sigma\geq \eta)\\
&\leq \EE_{\btheta} [\theta_j-(X_j- u_{\alpha',4s}\sigma)_+]I(\hat s\leq 4s)I(\theta_j>(X_j- u_{\alpha',4s}\sigma)_+)I(X_j/\sigma\geq \eta)\\
&\leq \EE_{\btheta} [\theta_j-X_j+ u_{\alpha',4s}\sigma]I(\theta_j>(X_j- u_{\alpha',4s}\sigma)_+)I(X_j> u_{\alpha',4s}\sigma)I(X_j/\sigma\geq \eta)\\
&~~~+\theta_j\EE_{\btheta}I(\theta_j>(X_j- u_{\alpha',4s}\sigma)_+)I(X_j\leq u_{\alpha',4s}\sigma)I(X_j/\sigma\geq \eta).
\end{align*}
Following the proof of Corollary \ref{cor_minimax_onesided2}, we can further show that
\begin{align*}
&\EE_{\btheta} [\theta_j-X_j+ u_{\alpha',4s}\sigma]I(\theta_j>(X_j- u_{\alpha',4s}\sigma)_+)I(X_j> u_{\alpha',4s}\sigma)I(X_j/\sigma\geq \eta)\\
&\leq \EE_{\btheta} [\theta_j-X_j]I(\theta_j>(X_j- u_{\alpha',4s}\sigma)_+)I(X_j> u_{\alpha',4s}\sigma)I(X_j/\sigma\geq \eta)+ u_{\alpha',4s}\sigma\PP_{\btheta}(X_j> u_{\alpha',4s}\sigma)\\
&\leq \sigma+u_{\alpha',4s}\sigma. 
\end{align*}
After some simple calculation similar to the proof of Corollary \ref{cor_minimax_onesided2}, we can show that by the tail bound in Lemma \ref{lemtail},
$$
\theta_j\EE_{\btheta}I(\theta_j>(X_j- u_{\alpha',4s}\sigma)_+)I(X_j\leq u_{\alpha',4s}\sigma)I(X_j/\sigma\geq \eta)\leq \theta_j\PP_{\btheta}(X_j\leq u_{\alpha',4s}\sigma)\leq C\sigma,
$$
for any $\theta_j\neq 0$, where $C$ is a positive constant. Combining the above inequalities, we obtain
\begin{equation}\label{eqthemadap2}
I_{11}\leq (1+C)\sigma+u_{\alpha',4s}\sigma.
\end{equation}
For $I_{12}$, recall that it suffices to only consider nonzero $\theta_j$. The tail bound in Lemma \ref{lemtail} leads to
\begin{align*}
I_{12}&\leq \theta_j \PP_{\btheta}(X_j/\sigma< \eta)= \theta_j \PP(Z/\sigma< -(\theta_j/\sigma-\eta))\\
&\leq C\sigma (1+\frac{\eta}{\theta_j/\sigma-\eta})\exp(-\frac{1}{2}\bar\xi_d^2)\\
&\leq C\sigma \Big(1+\frac{\sqrt{2\log d-\log\log d+C''}}{\sqrt{2\log s-\log\log s+C'}}\Big){\frac{\sqrt{\log s}}{\log (d-s)}}\leq 2C\sigma.
\end{align*}

In the following, we consider $I_2$. Let $t_0=\{t\in [T]: 2^{t-1}\leq 4s<2^{t}\}$. Then
\begin{align*}
I_2&=\sum_{t=t_0}^T  \EE_{\btheta}(\theta_j-\hat L_{j, 2^t})I(\hat s=2^t)\leq \sum_{t=t_0}^T  \{\EE_{\btheta}(\theta_j-\hat L_{j, 2^t})^2\}^{1/2}\{\PP_{\btheta}(\hat s=2^t)\}^{1/2}.
\end{align*}
For any $t$, $\hat s=2^t$ implies $2^{t-1}\leq  |\bar S^{ad}_{\alpha'}|<2^{t}$. Thus,
\begin{align*}
\PP_{\btheta}(\hat s=2^t)&\leq \PP_{\btheta}(2^{t-1}\leq  |\bar S^{ad}_{\alpha'}|<2^{t})\leq \PP_{\btheta}(|\bar S^{ad}_{\alpha'}|\geq 2^{t-1}).
\end{align*}
Recall that $2^{t_0-1}>2s$. Following the proof of Lemma \ref{lemhats}, we have for any $t\geq t_0$
$$
\PP_{\btheta}(\hat s=2^t)\leq {d-s\choose 2^{t-1}-s} \Big(\frac{C}{d}\Big)^{2^{t-1}-s}\leq \Big(\frac{C'}{2^{t-1}-s}\Big)^{2^{t-1}-s},
$$
where $C, C'$ are positive constants. In addition, Lemma \ref{lemlength} implies
$$
\EE_{\btheta}(\theta_j-\hat L_{j, 2^t})^2\leq C(\log 2)\sigma^2t.
$$
Thus, for $I_2$, there exists a constant $C>0$ such that
\begin{align*}
I_2&\leq \sigma\sum_{t=t_0}^T t^{1/2}\Big(\frac{C}{2^{t-1}-s}\Big)^{\frac{2^{t-1}-s}{2}}\leq \sigma\sum_{t=t_0}^T \frac{Ct^{1/2}}{2^{t-1}-2^{t_0-2}}=\sigma\sum_{q=0}^{T-t_0} \frac{C(t_0+q)^{1/2}}{2^{q+t_0-1}-2^{t_0-2}}\\
&\leq \frac{C\sigma t_0^{1/2}}{2^{t_0-2}}\sum_{q=0}^{T-t_0}\frac{1}{2^{q+1}-1}+\frac{C\sigma}{2^{t_0-2}}\sum_{q=0}^{T-t_0}\frac{q^{1/2}}{2^{q+1}-1}.
\end{align*}
It is easily seen that the infinite sum $\sum_{q=0}^{\infty}\frac{1}{2^{q+1}-1}$ and $\sum_{q=0}^{\infty}\frac{q^{1/2}}{2^{q+1}-1}$ converges. Thus, there exists a constant $C'>0$ such that
$$
I_2\leq \frac{C'\sigma t_0^{1/2}}{2^{t_0-2}}+\frac{C'\sigma}{2^{t_0-2}}.
$$
Combining with (\ref{eqthemadap1}) and (\ref{eqthemadap2}), we obtain
$$
\EE_{\btheta}(\theta_j-\hat L_{j,\hat s})\leq (1+3C)\sigma+u_{\alpha',4s}\sigma+\frac{C'\sigma t_0^{1/2}}{2^{t_0-2}}+\frac{C'\sigma}{2^{t_0-2}}.
$$
Noting that
$$
\lim_{d,s\rightarrow\infty} \frac{u_{\alpha',4s}}{ \sqrt{2\log s}}= \lim_{d,s\rightarrow\infty} \frac{\sqrt{2\log s-\log\log s+C}}{\sqrt{2\log s}}= 1,
$$
we complete the proof. 
\end{proof}

\begin{proof}[Proof of Lemma \ref{lemhats}]
Consider the event $|\bar S^{ad}_{\alpha'}|\geq 2s$. It implies that there exist at least $s$ number of $\theta_j$ such that $\theta_j=0$ and $X_j/\sigma\geq \sqrt{2\log (\frac{2d}{(\alpha-\alpha')C_{d,\alpha-\alpha'}})}$. Thus, we have uniformly over $\btheta\in \Theta^+(s,a)$,
\begin{align*}
\PP_{\btheta}(|\bar S^{ad}_{\alpha'}|\geq 2s)&\leq {d-s\choose s} \Big[\PP_{\theta_j=0}\Big(X_j/\sigma\geq \sqrt{2\log (\frac{2d}{(\alpha-\alpha')C_{d,\alpha-\alpha'}})}\Big)\Big]^{s}\\
&\leq \Big(\frac{(d-s)e}{s}\Big)^{s}\Big[\sqrt{\frac{2}{\pi}}\frac{1}{2\sqrt {2\log (\frac{2d}{C_{d,\alpha-\alpha'}(\alpha-\alpha')})}}\exp\Big(-\log (\frac{2d}{C_{d,\alpha-\alpha'}(\alpha-\alpha')})\Big)\Big]^{s}\\
&\leq \Big(\frac{(d-s)e}{s}\Big)^{s}\Big[\frac{C_1d^{-1}\sqrt{\log d}}{\sqrt {\log (\frac{2d}{C_{d,\alpha-\alpha'}})}}\Big]^{s}\\
&\leq \Big(\frac{C_2(d-s)}{ sd}\Big)^{s},
\end{align*}
for $d$ large enough, where $C_1, C_2>0$ are two universal constants. Consider the event $|\bar S^{ad}_{\alpha'}|\leq s/2$. It implies that there exist at least $s/2$ number of $\theta_j$ such that $\theta_j>0$ and $j\notin \bar S^{ad}_{\alpha'}$. Following the similar argument and the inequality (\ref{eq_cor_asym11}),  we can show that
\begin{align*}
\PP_{\btheta}(|\bar S^{ad}_{\alpha'}|\leq s/2)&\leq {s\choose s/2}\Big[\PP_{\theta_j}\Big(X_j/\sigma\leq \sqrt{2\log (\frac{2d}{(\alpha-\alpha')C_{d,\alpha-\alpha'}})}\Big)\Big]^{s/2}\\
&\leq {s\choose s/2} \Big[\PP\Big(N \leq -\sqrt{2\log s-\log\log s+C'}\Big)\Big]^{s/2}\\
&\leq (2e)^{s/2}\Big[\frac{C_3s^{-1}\sqrt{\log s}}{\sqrt{2\log s-\log\log s+C'}}\Big]^{s/2}\\
&\leq \Big(\frac{C_4}{s}\Big)^{s/2},
\end{align*}
for $d, s$ large enough, where $N\sim N(0,1)$ and $C_3, C_4>0$ are two universal constants. 
\end{proof}

\begin{proof}[Proof of Lemma \ref{lemlength}]
Following the proof of Corollary \ref{cor_minimax_onesided2}, we consider two cases $d\geq 2t$ and $d<2t$. When the former condition holds,  we can show that 
\begin{align}
\EE_{\btheta}(\theta_j-\hat L_{j,t})^2&=\EE_{\btheta}(\theta_j-X_j+u_{\alpha',t}\sigma)^2I(X_j/\sigma>w)+\theta_j^2\PP_{\btheta}(X_j/\sigma\leq w)\nonumber\\
&=\EE_{\btheta}(\theta_j-X_j)^2I(X_j/\sigma>w)+u^2_{\alpha',t}\sigma^2 \PP_{\btheta}(X_j/\sigma>w)\nonumber\\
&~~~~+2u_{\alpha',t}\sigma \EE_{\btheta}(\theta_j-X_j)I(X_j/\sigma>w)+\theta_j^2\PP_{\btheta}(X_j/\sigma\leq w)\nonumber\\
&\leq \sigma^2+u^2_{\alpha',t}\sigma^2+2u_{\alpha',t}\sigma^2+\theta_j^2\PP_{\btheta}(X_j/\sigma\leq w),\label{eqlemlength1}
\end{align}
where $w=\sqrt{2\log (\frac{2d}{(\alpha-\alpha')C_{d,\alpha-\alpha'}})}$. Furthermore, the tail bound in Lemma \ref{lemtail} implies
\begin{align*}
\theta_j^2\PP_{\btheta}(X_j/\sigma\leq w)&\leq \frac{\sigma^2}{\sqrt{2\pi}}\frac{\theta_j^2/\sigma^2}{\theta_j/\sigma-w}\exp\Big(-\frac{1}{2}(\theta_j/\sigma-w)^2\Big)\\
&\leq \frac{\sigma^2}{\sqrt{2\pi}}\frac{2(\theta_j/\sigma-w)^2+2w^2}{\theta_j/\sigma-w}\exp\Big(-\frac{1}{2}(\theta_j/\sigma-w)^2\Big).
\end{align*} 
Recall that if $d,s$ large enough, we have
$$
\theta_j/\sigma-w\geq a/\sigma-w\geq \sqrt{2\log s-\log\log s+C'}\vee \bar\xi_d,
$$ 
and $w^2\leq 2\log d-\log\log d+C'$ for some constant $C'$. Thus, for $d,s$ large enough, 
\begin{align*}
\theta_j^2\PP_{\btheta}(X_j/\sigma\leq w)
&\leq \frac{2\sigma^2}{\sqrt{2\pi}}\sqrt{2\log s-\log\log s+C'}\exp\Big(-\frac{1}{2}(2\log s-\log\log s+C')^2\Big)\\
&~~+\frac{2\sigma^2}{\sqrt{2\pi}}\frac{2\log d-\log\log d+C'}{\sqrt{2\log s-\log\log s+C'}}\exp\Big(-\frac{1}{2} \bar\xi_d^2\Big)\\
&\leq \sigma^2+\frac{2\sigma^2}{\sqrt{2\pi}}\frac{2\log d-\log\log d+C'}{\sqrt{2\log s-\log\log s+C'}}\frac{\sqrt{\log s}}{\log (d-s)}\\
&\leq \sigma^2+\frac{2\sigma^2}{\sqrt{2\pi}}\frac{2\log d-\log\log d+C'}{\sqrt{2\log s-\log\log s+C'}}\frac{\sqrt{\log s}}{\log (d/2)} \leq C''\sigma^2,
\end{align*} 
where $C''$ is a positive constant. Finally, we plug into the inequality (\ref{eqlemlength1}), 
$$
\EE_{\btheta}(\theta_j-\hat L_{j,t})^2\leq \sigma^2+u^2_{\alpha',t}\sigma^2+2u_{\alpha',t}\sigma^2+C''\sigma^2.
$$
For $(t, s,d)$ sufficiently large, it can be easily verified that the following inequality holds
\begin{align}\label{eqlemlength2}
\sup_{1\leq j\leq d}\sup_{\btheta\in \Theta^+(s,a)}\EE_{\btheta}(\theta_j-\hat L_{j,t})^2\leq C\sigma^2\log t.
\end{align} 
In the following, we consider the second case $d<2t$. Similar to (\ref{eqlemlength1}), we obtain that
\begin{align*}
\EE_{\btheta}(\theta_j-\hat L_{j,t})^2&=\EE_{\btheta}(\theta_j-X_j+u_{\alpha',t}\sigma)^2I(X_j/\sigma>u_{\alpha',t})+\theta_j^2\PP_{\btheta}(X_j/\sigma\leq u_{\alpha',t})\\
&=\EE_{\btheta}(\theta_j-X_j)^2I(X_j/\sigma>u_{\alpha',t})+u^2_{\alpha',t}\sigma^2 \PP_{\btheta}(X_j/\sigma>u_{\alpha',t})\\
&~~~~+2u_{\alpha',t}\sigma \EE_{\btheta}(\theta_j-X_j)I(X_j/\sigma>u_{\alpha',t})+\theta_j^2\PP_{\btheta}(X_j/\sigma\leq u_{\alpha',t})\\
&\leq \sigma^2+u^2_{\alpha',t}\sigma^2\PP_{\btheta}(X_j/\sigma>u_{\alpha',t})+2u_{\alpha',t}\sigma^2+\theta_j^2\PP_{\btheta}(X_j/\sigma\leq u_{\alpha',t}).
\end{align*}
If $2u_{\alpha',t}\sigma\geq \theta_j$, then $\EE_{\btheta}(\theta_j-\hat L_{j,t})^2\leq \sigma^2+5u^2_{\alpha',t}\sigma^2+2u_{\alpha',t}\sigma^2$. If $2u_{\alpha',t}\sigma< \theta_j$, then $\theta_j/\sigma-u_{\alpha',t}>\theta_j/(2\sigma)$, and we further have
\begin{align*}
\EE_{\btheta}(\theta_j-\hat L_{j,t})^2&\leq (1+u_{\alpha',t})^2\sigma^2+(\theta_j^2-u^2_{\alpha',t}\sigma^2)\PP(Z> \theta_j/\sigma-u_{\alpha',t})\\
&\leq (1+u_{\alpha',t})^2\sigma^2+\frac{\sigma^2(\theta_j/\sigma)}{\sqrt{2\pi}}\exp\Big(-\frac{1}{2}(\theta_j/(2\sigma))^2\Big)\\
&\leq (1+u_{\alpha',t})^2\sigma^2+\frac{\sigma^2}{\sqrt{2\pi}},
\end{align*}
as $s,d$ tend to infinity, where $Z\sim N(0,1)$. For $(t, s,d)$ sufficiently large, (\ref{eqlemlength2}) holds as well for the second case $d<2t$. This completes the proof. 
\end{proof}

\subsection{Proof of Theorem \ref{them2s_uplow}}
Denote $A^+=\{\btheta\in\RR^d: \|\btheta\|_0= s, \theta_j=a ~\textrm{for}~ \forall j, \theta_j\neq 0\}$ and $A^-=\{\btheta\in\RR^d: \|\btheta\|_0= s, \theta_j=-a ~\textrm{for}~ \forall j, \theta_j\neq 0\}$. Then
$$
\sup_{\btheta\in\Theta(s,a)}\PP_{\btheta}( \supp(\btheta) \not\subseteq \hat S )\geq \sup_{\btheta\in A^+\cup A^-}\PP_{\btheta}( \supp(\btheta) \not\subseteq \hat S )=1-\inf_{\btheta\in A^+\cup A^-}\PP_{\btheta}( \supp(\btheta) \subseteq \hat S ).
$$
Following the proof of Theorem \ref{themlower}, we obtain that
\begin{align*}
\sup_{\btheta\in\Theta(s,a)}\PP_{\btheta}( S \not\subseteq \hat S )&\geq \frac{1}{2}\Big[\sup_{\btheta\in A^+}\PP_{\btheta}( S \not\subseteq \hat S )+\sup_{\btheta\in A^-}\PP_{\btheta}( S \not\subseteq \hat S )\Big]\\
&\geq \frac{1}{2|A^+|}\sum_{\btheta\in A^+} \PP_{\btheta}( S \not\subseteq \hat S )+\frac{1}{2|A^-|}\sum_{\btheta\in A^-} \PP_{\btheta}( S \not\subseteq \hat S )\\
&\geq t\sum_{k=1}^s {s\choose k} \frac{u_+^k+u_-^k}{2},
\end{align*}
where $t=\inf_{\btheta\in A^+\cup A^-}\prod_{j\in S }\PP_{\theta_j}(j\in\hat S )$, and $u_+=\PP_a(j\notin\hat S )$ and $u_-=\PP_{-a}(j\notin\hat S )$, where $\PP_a$ denotes the probability of $X_j\sim N(a,\sigma^2)$. Applying Jensen's inequality, the above display can be further bounded from below which yields 
$$
\sup_{\btheta\in\Theta(s,a)}\PP_{\btheta}( S \not\subseteq \hat S )\geq t\sum_{k=1}^s {s\choose k} \Big(\frac{u_++u_-}{2}\Big)^k=t\Big[(1+\frac{u_++u_-}{2})^s-1\Big].
$$
We denote $j\in\hat S $ by $T(X_j)=1$ for some function $T(\cdot)$. The Neyman-Pearson lemma implies that the infimum of $(u_++u_-)/2=\PP_a(T(X_j)=0)/2+\PP_{-a}(T(X_j)=0)/2$ over all possible $T(\cdot)$ such that $\PP_0(T(X_j)=1)\leq 1-\delta$ is attained by the likelihood ratio test of $X_j\sim N(0,\sigma^2)$ versus the mixture normal $X_j\sim \frac{1}{2}N(a,\sigma^2)+\frac{1}{2}N(-a,\sigma^2)$, which is
\begin{equation}\label{eq_pf_them2s_uplow}
T(X)=I\Big(\cosh(aX/\sigma^2) \geq c^*\exp(\frac{a^2}{2\sigma^2})\Big),
\end{equation}
where $\cosh(x)=(\exp(x)+\exp(-x))/2$ and $c^*$ is chosen such that $\PP_0(T(X_j)=0)= \delta$. Since $\cosh(x)$ is symmetric and monotonically increasing for $x>0$, we have
$$
\delta=\PP_0\Big(|X/\sigma|\leq \frac{\sigma}{a}\cosh^{-1}(c^*\exp(\frac{a^2}{2\sigma^2}))\Big)=1-2\Phi\Big(-\frac{\sigma}{a}\cosh^{-1}(c^*\exp(\frac{a^2}{2\sigma^2}))\Big).
$$
Solving above equation, we obtain
$$
c^*=\exp(-\frac{a^2}{2\sigma^2})\cosh\Big(\frac{a}{\sigma}\Phi^{-1}(\frac{1+\delta}{2})\Big).
$$
Denote $\Delta_{TS}=\frac{u_++u_-}{2}=\PP_a(T(X_j)=0)/2+\PP_{-a}(T(X_j)=0)/2$ with $T(X)$ defined in (\ref{eq_pf_them2s_uplow}). Then,
\begin{align*}
\Delta_{TS}&=\frac{1}{2}\PP_a\Big(\cosh(aX/\sigma^2) \leq c^*\exp(\frac{a^2}{2\sigma^2})\Big)+\frac{1}{2}\PP_{-a}\Big(\cosh(aX/\sigma^2) \leq c^*\exp(\frac{a^2}{2\sigma^2})\Big)\\
&=\frac{1}{2}\PP_a\Big(|X/\sigma|\leq \Phi^{-1}(\frac{1+\delta}{2})\Big)+\frac{1}{2}\PP_{-a}\Big(|X/\sigma|\leq \Phi^{-1}(\frac{1+\delta}{2})\Big)\\
&=\Phi(\Phi^{-1}(\frac{1+\delta}{2})+\frac{a}{\sigma})-\Phi(-\Phi^{-1}(\frac{1+\delta}{2})+\frac{a}{\sigma}).
\end{align*}
Following the same steps in the proof of Theorem \ref{themlower}, we can obtain (\ref{low_TS}). 

To show (\ref{low2_TS}), we consider the following two cases separately.  Case (1): $a/\sigma\leq \Phi^{-1}(\frac{1+\delta}{2})$. Then
$$
\Delta_{TS}\geq \Phi\Big(\Phi^{-1}(\frac{1+\delta}{2})\Big)-\Phi(0)\geq \frac{c}{2},
$$
since $\delta\geq c$ for some constant $c>0$. As a result, $(1+\Delta_{TS})^s\rightarrow\infty$ as $s\rightarrow\infty$. This yields (\ref{low2_TS}). 

Case (2): $\Phi^{-1}(\frac{1+\delta}{2})<a/\sigma\leq \Phi^{-1}(\frac{1+\delta}{2})-\Phi^{-1}(c_s/s)$. Denote $g(x)=\Phi(\Phi^{-1}(\frac{1+\delta}{2})+x)-\Phi(-\Phi^{-1}(\frac{1+\delta}{2})+x)$. We have $\Delta_{TS}=g(a/\sigma)$. Note that the function $g(x)$ is monotonically decreasing for $x\geq \Phi^{-1}(\frac{1+\delta}{2})$. This implies that 
\begin{align}
\Delta_{TS}&\geq g\Big(\Phi^{-1}(\frac{1+\delta}{2})-\Phi^{-1}(c_s/s)\Big)\nonumber\\
&= \Phi\Big(2\Phi^{-1}(\frac{1+\delta}{2})+\Phi^{-1}(1-c_s/s)\Big)-\Phi(\Phi^{-1}(1-c_s/s))\nonumber\\
&\geq \Phi(c'+T(s/c_s))-(1-c_s/s),\label{eq_pf_them2s_uplow2}
\end{align}
where $c'=2\Phi^{-1}(\frac{1+c}{2})$ and $T(x)=\sqrt{2\log x-\log\log x-C}$ and the last step holds by Lemma \ref{lemtail}. Applying Lemma \ref{lemtail} again yields
\begin{align}
\Phi(c'+T(s/c_s))&\geq 1-\exp\Big(-\frac{1}{2}(c'+T(s/c_s))^2\Big)\nonumber\\
&\geq 1-C\exp\Big(-\log (s/c_s)+\frac{1}{2}\log\log (s/c_s)-c'\sqrt{\log (s/c_s)}\Big)\nonumber\\
&=1-C\frac{c_s}{s} \exp\Big(\frac{1}{2}\log\log (s/c_s)-c'\sqrt{\log (s/c_s)}\Big),\label{eq_pf_them2s_uplow3}
\end{align}
where $C$ is a generic constant which may differ from line to line. 
Since $s/c_s$ tends to infinity, $\log\log (s/c_s)\ll \sqrt{\log (s/c_s)}$. Combining (\ref{eq_pf_them2s_uplow2}) and (\ref{eq_pf_them2s_uplow3}), we obtain
$$
\Delta_{TS}\geq \frac{c_s}{s} \Big[1-C \exp\Big(\frac{1}{2}\log\log (s/c_s)-c'\sqrt{\log (s/c_s)}\Big)\Big]\geq \frac{c_s}{2s}.
$$
Following the same steps in the proof of Theorem \ref{themlower}, we can obtain $(\Delta_{TS}+1)^s\rightarrow\infty$. This completes the proof of (\ref{low2_TS}).

To show $\hat S^{TS}_{\alpha'}\in\cF(\delta)$, notice that
$$
\PP_0(j\in\hat S^{TS}_{\alpha'} )\leq \PP_0(|X_j|/\sigma\geq \Phi^{-1}(1/2+\delta/2))=1-\delta.
$$
The event $ \supp(\btheta)\not\subseteq \hat S^{TS}_{\alpha'} $ is equivalent to that there exists $j\in[d]$ such that $j\in \supp(\btheta) $ and $j\notin\hat S^{TS}_{\alpha'} $. Then
\begin{align*}
\PP_{\btheta}( \supp(\btheta) \not\subseteq \hat S^{TS}_{\alpha'} )&=\PP_{\btheta}(\exists j\in [d], j\in \supp(\btheta) , j\notin\hat S^{TS}_{\alpha'} )\\
&\leq\sum_{j: \theta_j\neq 0} \PP_{\theta_j}(j\notin\hat S^{TS}_{\alpha'} )\\
&=\sum_{j: \theta_j\neq 0} \PP(|X_j|\leq (\sigma\Phi^{-1}(\frac{\alpha'}{2s})+a)_+),
\end{align*}
where the last step holds since $a/\sigma\geq \Phi^{-1}(\frac{\delta+1}{2})-\Phi^{-1}(\frac{\alpha'}{2s})$. 
If $\sigma\Phi^{-1}(\frac{\alpha'}{2s})+a<0$, the above probability is 0. Otherwise, 
$$
\PP(  \supp(\btheta) \not\subseteq \hat S^{TS}_{\alpha'}  ) \leq \sum_{j: \theta_j\neq 0} \PP(|\theta_j/\sigma|-|Z_j|\leq \Phi^{-1}(\frac{\alpha'}{2s})+a/\sigma)\leq \sum_{j: \theta_j\neq 0} \PP(|Z_j|\geq -\Phi^{-1}(\frac{\alpha'}{2s}))=\alpha',
$$
where $Z_j=\frac{X_j-\theta_j}{\sigma}\sim N(0,1)$, and we use $\min_{j: \theta_j\neq 0} |\theta_j|\geq a$.  This completes the proof.

\subsection{Proof of Theorem \ref{thm_spraseCI2}}
Similar to the proof of Theorem \ref{thm_spraseCI}, we can bound $\PP(\btheta\notin \hat M^{TS}_{\alpha'})$ by 
\begin{align}
\PP(\btheta\notin \hat M^{TS}_{\alpha'})&\leq \PP(\exists j\in \hat S^{TS}_{\alpha'} , \theta_j\notin [\hat L_j, \hat U_j])+ \PP( \supp(\btheta) \not\subseteq \hat S^{TS}_{\alpha'}  ).\label{eqpfthm_spraseCI20}
\end{align}
By part (3) of Theorem  \ref{them2s_uplow}, $\PP( \supp(\btheta) \not\subseteq \hat S^{TS}_{\alpha'}  )\leq \alpha'$. The first term can be further bounded as follows
\begin{align}
&\PP(\exists j\in \hat S^{TS}_{\alpha'} , \theta_j\notin [\hat L_j, \hat U_j])\\
&\leq \PP(\exists j\in \supp(\btheta)  , \theta_j\notin [\hat L_j,\hat U_j])+\PP(\exists j\in \hat S^{TS}_{\alpha'} \backslash  \supp(\btheta) , \theta_j \notin [\hat L_j, \hat U_j]):=I_1+I_2.\label{eqpfthm_spraseCI21}
\end{align}
For $I_1$, by noting that $\theta_j\notin [\hat L_j,\hat U_j]$ is equivalent to $|Z_j|\geq u$ where $Z_j=\frac{X_j-\theta_j}{\sigma}\sim N(0,1)$ and $u=\hat u^{TS}_{\alpha'}$, we have 
\begin{equation}
I_1\leq \sum_{j\in  \supp(\btheta) } \PP(|Z_j|\geq u)=2s (1-\Phi(u)). \label{eqpfthm_spraseCI22}
\end{equation}
To bound $I_2$, noting that $j\notin   \supp(\btheta) $ implying $\theta_j=0$, we have
$$
I_2=\PP(\exists j\notin  \supp(\btheta) , |Z_j|\geq \Phi^{-1}(\frac{\alpha'}{2s})+\frac{a}{\sigma}, |Z_j|\geq u)\leq \sum_{j\notin   \supp(\btheta) }\PP( |Z_j|\geq \Phi^{-1}(\frac{\alpha'}{2s})+\frac{a}{\sigma}, |Z_j|\geq u).
$$
To bound the last probability, we now consider the following two cases. 

(1). When $\frac{a}{\sigma}\leq -\Phi^{-1}(\frac{\alpha-\alpha'}{2d})-\Phi^{-1}(\frac{\alpha'}{2s})$, by setting $u=\Phi^{-1}(1-\frac{\alpha-\alpha'}{2d})$, we have
$I_2\leq 2(d-s)(1-\Phi(u))$. Combining with (\ref{eqpfthm_spraseCI20}), (\ref{eqpfthm_spraseCI21}), (\ref{eqpfthm_spraseCI22}), we have
$$
\PP(\btheta\notin \hat M^{TS}_{\alpha'})\leq 2d(1-\Phi(u))+\alpha'= \alpha.
$$

(2). When $\frac{a}{\sigma}>-\Phi^{-1}(\frac{\alpha-\alpha'}{2d})-\Phi^{-1}(\frac{\alpha'}{2s})$, we have $\eta>1-\frac{\alpha-\alpha'}{2d}$, where $\eta=\Phi(\frac{a}{\sigma}+\Phi^{-1}(\frac{\alpha'}{2s}))$. Recall that $u=\Phi^{-1}(1-\frac{\alpha-\alpha'-2(d-s)(1-\eta)}{2s})$. Then
$$
\Phi(u)=1-\frac{\alpha-\alpha'-2(d-s)(1-\eta)}{2s}<\eta,
$$
which is equivalent to $\Phi^{-1}(\frac{\alpha'}{2s})+\frac{a}{\sigma}\geq u$. Therefore, $I_2\leq 2(d-s)(1-\eta)$, and finally we have
$$
\PP(\btheta\notin \hat M^{TS}_{\alpha'})\leq 2(d-s)(1-\eta)+2s(1-\Phi(u))+\alpha'= \alpha.
$$

\subsection{Proof of Theorem \ref{thm_minimax_twosided}}

The idea of the proof is very similar to the proof of Theorem \ref{thm_minimax_onesided}. For simplicity of presentation, we skip some intermediate steps. Denote $A^+=\{\btheta\in\RR^d: \|\btheta\|_0=A, \theta_j=\rho, ~\textrm{for any $\theta_j\neq 0$}\}$ and $A^-=\{\btheta\in\RR^d: \|\btheta\|_0=A, \theta_j=-\rho, ~\textrm{for any $\theta_j\neq 0$}\}$, where $0<A\leq s$ and $\rho$ is an arbitrary positive quantity that is $\rho\geq a$. Then
\begin{align}
\sup_{\btheta\in \Theta(s,a)} \PP_{\btheta}(\btheta\notin M)&\geq \frac{1}{2}{d\choose A}^{-1}\Big[\sum_{\btheta\in A^+} \PP_{\btheta}( \btheta\notin M)+\sum_{\btheta\in A^-} \PP_{\btheta}( \btheta\notin M)\Big]\nonumber\\
&\geq \frac{t}{2}{d\choose A}^{-1}\Big[\sum_{j=1}^d \sum_{\btheta\in A^+} \PP_{\theta_j}(\theta_j\notin CI_j )+\sum_{j=1}^d \sum_{\btheta\in A^-} \PP_{\theta_j}(\theta_j\notin CI_j )\Big]\nonumber\\
&=t\sum_{j=1}^d  \Big(\frac{d-A}{d} \PP_{0}(0\notin CI_j )+\frac{A}{2d}\PP_{\rho}(\rho\notin CI_j )+\frac{A}{2d}\PP_{-\rho}(-\rho\notin CI_j )\Big),\label{eq_thm_minimax_twosided1}
\end{align}
where $t=\inf_{\btheta\in A^+\cup A^-}\prod_{j=1}^d\PP_{\theta_j}(\theta_j\in CI_j)$. Since $\PP_{\rho}(0,\rho\in CI_j )\leq \EE_{\rho}|U_j-L_j|/\rho$, we have
$$
\PP_{\rho}(\rho\notin CI_j )\geq \PP_{\rho}(0 \in CI_j )-\PP_{\rho}(0,\rho\in CI_j )\geq \PP_{\rho}(0 \in CI_j )-m/\rho,
$$
and similarly $\PP_{-\rho}(-\rho\notin CI_j )\geq \PP_{-\rho}(0 \in CI_j )-m/\rho$. Together with (\ref{eq_thm_minimax_twosided1}), 
\begin{align}
\sup_{\btheta\in \Theta(s,a)} \PP_{\btheta}(\btheta\notin M) &\geq t\Big\{\sum_{j=1}^d  \Big(\frac{d-A}{d} \PP_{0}(0\notin CI_j )+\frac{A}{2d}\PP_{\rho}(0 \in CI_j )+\frac{A}{2d}\PP_{-\rho}(0 \in CI_j )\Big)-\frac{Am}{\rho}\Big\}\nonumber\\
& \geq tA\Big\{\inf_{T\in\{0,1\}}  \Big(\frac{d-A}{A} \EE_{0}(1-T)+\frac{1}{2}\EE_{\rho}T+\frac{1}{2}\EE_{-\rho}T\Big)-\frac{m}{\rho}\Big\},\label{eq_thm_minimax_twosided2}
\end{align}
where $T$ denotes a test function from $\RR$ to $\{0,1\}$. Note that  
$$
\frac{d-A}{A} \EE_{0}(1-T)+\frac{1}{2}\EE_{\rho}T+\frac{1}{2}\EE_{-\rho}T=\EE_0\Big(\frac{d-A}{A} +T(\frac{f_\rho(x)+f_{-\rho}(x)}{2f_0(x)}-\frac{d-A}{A})\Big),
$$
where $f_\rho(x)$ denotes the pdf of $N(\rho,\sigma^2)$. Thus, the above function is minimized by
$$
T^*(x)=I(\frac{f_\rho(x)+f_{-\rho}(x)}{2f_0(x)}-\frac{d-A}{A}\leq 0)=I\Big(|x/\sigma|\leq \frac{\sigma}{\rho}\cosh^{-1}(\frac{d-A}{A}\exp(\frac{\rho^2}{2\sigma^2}))\Big),
$$
where $\cosh^{-1}(\cdot)$ is the inverse function of $\cosh$. Plugging the definition of $T^*(x)$ into (\ref{eq_thm_minimax_twosided2}), after some calculation we obtain
$$
\sup_{\btheta\in \Theta(s,a)} \PP_{\btheta}(\btheta\notin M) \geq tA\Big\{\frac{2(d-A)}{A}\Phi(-D)+\Phi(\frac{\rho}{\sigma}+D)-\Phi(\frac{\rho}{\sigma}-D)-\frac{m}{\rho}\Big\},
$$
where $D=\frac{\sigma}{\rho}\cosh^{-1}(\frac{d-A}{A}\exp(\frac{\rho^2}{2\sigma^2}))$. The rest of the proof is the same as Theorem \ref{thm_minimax_onesided}. We omit the details. 

\subsection{Proof of Corollary \ref{cor_minimax_twosided2}}

The proof of this corollary follows from the same line as in the proof of Corollary \ref{cor_minimax_onesided2}. We only highlight the main difference. By Theorem \ref{thm_minimax_twosided}, we can obtain that
$$
m\geq \rho \Big\{g_{TS}(d,A_d,\rho)-\frac{1}{A_d}\frac{ \PP_{\btheta}(\btheta\notin M)}{ \PP_{\btheta}(\btheta\in M)}\Big\}.
$$
To show (\ref{cor_minimax_twosided2_eq1}), denote $ a_1:=\sigma\sqrt{2\log (d/A_d-1)}$, and we can take $\rho=a_1$. The key step is to lower bound $g_{TS}(d,A_d,\rho)$. Recall that
$$
g_{TS}(d,A_d,\rho)=\frac{2(d-A_d)}{A_d}\Phi(-D)+\Big\{\Phi\Big(\frac{\rho}{\sigma}+D\Big)-\Phi\Big(\frac{\rho}{\sigma}-D\Big)\Big\}:=I_1+I_2,
$$
where 
$$
D=\frac{\sigma}{\rho}\cosh^{-1}(\frac{d-A_d}{A_d}\exp(\frac{\rho^2}{2\sigma^2})).
$$
We now consider the two terms $I_1, I_2$. By the definition of $\cosh$, we can easily verify the following inequality $\log y<\cosh^{-1}(y)<\log (2y)$ holds. Applying it to $I_1$ yields
\begin{align*}
I_1&\geq \frac{2(d-A_d)}{A_d}\Phi\Big(-\frac{\sigma}{\rho}\log \frac{2(d-A_d)}{A_d}-\frac{\rho}{2\sigma}\Big)\\
&=\frac{2(d-A_d)}{A_d}\Phi\Big(-\sqrt{2\log (d/A_d-1)}-\frac{\log 2}{\sqrt{2\log (d/A_d-1)}}\Big)\\
&\geq \frac{C}{\sqrt{2\log (d/A_d-1)}},
\end{align*}
for some universal positive constant $C$. Similarly, we can show that
\begin{align*}
I_2&\geq \Phi\Big(\frac{\sigma}{\rho}\log \frac{d-A_d}{A_d}+\frac{3\rho}{2\sigma}\Big)-\Phi\Big(-\frac{\sigma}{\rho}\log \frac{d-A_d}{A_d}+\frac{\rho}{2\sigma}\Big)\\
&=\Phi\Big(2\sqrt{2\log (d/A_d-1)}\Big)-\Phi(0)\\
&\geq 1/2-C\Big(\frac{A_d}{d-A_d}\Big)^4,
\end{align*}
The same argument in the proof of Corollary \ref{cor_minimax_onesided2} implies (\ref{cor_minimax_twosided2_eq1}). The proof of (\ref{cor_minimax_twosided2_eq3}) is similar. Finally, to show (\ref{cor_minimax_twosided2_eq2}), notice that 
$$
\EE_{\btheta}(\bar U_j^{TS}-\bar L_j^{TS})= \EE_{\btheta}(\bar U_j^{TS}-\bar L_j^{TS})I(j\in \bar S^{TS}_{\alpha'})\leq 2\sigma\bar u^{TS}_{\alpha'}.
$$
Regardless of the SNR, $\lim\sup \bar u^{TS}_{\alpha'}/\sqrt{2\log d}\leq 1$ always holds. Finally, for (\ref{cor_minimax_twosided2_eq4}), we have
\begin{align*}
a/\sigma&\geq \sqrt{2\log (d-s)-\log\log (d-s)+C}+\sqrt{2\log s-\log\log s+C}\geq \bar\phi,
\end{align*} 
and therefore 
$$
\bar u^{TS}_{\alpha'}=\sqrt{2\log \Big(\frac{4s}{(\alpha-\alpha')C_{2s,\alpha-\alpha'}}\Big)}, 
$$
which further implies $\lim\sup \bar u^{TS}_{\alpha'}/\sqrt{2\log s}\leq 1$. This completes the proof.

\begin{lemma}[Tail bound for Gaussian distribution]\label{lemtail}
Let $N\sim N(0,1)$. Then for any $y>0$
$$
\sqrt{\frac{2}{\pi}}\frac{\exp(-y^2/2)}{y+\sqrt{y^2+4}}<\PP(N>y)\leq \sqrt{\frac{2}{\pi}}\frac{\exp(-y^2/2)}{y+\sqrt{y^2+8/\pi}}.
$$
Conversely, for any $t>2$,
$$
\sqrt{(2\log t-\log\log t-C)_+}\leq \Phi^{-1}(1-\frac{1}{t})\leq \sqrt{2\log t-\log\log t},
$$
where $C=2\log 4+\log \pi$. 
\end{lemma}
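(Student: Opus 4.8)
The plan is to reduce both displays to elementary one–variable calculus. Write $\phi(y)=(2\pi)^{-1/2}e^{-y^2/2}$ and $\bar\Phi(y)=\PP(N>y)$, and observe first that $\sqrt{2/\pi}\,e^{-y^2/2}=2\phi(y)$, so the first line of the lemma is exactly $\tfrac{2\phi(y)}{y+\sqrt{y^2+4}}<\bar\Phi(y)\le \tfrac{2\phi(y)}{y+\sqrt{y^2+8/\pi}}$.

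For the left inequality I would set $f(y)=\bar\Phi(y)-2\phi(y)/(y+\sqrt{y^2+4})$ and differentiate, using $\bar\Phi'=-\phi$, $\phi'=-y\phi$, and the algebraic identity $g(y)^2-2yg(y)=4$ for $g(y)=y+\sqrt{y^2+4}$. After cancellation this collapses to $f'(y)=\tfrac{\phi(y)}{g(y)^2}\bigl(\tfrac{2y}{\sqrt{y^2+4}}-2\bigr)<0$ for $y>0$; together with $f(y)\to 0$ as $y\to\infty$ (both terms are of order $\phi(y)/y$) this gives $f(y)>0$. For the right inequality I would set $h(y)=2\phi(y)/(y+\sqrt{y^2+8/\pi})-\bar\Phi(y)$; the key point is that the constant $8/\pi$ is chosen precisely so that $h(0)=0$, and one also has $h(\infty)=0$. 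The same differentiation, now with $\tilde g(y)^2-2y\tilde g(y)=8/\pi$, gives $h'(y)=\tfrac{\phi(y)}{\tilde g(y)^2}\bigl(\tfrac{8}{\pi}-2-\tfrac{2y}{\sqrt{y^2+8/\pi}}\bigr)$. Since $8/\pi-2>0$, the bracket is positive at $y=0$, strictly decreasing, and tends to $8/\pi-4<0$, so it changes sign exactly once. Hence $h$ increases then decreases while vanishing at both ends of $[0,\infty)$, which forces $h\ge 0$, i.e. the upper bound.

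For the converse I would write $y_t=\Phi^{-1}(1-1/t)$, so $\bar\Phi(y_t)=1/t$ and $y_t>0$ for $t>2$; since $\bar\Phi$ is strictly decreasing, $y_t\le z\iff\bar\Phi(z)\le 1/t$ and $y_t\ge z\iff\bar\Phi(z)\ge 1/t$. For the upper bound take $z=\sqrt{2\log t-\log\log t}$ (positive because $2\log x>\log x$ for $x=\log t>0$), so $e^{-z^2/2}=\sqrt{\log t}/t$; applying the just-proved upper bound with $\sqrt{z^2+8/\pi}\ge z$ gives $\bar\Phi(z)<\phi(z)/z\le (t\sqrt{2\pi})^{-1}<1/t$ since $z\ge\sqrt{\log t}$, hence $y_t<z$. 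For the lower bound, if $2\log t-\log\log t-C\le 0$ the claim is vacuous; otherwise set $z'=\sqrt{2\log t-\log\log t-C}>0$ with $C=2\log 4+\log\pi=\log(16\pi)$, which is calibrated so that $e^{C/2}=4\sqrt\pi$. Then $2\phi(z')=\sqrt{2/\pi}\,e^{C/2}\sqrt{\log t}/t=\sqrt{2/\pi}\cdot 4\sqrt\pi\cdot\sqrt{\log t}/t$, and bounding $z'+\sqrt{z'^2+4}\le 2\sqrt{z'^2+4}\le 2\sqrt{2\log t}$ (valid exactly in the regime where $z'^2>0$, since there $t$ is already large enough that $4-\log\log t-C\le 0$), the just-proved lower bound yields $\bar\Phi(z')>\sqrt{2/\pi}\cdot\tfrac{4\sqrt\pi}{2\sqrt2}\cdot\tfrac1t=\tfrac2t>\tfrac1t$, hence $y_t>z'$.

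The differentiations and the arithmetic are routine; the genuinely delicate step is the upper bound in the first display, where a naive "monotone plus vanishing limit" argument fails because the natural auxiliary function $h$ is not monotone — one must notice that $8/\pi$ is exactly the constant making $h$ vanish at the origin as well, and that $h'$ changes sign only once, so that $h$ is unimodal and therefore nonnegative. The parallel bookkeeping that pins down $C=\log(16\pi)$ as the right constant in the converse, so that the final estimate closes with a clean factor $2/t$, is the other place where the specific numbers must be tracked carefully.
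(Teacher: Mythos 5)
Your argument is correct, and it is worth noting that the paper itself states Lemma \ref{lemtail} without proof (it is a standard refinement of the Gaussian tail bound), so there is no internal proof to compare against; your write-up would serve as a valid self-contained justification. The calculus for the two-sided bound checks out: with $g(y)=y+\sqrt{y^2+4}$ and $\tilde g(y)=y+\sqrt{y^2+8/\pi}$ the identities $g^2-2yg=4$ and $\tilde g^2-2y\tilde g=8/\pi$ do collapse the derivatives as you state, the lower bound follows from strict monotonicity plus the vanishing limit, and the upper bound from the unimodality of $h$ together with $h(0)=0$ (the choice $8/\pi$ making $2\phi(0)/\tilde g(0)=1/2$) and $h(\infty)=0$. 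The inversion step is also sound, and your constant bookkeeping ($e^{C/2}=4\sqrt\pi$, ending with $\bar\Phi(z')>2/t$) is right. Two small points should be tightened. First, the positivity of $2\log t-\log\log t$ should be justified by $2u>\log u$ for $u=\log t>0$, not by the garbled ``$2\log x>\log x$ for $x=\log t$.'' Second, the parenthetical claim that $z'^2>0$ already forces $4-\log\log t-C\le 0$ is true but not automatic: since $u\mapsto 2u-\log u$ is increasing for $u>1/2$ and at $u=e^{4-C}\approx 1.09$ one has $2u-\log u\approx 2.09<C\approx 3.92$, the condition $2\log t-\log\log t>C$ indeed implies $\log t>e^{4-C}$, i.e.\ $z'^2+4\le 2\log t$; this one-line monotonicity check needs to be recorded, otherwise the bound $z'+\sqrt{z'^2+4}\le 2\sqrt{2\log t}$ is unsupported for moderate $t$.
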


\bibliographystyle{ims}
\bibliography{refer}

\end{document}